\renewcommand*{\backref}[1]{}
\renewcommand*{\backrefalt}[4]{%
    \ifcase #1 (Not cited.)%
    \or        (Cited on page~#2.)%
    \else      (Cited on pages~#2.)%
    \fi}
\def \N {\mathbb{N}}
\def \R {\mathbb{R}}
\renewcommand{\epsilon}{\varepsilon}
\theoremstyle{definition}
\newtheorem{definition}{Definition}[section]
\theoremstyle{plain}
\newtheorem{theorem}[definition]{Theorem}
\newtheorem{proposition}[definition]{Proposition}
\newtheorem{lemma}[definition]{Lemma}
\newtheorem{corollary}[definition]{Corollary}
\renewcommand{\ge}{\geqslant}
\renewcommand{\le}{\leqslant}
\numberwithin{equation}{section}
\begin{document}

\begin{abstract}
We prove that the trace of nonlocal minimal graphs
at points of stickiness is of class~$C^{1,\gamma}$.

As a result, we show that boundary continuity implies
boundary differentiability for nonlocal minimal graphs.
\end{abstract}
 
\title[Regularity of the trace of nonlocal minimal graphs]{Regularity of the trace of nonlocal minimal graphs}

\author[S. Dipierro]{Serena Dipierro}
\address{S. D., 
Department of Mathematics and Statistics,
University of Western Australia,
35~Stirling Highway, WA 6009 Crawley, Australia. }

\email{serena.dipierro@uwa.edu.au}

\author[O. Savin]{Ovidiu Savin}
\address{O. S., Department of Mathematics,
Columbia University,
2990 Broadway, NY 10027 New York, US. }

\email{savin@math.columbia.edu}

\author[E. Valdinoci]{Enrico Valdinoci}
\address{E. V., 
Department of Mathematics and Statistics,
University of Western Australia,
35~Stirling Highway, WA 6009 Crawley, Australia. }

\email{enrico.valdinoci@uwa.edu.au}

\thanks{SD is supported by the
Australian Future Fellowship FT230100333. OS is supported by the NSF Grant DMS-2349794.
EV is supported by the Australian Laureate Fellowship FL190100081.}

\subjclass[2020]{35R11, 35J93, 53A10.}

\keywords{Nonlocal minimal surfaces, boundary regularity, stickiness.}

\maketitle

\tableofcontents

\section{Introduction}
Nonlocal minimal surfaces are the fractional counterpart of the classical minimizers of the perimeter functional. A special subclass is given by nonlocal minimal graphs, namely nonlocal minimal surfaces that possess a graphical structure.

Because of the presence of long-range interactions, nonlocal minimal graphs behave quite differently from their classical analogues, most notably through the typical appearance of boundary discontinuities. Nevertheless, nonlocal minimal graphs remain continuous from the interior up to the boundary of the reference domain, and therefore admit a {\em boundary trace}. The fundamental properties of this trace are, at present, largely unknown,
also due to the complexities created by the coexistence of horizontal and vertical
normal directions.

This paper is concerned with the regularity of this boundary trace. Our main result establishes~$C^{1,\gamma}$-regularity of the trace at points of boundary discontinuity. As a byproduct, we also obtain that nonlocal minimal graphs are differentiable
(of an order higher than that dictated by the underlying geometric operator)
at every point of boundary continuity.

\begin{figure}[htbp]
        \centering
         \includegraphics[width=0.55\linewidth]{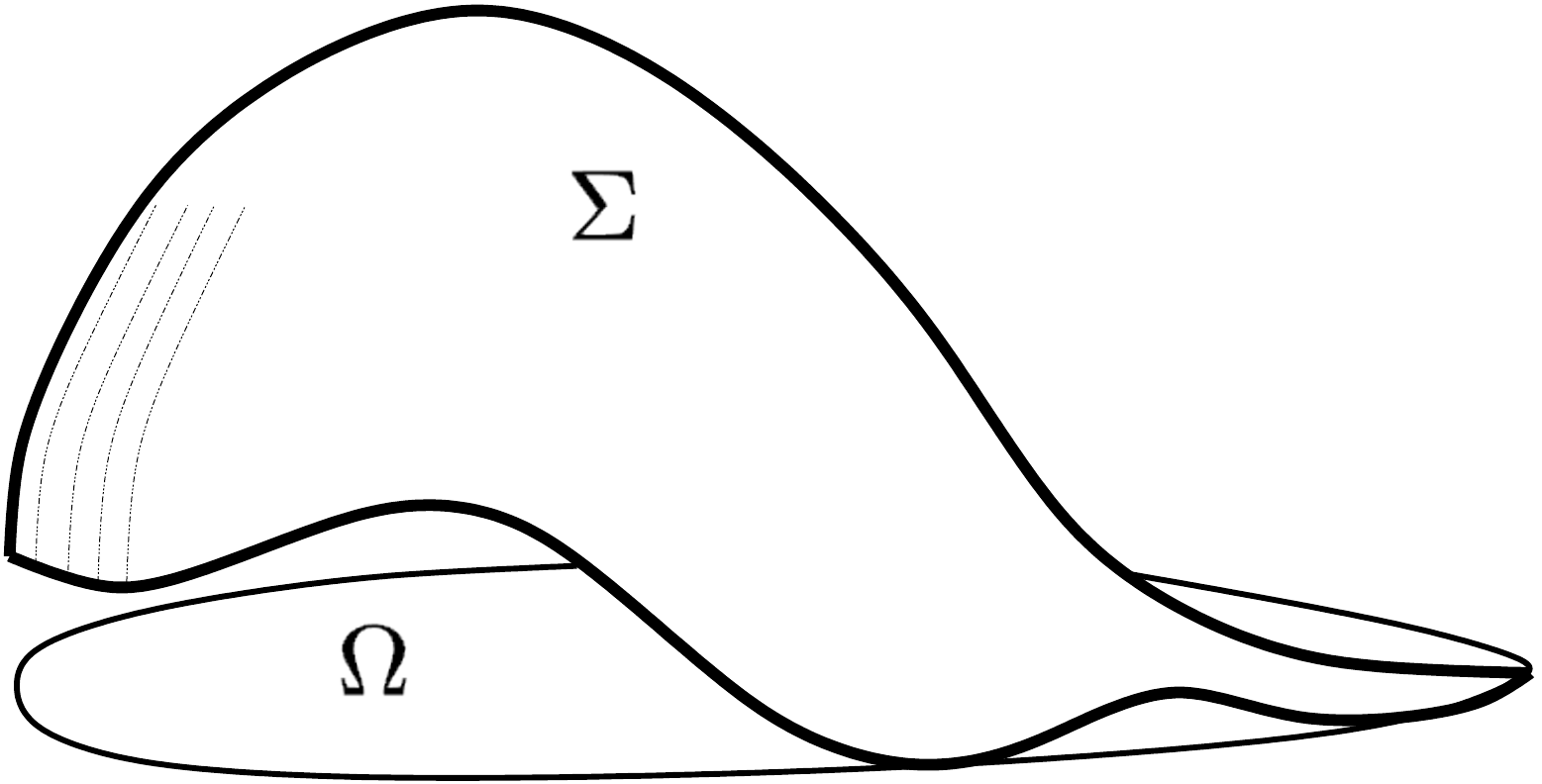}
        \caption{\footnotesize\sl Sketch illustrating a nonlocal minimal graph ``rising'' due to far-away data.
The figures in this paper serve a purely expository purpose and do not aim to capture the full complexity of nonlocal minimal graphs.}
        \label{f8103pra3g5e7.1ascvfg}
    \end{figure}
    
The precise mathematical framework that we consider is as follows (see Figure~\ref{f8103pra3g5e7.1ascvfg} for a simple diagram).
Throughout this paper, $\Omega$ will denote an open, bounded set of~$\R^n$ of
Lipschitz class
and our objective is to consider minimizers of the $s$-perimeter functional in the cylinder~$
{\mathcal{C}}:=\Omega\times\R$. For this minimization, we will consider an external datum~$E_0$ with a graphical structure. More specifically, we consider a continuous
function~$u_0:\R^n\to\R$ and define~$E_0:=\{x_{n+1}<u_0(x')\}$, where points in~$\R^{n+1}$
are denoted by~$x=(x',x_{n+1})\in\R^n\times\R$.

Given~$s\in(0,1)$ and two disjoint sets~$A$ and~$B$ in~$\R^{n+1}$, we consider the integral interaction $$I_s(A,B):=\iint_{A\times B}\frac{dx\,dy}{|x-y|^{n+1+s}}$$
and, in the wake of~\cite{MR2675483}, we define the $s$-perimeter of a (measurable)
set~$F\subseteq\R^{n+1}$ in~${\mathcal{C}}$ as
$$ {\operatorname{Per}}_s(F,{\mathcal{C}}):=I_s(F\cap{\mathcal{C}},F^c\cap {\mathcal{C}})+I_s(F\cap {\mathcal{C}},F^c\cap {\mathcal{C}}^c)+I_s(F\cap{\mathcal{C}}^c,F^c\cap {\mathcal{C}}),
$$
where the superscript~$c$ denotes the complementary set in~$\R^{n+1}$.

We let~$E$ be a minimizer for the $s$-perimeter in~${\mathcal{C}}$ with respect to
the external datum~$E_0$: more precisely, we suppose that~$E\cap{\mathcal{C}}^c=E_0\cap{\mathcal{C}}^c$ and, for every bounded Lipschitz
set~$U\subseteq{\mathcal{C}}$ and every~$F\subseteq\R^{n+1}$ with~$F\cap U^c=
E\cap U^c$, it holds that~${\operatorname{Per}}_s(E,U)\le {\operatorname{Per}}_s(F,U)$.
See~\cite{MR3827804} for more details about this notion of minimization.

It is known that~$E$ has also a graphical structure
(see~\cite{MR3516886}), namely~$E=\{x_{n+1}<u_E(x')\}$, for some~$u_E:\R^n\to\R$
with~$u_E=u_0$ in~$\R^n\setminus\Omega$. The function~$u_E$ is also known to be
uniformly continuous in~$\Omega$ (see~\cite{MR3516886})
and~$C^\infty(\Omega)$ (see~\cite[Theorem~1.1]{MR3934589}). 
In particular, the uniform continuity of~$u_E$ allows us
to look at~$u_E(\partial\Omega)$, which can be considered as the ``trace''
of the nonlocal minimal surface~$\Sigma:=\partial E$ along~$\partial{\mathcal{C}}$
(coming from the interior of~${\mathcal{C}}$).

However,
boundary jumps for~$u_E$ can occur along~$\partial\Omega$, where the derivatives
of~$u_E$ may also blow up: this phenomenon is called ``stickiness''
and it is peculiar of nonlocal minimal surfaces, see~\cite{MR3596708}
(in fact, the stickiness phenomenon is somewhat ``generic'' for 
nonlocal minimal graphs, see~\cite{MR4104542}).

Though~$u_E$ experiences jumps and derivative blow up patterns
around the boundary of~${\mathcal{C}}$, it is known that
\begin{equation}\label{REGSI}
{\mbox{the hypersurface~$\Sigma$ is of class~$C^{1,\frac{1+s}2}$ in~$\overline{\mathcal{C}}$,}}\end{equation} see~\cite[Theorem~1.1]{MR3532394}.
In this spirit, the external unit normal~$\nu$ to~$\Sigma$
is well-defined and continuous not only on~$\Sigma\cap{\mathcal{C}}$
(where it can be written as~$\frac{(-\nabla u_E,1)}{\sqrt{|\nabla u_E|^2+1}}$)
but also on~$\Sigma\cap\partial{\mathcal{C}}$
(where vertical tangents and stickiness phenomena
may produce cases in which~$\nu_{n+1}=0$).

We point out that the external normal is actually well-defined
on the whole of~$\partial^*E$ (the reduced boundary of
the set~$E$, which, by construction, is of locally finite perimeter).
This is useful, since it allows one to write a suitable nonlocal equation of geometric flavor
for~$\nu$ (as we will discuss in Section~\ref{qsojdwco34oiy0h6pym2r3p4gtgS}
below). For this, we 
denote by~${\mathcal{H}}^n$ the $n$-dimensional Hausdorff measure and
suppose that
\begin{equation}\label{ALINF} \int_{\partial^*E_0\cap{\mathcal{C}}^c}\frac{d{\mathcal{H}}^n_y}{(1+|y|)^{n+1+s}}<+\infty.\end{equation}

In this setting, the main result of this paper goes as follows:

\begin{theorem}[$C^{1,\gamma}$-regularity of the
trace of nonlocal minimal graphs at points of stickiness]\label{THM1}
Let~$x_0\in\partial\Omega$ and~$X_0:=(x_0,u(x_0))$. Suppose that~$\Omega$
is of class~$C^{2,1}$ in a neighborhood of~$x_0$ and
$$ \lim_{\Omega\ni x'\to x_0} u_E(x')\ne\lim_{\R^n\setminus\Omega\ni x'\to x_0} u_0(x').$$

Then, there exists~$\rho>0$ such that~$u_E(\partial\Omega)\cap B_\rho(X_0)$
is an $(n-1)$-dimensional surface of class $C^{1,\gamma}$
in~$\R^{n+1}$.
\end{theorem}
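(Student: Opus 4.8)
The plan is to identify the trace $u_E(\partial\Omega)$, near $X_0$, with the free boundary of a (perturbed) fractional obstacle problem of order $1+s$, and then to quote the regularity theory for such free boundaries. First I would fix the local geometry of $\Sigma:=\partial E$ near $X_0$. Set $a:=\lim_{\Omega\ni x'\to x_0}u_E(x')$ and $b:=\lim_{\R^n\setminus\Omega\ni x'\to x_0}u_0(x')$; by hypothesis $a\ne b$, and up to replacing $E$ by the reflection of its complement across $\{x_{n+1}=0\}$ (which replaces $u_E,u_0$ by $-u_E,-u_0$ and preserves all hypotheses) I may assume $a>b$, so that $X_0=(x_0,a)$ is the top endpoint of the vertical segment $\{x_0\}\times[b,a]\subseteq\Sigma\cap\partial\mathcal{C}$. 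Near $X_0$ the set $\Sigma$ consists of three pieces --- the graph of $u_E$ over $\Omega$ (lying in $\mathcal{C}$, where $\Sigma$ has vanishing fractional mean curvature, since $E$ minimizes $\operatorname{Per}_s(\cdot,\mathcal{C})$), the ``wall'' portion $\Sigma\cap(\partial\Omega\times\R)$, and the graph of $u_0$ over $\R^n\setminus\Omega$ --- and the uniform continuity of $u_E$ on $\overline\Omega$ together with the continuity of $u_0$ confines the last piece to heights close to $b<a$, hence away from $X_0$; thus $\Sigma\cap B_\rho(X_0)\subseteq\overline{\mathcal{C}}$ is made of the first two pieces only, for $\rho$ small. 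By~\eqref{REGSI}, $\nu$ is continuous on $\Sigma$ up to $\partial\mathcal{C}$; approaching $X_0$ along the wall portion --- which is nonempty near $X_0$, as $a>b$ and the traces are continuous, and is vertical --- forces $\nu(X_0)$ to be horizontal, $\nu(X_0)=(\nu_\Omega(x_0),0)$ with $\nu_\Omega$ the outer unit normal to $\partial\Omega$ (equivalently, $|\nabla u_E|\to+\infty$ at $x_0$, in keeping with stickiness). Flattening $\partial\Omega$ near $x_0$ by a $C^{2,1}$-diffeomorphism and writing points as $(x'',x_n,x_{n+1})$ with $\Omega=\{x_n<0\}$ locally, \eqref{REGSI} and the above give
$$\Sigma\cap B_\rho(X_0)=\{x_n=g(y)\},\qquad y:=(x'',x_{n+1}),$$
with $g\in C^{1,\frac{1+s}{2}}$, $g\le 0$, $g(0'',a)=0$, $\nabla g(0'',a)=0$, where $\{g<0\}$ is the graph of $u_E$ over $\Omega$ and the contact set $\{g=0\}$ is the wall portion, which coincides locally with the subgraph $\{x_{n+1}\le\tau(x'')\}$ of the continuous function $\tau(x''):=\lim_{\Omega\ni x'\to(x'',0)}u_E(x')$. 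In particular $u_E(\partial\Omega)$ near $X_0$ is exactly the graph of $\tau$, i.e. the topological boundary $\partial\{g<0\}$, and both $\{g=0\}$ and $\{g<0\}$ have positive Lebesgue density at $(0'',a)$.

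Next I would cast the Euler--Lagrange information as an obstacle problem. On $\{g<0\}$ the graph $\{x_n=g\}$ has vanishing fractional mean curvature; expanding the nonlocal mean-curvature operator of a graph around the trivial solution $g\equiv0$ and using the smallness of $g$ in $C^{1,\frac{1+s}{2}}$ yields, in the $y$-variable,
$$c_{n,s}\,(-\Delta)^{\frac{1+s}{2}}g=f\quad\text{in }\{g<0\},$$
with $c_{n,s}>0$, where $f$ collects the super-quadratic remainder in $g$, the curvature correction of $\partial\Omega$ (bounded, indeed Lipschitz, since $\partial\Omega\in C^{2,1}$), and the long-range contribution of the frozen exterior set $\partial^*E_0\cap\mathcal{C}^c$ (finite and smooth in $y$, by~\eqref{ALINF}); a routine analysis gives $f\in C^\alpha$ near $(0'',a)$ for some $\alpha>0$. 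On the contact set $\{g=0\}$, where $\Sigma$ is constrained to $\overline{\mathcal{C}}$, the first variation of $\operatorname{Per}_s(\cdot,\mathcal{C})$ under perturbations that push $\Sigma$ into $\mathcal{C}$ only (keeping $E$ fixed in $\mathcal{C}^c$) fixes the sign of $(-\Delta)^{\frac{1+s}{2}}g$ there, and one deduces that $h:=-g\ge0$ solves the fractional obstacle problem
$$h\ge0,\qquad (-\Delta)^{\frac{1+s}{2}}h\le\tilde f,\qquad (-\Delta)^{\frac{1+s}{2}}h=\tilde f\ \text{ in }\{h>0\},$$
with $\tilde f\in C^\alpha$. (Equivalently, one may invoke the nonlocal geometric equation for $\nu$ from Section~\ref{qsojdwco34oiy0h6pym2r3p4gtgS}: the component $\nu\cdot e_{n+1}$ is nonnegative on $\Sigma$, vanishes exactly on the wall portion, and on the graph portion solves a linear nonlocal Jacobi equation, hence an obstacle problem for an operator of order $1+s$ that is a variable-coefficient, lower-order perturbation of $(-\Delta)^{\frac{1+s}{2}}$; its free boundary is again $\partial\{g<0\}=u_E(\partial\Omega)$.)

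Finally I would invoke the regularity theory for the obstacle problem for operators of order $1+s\in(1,2)$ with H\"older right-hand side (and lower-order perturbations). Since $h\in C^{1,\frac{1+s}{2}}$ by~\eqref{REGSI} and, by the first step, the contact set $\{h=0\}$ is trapped near $X_0$ between the graphs of two continuous functions through $(0'',a)$ --- so it has positive density there --- the point $X_0$ is a \emph{regular} free boundary point (its blow-up is a half-space), near which the free boundary is of class $C^{1,\gamma}$ for some $\gamma=\gamma(n,s)>0$. Undoing the $C^{2,1}$-flattening of $\partial\Omega$ (which preserves $C^{1,\gamma}$-regularity for $\gamma<1$), one concludes that $u_E(\partial\Omega)\cap B_\rho(X_0)=\partial\{g<0\}$ is an $(n-1)$-dimensional surface of class $C^{1,\gamma}$ in $\R^{n+1}$, as desired.

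The main obstacle is the second step: proving rigorously that $g$ (equivalently, $\nu\cdot e_{n+1}$) solves a bona fide fractional obstacle problem. This requires a careful expansion of a nonlinear, genuinely global nonlocal operator acting on a merely $C^{1,\frac{1+s}{2}}$ graph --- isolating the long-range contribution of the fixed exterior set, where~\eqref{ALINF} enters, and showing the remainder is of lower order; a correct accounting of the sign of the operator on the contact set from the one-sided first variation of $\operatorname{Per}_s$; and a check that these perturbations fall within the scope of the free boundary regularity theory and that the regular-point property at $X_0$ comes with estimates uniform enough to produce a definite radius $\rho$.
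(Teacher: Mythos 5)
Your proposal takes a genuinely different route from the paper. The paper works directly on the hypersurface~$\Sigma$ with the geometric equation~\eqref{EQ:NORM} for the normal, localizes it via Lemma~\ref{CUTLE}, and then proves two new geometric boundary Harnack inequalities (Lemmas~\ref{LE:31} and~\ref{LEM5.1}) to control and then iterate the oscillation of the ratio~$\frac{\tau\cdot\nu}{\nu_{n+1}}$; the conclusion is reached via Lemma~\ref{L2sw2I2C32rA}. You instead propose to flatten, write~$\Sigma$ locally as a graph~$\{x_n=g(y)\}$, cast the constrained minimization as a fractional obstacle problem of order~$1+s$, and quote free-boundary regularity near ``regular'' points.

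There are, however, two genuine gaps that prevent the obstacle-problem route from closing. The first is the claim that~$X_0$ is a \emph{regular} free-boundary point. You justify this by observing that both~$\{g=0\}$ and~$\{g<0\}$ have positive density at~$(0'',a)$ because the trace is continuous. But for obstacle problems of order in~$(1,2)$ (and indeed already for the thin obstacle problem), positive density of the contact set at a free-boundary point is \emph{not} a substitute for the regular-point condition; the standard characterizations are in terms of a minimal detachment rate~$\sim d^{(3+s)/2}$ or a minimal Almgren frequency, and degenerate points with positive-density contact set are not excluded by density alone. Ruling out degeneracy at~$X_0$ is, in effect, equivalent to the Lipschitz control on the trace --- precisely the content that the paper establishes from scratch via Lemma~\ref{LE:31} and states is new (``even the Lipschitz regularity\dots is new''). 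So the regular-point hypothesis is not a free input here; it is the theorem.

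The second gap is the linearization. The nonlocal mean-curvature operator on the graph~$\{x_n=g(y)\}$ near~$X_0$ is not a lower-order perturbation of~$c_{n,s}(-\Delta)^{(1+s)/2}$ with a~$C^\alpha$ right-hand side in the sense required by the free-boundary regularity theory you invoke. The operator is genuinely global: the far-field contribution includes the entire interior graph of~$u_E$, which is in no sense close to the flat plane~$\{x_n=0\}$, and the exterior datum, which is only controlled by~\eqref{ALINF}. These produce a right-hand side that depends nonlocally on~$g$ itself (through the full surface), not merely a fixed forcing term. The paper's Lemma~\ref{CUTLE} isolates exactly this issue: the localized operator satisfies an equation with bounded, but not obviously H\"older or sign-controlled, error terms~$a$ and~$f$, and the paper works with non-symmetric kernels and inhomogeneous right-hand sides precisely because the symmetric, translation-invariant obstacle framework of~\cite{MR3648978, MR4023466} does not apply directly (cf.\ the discussion of~\cite{MR4023466} in the Introduction). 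Your alternative reformulation in terms of~$\nu_{n+1}$ solving ``an obstacle problem for an operator of order~$1+s$'' runs into the same difficulty: the linearized Jacobi operator on~$\Sigma$ has a nonsymmetric, variable kernel and a non-homogeneous source, which is exactly the obstruction the paper's new boundary Harnack inequalities are designed to overcome.

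In short: the obstacle-problem framing is a correct heuristic (and indeed is the one used in~\cite{MR3532394} to obtain~\eqref{REGSI}), but to get free-boundary regularity one would still need to (i)~prove the Lipschitz/regular-point step and (ii)~develop a boundary Harnack/improvement-of-flatness theory adapted to non-symmetric kernels with geometric error terms. Those two steps are exactly what Sections~\ref{SECLP} and~\ref{LAKPMOD-iq70jwohg0495ti} supply, and the shortcut via off-the-shelf obstacle regularity does not go through.
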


The statement in Theorem~\ref{THM1} is the first regularity result for the boundary trace of 
nonlocal minimal surfaces, and even the Lipschitz regularity (which is
a subcase of Theorem~\ref{THM1}) is new. 

The methodology introduced to prove Theorem~\ref{THM1} will allow us
to obtain the differentiability of nonlocal minimal graphs at points
of boundary continuity, according to the following result:

\begin{theorem}[Differentiability of nonlocal minimal graphs at points of boundary continuity]
\label{2swa1l2e4oj2dn2fl3er4t5e4n2v2}
Let~$B'_\rho$ be the ball of radius~$\rho$ in~$\R^n$, centered at the origin.

Assume that~$\Omega\cap B_1'=\{x_n>0\}\cap B_1'$, that~$u_0=0$ in~$\{x_n<0\}\cap B_1'$,
and that
\begin{equation}\label{2swa1l2e4oj2dn2fl3er4t5e4n2v}
\lim_{\Omega\ni x'\to 0} u_E(x')=0.
\end{equation}

Then, there exist~$C>0$ and~$r\in\left(0,\frac12\right)$ such that, for all~$x'\in B_r'$,
\begin{equation}\label{2swa1l2e4oj2dn2fl3er4t5e4n2v1}
|u_E(x')|\le C|x'|^{\frac{3+s}2}.
\end{equation}
\end{theorem}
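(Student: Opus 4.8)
The plan is to treat Theorem~\ref{2swa1l2e4oj2dn2fl3er4t5e4n2v2} as the "non-sticky" companion of Theorem~\ref{THM1}, proved by the same Harnack-type iteration but now anchored at a point where the trace matches the exterior datum. The starting geometric fact is~\eqref{REGSI}: the surface $\Sigma=\partial E$ is $C^{1,(1+s)/2}$ up to $\partial\mathcal C$, so near $X_0=0$ the normal $\nu$ is continuous and, because of the continuity hypothesis~\eqref{2swa1l2e4oj2dn2fl3er4t5e4n2v}, at the origin $\Sigma$ is tangent to the vertical hyperplane $\{x_n=0\}$ that separates $\Omega$ from its complement. (Here I would use that $u_0\equiv0$ on $\{x_n<0\}\cap B_1'$ together with $u_E\to 0$, so $E$ coincides near $0$ with the half-space-like configuration whose boundary is $\{x_n=0\}$ on the exterior side.) Thus the claim~\eqref{2swa1l2e4oj2dn2fl3er4t5e4n2v1} is an \emph{improvement-of-flatness} statement: $u_E$, which a priori is only Lipschitz-type near $0$ from~\eqref{REGSI}, in fact decays like $|x'|^{(3+s)/2}$, i.e. the graph detaches from the plane $\{x_n=0\}$ to one order better than the generic $C^{1,(1+s)/2}$ rate — one extra half-derivative coming from the fact that the limiting profile is \emph{not} the half-space $\{x_{n+1}<0\}$ but the flat one.

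First I would set up the geometric equation for $\nu$ announced in the excerpt (the "nonlocal equation of geometric flavor" of Section~\ref{qsojdwco34oiy0h6pym2r3p4gtgS}), using hypothesis~\eqref{ALINF} to control the contribution of the far-away data $E_0$; this gives an integro-differential identity satisfied by the components of $\nu$ on $\partial^*E$, in particular on $\Sigma\cap\overline{\mathcal C}$ near $0$. Second, I would rescale: set $u_r(x'):=u_E(rx')/\omega(r)$ where $\omega(r):=\sup_{B_r'}|u_E|$, and study the limits of the rescaled sets $E_r:=(1/r)\cdot(E - \text{graph of its vertical part})$ in the blow-up. The non-stickiness assumption forces the blow-up limit to be a \emph{global minimal surface that is a graph over the half-space $\{x_n>0\}$, vanishing on $\{x_n\le 0\}$, with the extra homogeneity constrained by the normal equation}; by a Liouville/classification argument (using that the only such $s$-minimal cones compatible with the linearized operator are half-hyperplanes, hence the limit profile is, after subtracting the admissible lower-order terms, forced to be more regular), one gets a dichotomy: either $u_r\to 0$ (contradicting the normalization) or the limit is an explicit power that pins the exponent. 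Third, I would convert this compactness statement into the quantitative iteration $\omega(r/2)\le 2^{-(3+s)/2}\,\omega(r)$ for $r$ small, which, summed geometrically, yields~\eqref{2swa1l2e4oj2dn2fl3er4t5e4n2v1} with the stated exponent $(3+s)/2$.

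The heart of the matter — and what I expect to be the main obstacle — is the blow-up classification at a \emph{boundary point where vertical and horizontal normal directions coexist}. Precisely: one must show that any limiting configuration, which lives over the half-space $\{x_n>0\}$, touches $\{x_n=0\}$ continuously, and solves the (translation-invariant, order-$(1+s)$) linearized fractional minimal surface equation, is necessarily a one-dimensional profile in the $x_n$-variable of the explicit homogeneity $(3+s)/2$ — and to rule out intermediate homogeneities between the generic $C^{1,(1+s)/2}$ boundary rate and this improved rate. This requires a careful boundary Harnack / barrier analysis for the relevant linear nonlocal operator on a half-space with a Dirichlet-type condition on $\{x_n\le 0\}$ and a Neumann-type (graphicality) condition encoded by the geometric equation for $\nu$; the mixed nature of the boundary condition, and the need to track the precise exponent rather than merely "some $C^{1,\gamma}$", is where the delicate work lies. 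A secondary technical point is making the subtraction of lower-order terms in the rescaling rigorous so that the normalization by $\omega(r)$ does not accidentally converge to a nontrivial admissible kernel element; this is handled by choosing the right finite-dimensional space of "trivial" solutions to quotient out before passing to the limit, exactly as in the proof of Theorem~\ref{THM1}.
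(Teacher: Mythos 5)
Your overall picture is correct in one structural respect: the paper does reduce Theorem~\ref{2swa1l2e4oj2dn2fl3er4t5e4n2v2} to a blow-up classification, namely the Liouville-type Theorem~\ref{posjwxhe0itvnu945opmbiytv9cmir23Hnq72Gnm2yF} (triviality of positively $1$-homogeneous $s$-minimal graphical cones over a half-space with trivial exterior datum), and then invokes the compactness machinery already set up in the proof of~\cite[Theorem~1.4]{MR4178752} to convert the cone classification into the decay~\eqref{2swa1l2e4oj2dn2fl3er4t5e4n2v1}. You correctly identify this cone classification as the heart of the matter and as the main obstruction to pushing~\cite{MR4178752} beyond $n=2$. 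Up to this point the proposal is on target.

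The gap is in the proof of the cone classification itself, which is where all the new content of Section~\ref{M324r3tgRguGJKHDOJCDL417872152} lies. Your sketch proposes a mixed Dirichlet/Neumann barrier analysis for a linearized fractional operator on a half-space, together with quotienting out a finite-dimensional space of ``trivial'' lower-order solutions before passing to the limit, ``exactly as in the proof of Theorem~\ref{THM1}''. Neither of these mechanisms appears in the paper, and the second does not describe what the proof of Theorem~\ref{THM1} does (that proof runs an interval-narrowing iteration $[a_k,b_k]$ for the ratio $\tau\cdot\nu/\nu_{n+1}$, with no subtraction of kernel elements). What the paper actually does for Theorem~\ref{posjwxhe0itvnu945opmbiytv9cmir23Hnq72Gnm2yF} is a sliding argument on the normal ratio: one sets $\sigma_1:=\sup_{\Sigma\cap S^n\cap\{x_n>0\}}\nu_1/\nu_{n+1}$, takes a maximizing sequence $X_k$, rescales about the nearest points $Y_k$ of the trivial part $\{x_{n+1}=0,\ x_n\le 0\}$, and shows that the limit point $p_\infty$ of the normalized sequence lies in the \emph{open} half-space $\{x_n>0\}$ (claim~\eqref{X2.2ISJOLPXpaP}). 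Once $p_\infty$ is interior, interior regularity plus the maximum principle of~\cite[Lemma~3.8]{MR4178752} forces $\sigma_1\le 0$. The nontrivial dichotomy is ruling out $p_\infty\in\{x_n=0\}$: here the paper combines the structural continuity Lemma~\ref{by601widkpw304tyhjbl} (to conclude that $p_\infty$ would be a boundary discontinuity point of the limit surface, so the boundary regularity of~\cite{MR3532394} kicks in uniformly) with the geometric boundary Harnack inequality of Lemma~\ref{LE:31}, applied to $U:=\sigma_1\nu_{n+1}-\nu_1$ and $V:=\nu_{n+1}$, to deduce $\nu_1/\nu_{n+1}\le\sigma_1-1/C<\sigma_1$ near $p_\infty$, contradicting that $\sigma_1$ is the supremum and is approached at $p_\infty$. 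This two-alternative argument, and in particular the use of the boundary Harnack inequality to produce a \emph{strict} improvement at a putative boundary maximum, is the key idea; it is not contained in your proposal, and without it the ``blow-up classification'' step remains open. I would therefore not regard the proposal as a proof, though it does locate the difficulty in the right place.
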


We remark that assumption~\eqref{2swa1l2e4oj2dn2fl3er4t5e4n2v} says that the origin
is a boundary continuity point for the nonlocal minimal set~$E$. The thesis in~\eqref{2swa1l2e4oj2dn2fl3er4t5e4n2v1} gives that the nonlocal minimal graph~$u_E$
is differentiable, and actually of class~$C^{1,\frac{1+s}2}$, at this continuity point.

This kind of results is intriguing, since it gives that ``boundary continuity implies
boundary differentiablity''. Theorem~\ref{2swa1l2e4oj2dn2fl3er4t5e4n2v2}
was proved in~\cite[Theorem~1.4]{MR4178752} when~$n=2$. As such,
Theorem~\ref{2swa1l2e4oj2dn2fl3er4t5e4n2v2} completes this investigation
in all dimensions, as requested in~\cite[Open Problem~1.7]{MR4178752}.

Also, Theorems~\ref{THM1} and~\ref{2swa1l2e4oj2dn2fl3er4t5e4n2v2} imply that the boundary trace of nonlocal
minimal graphs do not exhibit vertical tangents
along~$\partial{\mathcal{C}}$. This solves a question posed in~\cite[Open Problem~1.6]{MR4178752}.

The proof of Theorem~\ref{THM1} relies on suitable estimates
on the ratio between tangential components of the normal
and the vertical component (for this, the graph property of the minimal set
is helpful, guaranteeing that the vertical component of the normal is positive,
avoiding zero divisors in this ratio).

The argument to prove Theorem~\ref{THM1} can actually be conveniently
split into two separate steps: first (see Section~\ref{SECLP}),
one obtains a uniform bound on the above
ratio (corresponding to a Lipschitz regularity in Theorem~\ref{THM1});
then (see Section~\ref{LAKPMOD-iq70jwohg0495ti}),
one uses this result to obtain an enhanced
estimate on the oscillation of the above ratio
(this entails differentiable regularity
in Theorem~\ref{THM1}, with a H\"older control of the oscillation
leading to the desired~$C^{1,\gamma}$-regularity).

Once Theorem~\ref{THM1} is established, Theorem~\ref{2swa1l2e4oj2dn2fl3er4t5e4n2v2}
follows from the methods introduced in~\cite{MR4178752}
and the boundary Harnack inequalities developed in this paper
(see Section~\ref{M324r3tgRguGJKHDOJCDL417872152}).

Indeed, the crucial ingredient in the bounds on the above normal ratio
consists in two new
geometric boundary Harnack inequalities.
The reason for which boundary Harnack inequalities
play a role in this setting is that,
in the vicinity of a stickiness point, the tangential components
of the normal vanish when the normal becomes horizontal (see Figure~\ref{f8103pra3g5e7})
and the significance of the boundary Harnack inequalities 
is to guarantee that the denominator of this ratio detaches
from zero sufficiently fast, due to the presence of regions in which the
hypersurface is a ``nice graph''.

\begin{figure}[htbp]
        \centering
        \includegraphics[width=0.5\linewidth]{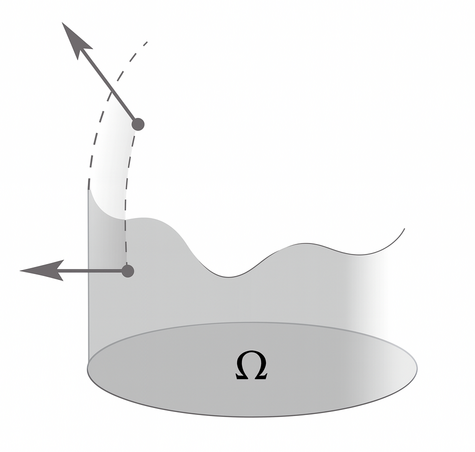}
        \caption{\footnotesize\sl Sketch of the exterior normal of a nonlocal minimal graph in the vicinity
        of a point of stickiness.}
        \label{f8103pra3g5e7}
    \end{figure}

The first geometric boundary Harnack inequality will be presented in Lemma~\ref{LE:31} and
its key feature is to obtain a uniform bound on the ratio
without any assumption on the set where the denominator vanishes.
This corresponds to a Lipschitz regularity for the trace of nonlocal
minimal graphs at points of stickiness, without any initial assumption.

The second geometric boundary Harnack inequality will be presented in Lemma~\ref{LEM5.1}.
This one leverages the result of the first, since it assumes that
the set where the denominator vanishes is already of Lipschitz class
(or, more generally, that one can find positivity sets for the denominator
at any scale with a linear structure). With this additional hypothesis,
the second geometric boundary Harnack inequality provides a H\"older control
of the oscillation of the normal ratio. This corresponds to the $C^{1,\gamma}$-regularity
for the trace of nonlocal
minimal graphs at points of stickiness, relying on the Lipschitz regularity that
was established in the previous step.

In fact, the terminology ``boundary
inequality'' is perhaps too belittling, since we will not need to assume that the solution vanishes at
the point where these inequalities are centered. In this spirit, what we actually establish
are Harnack inequalities in a geometric framework that are uniform up to the boundary
(and thus oscillation estimates for the ratio of the normal that are uniform up to the boundary).

Concerning the existing literature on boundary Harnack inequalities in the nonlocal setting, we recall that the first result of this type was established in~\cite{MR1438304} for the fractional Laplacian in Lipschitz domains. This was later extended to arbitrary open sets in~\cite{MR1719233, MR2365478}, and to symmetric Markov jump processes in~\cite{MR3271268}. The case of nonlocal operators in non-divergence form was first addressed in~\cite{MR3694738}, under suitable homogeneity assumptions on the interaction kernel and assuming that the domain is of class $C^1$. Subsequently, in~\cite{MR4023466}, boundary Harnack inequalities were proved for non-divergence form operators in general domains. The results of~\cite{MR4023466} have played a pivotal role in the study of the regularity of free boundary obstacle problems, see~\cite{MR3648978}.

Among the various proof techniques developed in the literature, the approach in~\cite{MR4023466} is arguably the closest to the one adopted here. Indeed, it is based on a general strategy consisting of two main steps: a pointwise bound in general domains, combined with a H\"older estimate in Lipschitz domains. Nevertheless, several important differences arise between
the framework of~\cite{MR4023466} and that of the present paper. First, the analysis in~\cite{MR4023466} is carried out in the Euclidean setting, whereas here we work on a hypersurface. As a consequence of this geometric context, the interaction kernel considered in~\cite{MR4023466} is symmetric, while the kernel studied here is not necessarily symmetric. Moreover, the framework in~\cite{MR4023466} is tailored to pairs of solutions satisfying the same homogeneous equation, or equations whose right-hand side is modeled on the coefficients of suitable linear combinations of the solutions. In contrast, in the present setting the right-hand side is typically non-homogeneous and must incorporate the specific terms arising from the geometric equations under consideration.

We now dive into the technical details needed to prove the main results of this paper.

\section{Lipschitz regularity of the
trace of nonlocal minimal graphs at points of stickiness}\label{SECLP}

This section contains the first step towards the regularity
theory in Theorem~\ref{THM1}, dealing with regularity in the Lipschitz class.
Though we will later improve the results to obtain $C^{1,\gamma}$-regularity,
this intermediate step is crucial, since it will allow us to deal (later on
in Section~\ref{LAKPMOD-iq70jwohg0495ti}) with equations
that hold in ``sufficiently regular'' domains.

\subsection{A cutoff version of the equation for the normal}\label{qsojdwco34oiy0h6pym2r3p4gtgS}

Hypothesis~\eqref{ALINF} allows one to use~\cite[equation~(4.3) in Proposition~4.1]{MR3934589}
in the whole of~$\R^{n+1}$, finding that, for every~$x\in\Sigma\cap{\mathcal{C}}$,
\begin{equation*}
\int_{\partial^*E}\frac{(\nu(x)-\nu(y))\cdot e}{|x-y|^{n+1+s}}\,d{\mathcal{H}}^n_y=0,
\end{equation*}for all~$e\in\R^{n+1}$
such that~$e\cdot\nu(x)=0$ (as usual, the singular integral above
needs to be interpreted in the
Cauchy principal value sense).

As a result (see~\cite[page~803]{MR3934589}), one obtains that, for every~$x\in\Sigma\cap{\mathcal{C}}$,
\begin{equation}\label{EQ:NORM}
\int_{\partial^*E}\frac{\nu(x)-\nu(y)}{|x-y|^{n+1+s}}\,d{\mathcal{H}}^n_y
=\nu(x)\int_{\partial^*E}\frac{1-\nu(x)\cdot\nu(y)}{|x-y|^{n+1+s}}\,d{\mathcal{H}}^n_y.
\end{equation}

The goal of this section is to provide 
a ``localized'' version of~\eqref{EQ:NORM},
so to separate the contribution coming from the smooth component
of~$\nu$ from the one coming from the geometric measure theory
notion of the normal.

\begin{lemma}\label{CUTLE}
Let~$x_0\in \Sigma$ and~$R>0$. Assume that
\begin{equation}\label{DOKFM}\Sigma\cap B_R(x_0)\subseteq\overline{\mathcal{C}}.\end{equation}

Then, for all~$x\in\Sigma\cap B_{R/2}(x_0)\cap{\mathcal{C}}$,
\begin{equation}\label{poqljf34o53i6ujp6ui-2pro345lty}
\int_{\Sigma\cap B_R(x_0)}\frac{\nu(y)-\nu(x)}{|x-y|^{n+1+s}}\,d{\mathcal{H}}^n_y
+\nu(x)\int_{\Sigma\cap B_R(x_0)}\frac{1-\nu(x)\cdot\nu(y)}{|x-y|^{n+1+s}}\,d{\mathcal{H}}^n_y=a(x)\nu(x)+f(x),
\end{equation}
for suitable~$a:\Sigma\cap B_{R/2}(x_0)\cap{\mathcal{C}}\to\R$ and~$f=(f_1,\dots,f_{n+1}):\Sigma\cap B_{R/2}(x_0)\cap{\mathcal{C}}\to\R^{n+1}$.

In addition, $f_{n+1}\le0$ and
\begin{equation}\label{ALINF-2} |a|+|f|\le C,\end{equation}
for a suitable~$C>0$ depending only on~$n$, $s$, $R$, and~$E_0$.
\end{lemma}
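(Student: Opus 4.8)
The plan is to start from the global equation~\eqref{EQ:NORM}, which holds at every $x\in\Sigma\cap{\mathcal{C}}$, and split the domain of integration $\partial^*E$ into the "local" piece $\Sigma\cap B_R(x_0)$ and the "far" piece $\partial^*E\setminus B_R(x_0)$. Moving the far piece to the right-hand side, we are led to set
$$
f(x):=-\int_{\partial^*E\setminus B_R(x_0)}\frac{\nu(y)-\nu(x)}{|x-y|^{n+1+s}}\,d{\mathcal{H}}^n_y
\qquad\text{and}\qquad
a(x):=-\int_{\partial^*E\setminus B_R(x_0)}\frac{1-\nu(x)\cdot\nu(y)}{|x-y|^{n+1+s}}\,d{\mathcal{H}}^n_y,
$$
so that~\eqref{poqljf34o53i6ujp6ui-2pro345lty} becomes a mere rearrangement of~\eqref{EQ:NORM}. (Note that, thanks to assumption~\eqref{DOKFM}, we may equivalently write $\Sigma\cap B_R(x_0)=\partial^*E\cap B_R(x_0)$ on the local side, since inside $B_R(x_0)$ the reduced boundary and the graph $\Sigma$ agree and lie in $\overline{\mathcal{C}}$.) With these definitions the identity is automatic, so the real content of the lemma is the bound~\eqref{ALINF-2} together with the sign property $f_{n+1}\le 0$.

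For the bound~\eqref{ALINF-2}, the key observation is that for $x\in\Sigma\cap B_{R/2}(x_0)$ and $y\in\partial^*E\setminus B_R(x_0)$ one has $|x-y|\ge R/2$, so the kernel $|x-y|^{-(n+1+s)}$ is no longer singular and we may estimate crudely. Since $\nu$ is a unit vector, $|\nu(y)-\nu(x)|\le 2$ and $0\le 1-\nu(x)\cdot\nu(y)\le 2$ pointwise, and the only issue is integrability at infinity. I would split $\partial^*E\setminus B_R(x_0)$ further into its part inside the cylinder $\mathcal C$ (where $\partial^*E$ is the smooth graph $\Sigma$, so $d{\mathcal H}^n_y=\sqrt{1+|\nabla u_E|^2}\,dx'$ and one controls the measure using $|x-y|\sim|x'-y'|$ over $y'\in\R^n$) and its part outside $\mathcal C$ (which is $\partial^*E_0\cap{\mathcal C}^c$). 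On the inside part, the bound $\int_{\{|z'|\ge R/4\}}|z'|^{-(n+1+s)}\sqrt{1+|\nabla u_E(y')|^2}\,dy'<+\infty$ follows because $\Sigma$ has locally bounded perimeter — more precisely, the density estimates for nonlocal minimal surfaces give $\mathcal H^n(\Sigma\cap B_r(y))\le C r^n$, which together with a dyadic decomposition in $|x-y|$ yields a finite, $R$-dependent constant. On the outside part, the integrability is exactly what hypothesis~\eqref{ALINF} provides (after noting $(1+|y|)\sim|x-y|$ for $|y|$ large and $x$ bounded). Combining these gives $|a(x)|+|f(x)|\le C(n,s,R,E_0)$ uniformly in $x\in\Sigma\cap B_{R/2}(x_0)\cap{\mathcal C}$.

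Finally, for the sign $f_{n+1}\le 0$: the $(n+1)$-th component of $f(x)$ is
$$
f_{n+1}(x)=-\int_{\partial^*E\setminus B_R(x_0)}\frac{\nu_{n+1}(y)-\nu_{n+1}(x)}{|x-y|^{n+1+s}}\,d{\mathcal{H}}^n_y
=\nu_{n+1}(x)\int_{\partial^*E\setminus B_R(x_0)}\frac{d{\mathcal{H}}^n_y}{|x-y|^{n+1+s}}
-\int_{\partial^*E\setminus B_R(x_0)}\frac{\nu_{n+1}(y)}{|x-y|^{n+1+s}}\,d{\mathcal{H}}^n_y,
$$
and here I would use that $E$ is a subgraph, $E=\{x_{n+1}<u_E(x')\}$ with $u_E$ (a priori only upper/lower semicontinuous, but in fact continuous) defined on all of $\R^n$: the vertical component of the (measure-theoretic) outer normal satisfies $\nu_{n+1}\ge 0$ everywhere on $\partial^*E$, since a subgraph points "upward". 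Hence the subtracted integral is nonnegative. For the first term one has to be slightly more careful because of the $\nu_{n+1}(x)$ factor, so the cleaner route is to go back to~\eqref{EQ:NORM} directly: taking the $(n+1)$-th component there and splitting, the left side localized piece equals $a(x)\nu_{n+1}(x)+f_{n+1}(x)$, and since the full integral $\int_{\partial^*E}\frac{\nu_{n+1}(y)-\nu_{n+1}(x)}{|x-y|^{n+1+s}}$ rearranges via~\eqref{EQ:NORM} into $\nu_{n+1}(x)\int_{\partial^*E}\frac{1-\nu(x)\cdot\nu(y)}{|x-y|^{n+1+s}}$, one obtains
$$
f_{n+1}(x)=\nu_{n+1}(x)\!\!\int_{\partial^*E\setminus B_R(x_0)}\!\!\frac{1-\nu(x)\cdot\nu(y)}{|x-y|^{n+1+s}}\,d{\mathcal{H}}^n_y
-\nu_{n+1}(x)\!\!\int_{\Sigma\cap B_R(x_0)}\!\!\frac{1-\nu(x)\cdot\nu(y)}{|x-y|^{n+1+s}}\,d{\mathcal{H}}^n_y+\dots
$$
which is not obviously signed; so in fact the sign of $f_{n+1}$ must come from the more robust argument that the far contribution to the vertical component, being governed by the exterior datum and the global graphicality, can only push $\Sigma$ upward, i.e. $-\int_{\partial^*E\setminus B_R(x_0)}\frac{\nu_{n+1}(y)-\nu_{n+1}(x)}{|x-y|^{n+1+s}}\le 0$ exactly when $\nu_{n+1}(x)$ is offset by the positivity of $\nu_{n+1}(y)$; I expect the rigorous statement is that, decomposing as in the displayed formula for $f_{n+1}$ above, the term $-\int \nu_{n+1}(y)/|x-y|^{n+1+s}\le 0$ dominates after using $\nu_{n+1}(x)\le\nu_{n+1}(y)$-type comparison is \emph{not} available pointwise — rather, one invokes that $\int_{\partial^*E}\nu_{n+1}(y)\varphi\,d{\mathcal H}^n = -\int_E \partial_{n+1}\varphi$ for test functions, i.e. a divergence-theorem identity, applied to $\varphi(y)=|x-y|^{-(n+1+s)}$ cut off away from $B_R(x_0)$, turning the vertical-component integral into an integral of $\partial_{x_{n+1}}|x-y|^{-(n+1+s)}$ over the solid region $E\setminus B_R(x_0)$, whose sign is controlled because $E$ is a subgraph.

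\textbf{Main obstacle.} The routine part is the quantitative bound~\eqref{ALINF-2}: it is just far-field integrability, powered by the density estimates for $\Sigma$ inside $\mathcal C$ and by hypothesis~\eqref{ALINF} outside. The delicate point — and the step I expect to require the most care — is establishing $f_{n+1}\le 0$ with the \emph{correct} definition of $f$ and $a$ (there is genuine freedom in how to split $a(x)\nu(x)+f(x)$, since one can always shift a multiple of $\nu(x)$ between $a$ and $f$). The right choice is presumably the one for which $f_{n+1}$ has a direct interpretation as a (signed) nonlocal mean curvature-type contribution from the exterior, and verifying its sign amounts to a careful use of the subgraph structure of $E$ combined with a divergence-theorem / integration-by-parts identity for the vertical component of $\nu$ against the smooth, compactly-modified kernel on $\R^{n+1}\setminus B_R(x_0)$.
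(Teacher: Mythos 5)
Your plan starts correctly (split $\partial^*E$ into $\Sigma\cap B_R(x_0)$ and the far field, move the far field to the right-hand side, bound crudely using $|x-y|\ge R/2$ plus~\eqref{ALINF} and the interior regularity of $\Sigma$), and your bound~\eqref{ALINF-2} argument is essentially the paper's. But there is a genuine gap in your treatment of the sign $f_{n+1}\le 0$: with your initial choice
$$f(x)=-\int_{\partial^*E\setminus B_R(x_0)}\frac{\nu(y)-\nu(x)}{|x-y|^{n+1+s}}\,d{\mathcal{H}}^n_y,\qquad a(x)=-\int_{\partial^*E\setminus B_R(x_0)}\frac{1-\nu(x)\cdot\nu(y)}{|x-y|^{n+1+s}}\,d{\mathcal{H}}^n_y,$$
the $(n+1)$-th component of $f$ is not signed, and you end up speculating about a divergence-theorem argument that you never close. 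You correctly diagnose that the issue is the freedom to shift a multiple of $\nu(x)$ between $f$ and $a$, but you do not perform the shift.

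The fix is a one-line algebraic cancellation, which is exactly what the paper does. In your $f$, write $\nu(y)-\nu(x)$ as $\nu(y)-\nu(x)$ and observe that the term $+\nu(x)\int_{\rm far}\frac{d{\mathcal{H}}^n_y}{|x-y|^{n+1+s}}$ appearing after expansion is a scalar multiple of $\nu(x)$; move it into $a$. Simultaneously, in $a\nu(x)$ expand $1-\nu(x)\cdot\nu(y)$ and note that the constant term $-\nu(x)\int_{\rm far}\frac{d{\mathcal{H}}^n_y}{|x-y|^{n+1+s}}$ cancels the one you just moved. What remains is
$$f(x)=-\int_{(\partial^*E)\setminus B_R(x_0)}\frac{\nu(y)}{|x-y|^{n+1+s}}\,d{\mathcal{H}}^n_y,\qquad a(x)=\int_{(\partial^*E)\setminus B_R(x_0)}\frac{\nu(x)\cdot\nu(y)}{|x-y|^{n+1+s}}\,d{\mathcal{H}}^n_y.$$
With this decomposition, $f_{n+1}(x)=-\int_{\rm far}\frac{\nu_{n+1}(y)}{|x-y|^{n+1+s}}\,d{\mathcal{H}}^n_y\le 0$ is immediate from the graphical structure ($\nu_{n+1}\ge 0$ on all of $\partial^*E$), and the bound on $|a|+|f|$ follows as you describe, since $|\nu(x)\cdot\nu(y)|\le 1$ and $|\nu(y)|=1$. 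No divergence theorem is needed; the subtlety you flagged is resolved purely by choosing the right representatives.
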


\begin{proof} We observe that~$\partial^* E\cap B_R(x_0)
=\Sigma\cap B_R(x_0)$, thanks to~\eqref{REGSI} and~\eqref{DOKFM}.
Hence,
we employ~\eqref{EQ:NORM} and, after a cancellation, find that
\begin{eqnarray*}&&
\int_{\Sigma\cap B_R(x_0)}\frac{\nu(y)-\nu(x)}{|x-y|^{n+1+s}}\,d{\mathcal{H}}^n_y
+\nu(x)\int_{\Sigma\cap B_R(x_0)}\frac{1-\nu(x)\cdot\nu(y)}{|x-y|^{n+1+s}}\,d{\mathcal{H}}^n_y\\&&\qquad=\int_{(\partial^*E)\setminus B_R(x_0)}\frac{\nu(x)-\nu(y)}{|x-y|^{n+1+s}}\,d{\mathcal{H}}^n_y
-\nu(x)\int_{(\partial^*E)\setminus B_R(x_0)}\frac{1-\nu(x)\cdot\nu(y)}{|x-y|^{n+1+s}}\,d{\mathcal{H}}^n_y\\
&&\qquad=-\int_{(\partial^*E)\setminus B_R(x_0)}\frac{\nu(y)}{|x-y|^{n+1+s}}\,d{\mathcal{H}}^n_y
+\nu(x)\int_{(\partial^*E)\setminus B_R(x_0)}\frac{\nu(x)\cdot\nu(y)}{|x-y|^{n+1+s}}\,d{\mathcal{H}}^n_y.
\end{eqnarray*}
This establishes~\eqref{poqljf34o53i6ujp6ui-2pro345lty} with
\begin{equation}\label{oqjdcmwerlbX2} f(x):=-\int_{(\partial^*E)\setminus B_R(x_0)}\frac{\nu(y)}{|x-y|^{n+1+s}}\,d{\mathcal{H}}^n_y
\end{equation}
and$$
a(x):=\int_{(\partial^*E)\setminus B_R(x_0)}\frac{\nu(x)\cdot\nu(y)}{|x-y|^{n+1+s}}\,d{\mathcal{H}}^n_y.
$$

Notice that~$f_{n+1}\le0$ since~$\nu_{n+1}\ge0$. Furthermore,
for all~$x\in \Sigma\cap B_{R/2}(x_0)$ and~$y\in (\partial^*E)\setminus B_R(x_0)$,
we have that~$|x-y|\ge |y-x_0|-|x-x_0|\ge\frac{|y-x_0|}2$
and so~\eqref{ALINF-2} follows from~\eqref{ALINF}
and the interior regularity in~\cite{MR3934589}.
\end{proof}

It will also be convenient to have a ``commutator estimate'' on the fractional operator
appearing in~\eqref{poqljf34o53i6ujp6ui-2pro345lty}. This is provided by the following result:

\begin{lemma}\label{COMMUTATOR}
Let~$\alpha\in(s,1]$.
Let~${\mathcal{S}}$ be a hypersurface of class~$C^{1,\alpha}$ and the linear operator (in the Cauchy principal value sense)
$$ {\mathcal{L}}f(x):=\int_{{\mathcal{S}}\cap B_1}\frac{f(y)-f(x)}{|x-y|^{n+1+s}}\,d{\mathcal{H}}^n_y.$$

Then,
for all~$\rho>0$, all~$f$, $g\in C^{1,\alpha}( {\mathcal{S}}\cap B_1)$ and all~$x\in{\mathcal{S}}$ such that the geodesic ball of radius~$\rho$ centered at~$x$
is contained in~${\mathcal{S}}\cap B_1$, we have that
$$ |g(x){\mathcal{L}}f(x)-{\mathcal{L}}(fg)(x)|\le
C\,
\|f\|_{C^{0,\alpha}( {\mathcal{S}}\cap B_1)}
\|g\|_{C^{1,\alpha}( {\mathcal{S}}\cap B_1)},$$
for some~$C>0$ depending only on~$n$, $s$, $\alpha$, $\rho$, and~${{\mathcal{S}}}$.
\end{lemma}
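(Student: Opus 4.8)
The plan is to expand the commutator directly from the definition and exploit the cancellation structure. Writing $(fg)(y)-(fg)(x) = g(y)\bigl(f(y)-f(x)\bigr) + f(x)\bigl(g(y)-g(x)\bigr)$, we get
\[
g(x){\mathcal L}f(x) - {\mathcal L}(fg)(x) = \int_{{\mathcal S}\cap B_1}\frac{\bigl(g(x)-g(y)\bigr)\bigl(f(y)-f(x)\bigr)}{|x-y|^{n+1+s}}\,d{\mathcal H}^n_y - f(x)\,{\mathcal L}g(x).
\]
So it suffices to bound the first integral and to bound $|{\mathcal L}g(x)|$, each by $C\|f\|_{C^{0,\alpha}}\|g\|_{C^{1,\alpha}}$. (Note $|f(x)|\le\|f\|_{C^{0,\alpha}}$, so the term $f(x){\mathcal L}g(x)$ is of the desired form once ${\mathcal L}g(x)$ is controlled.)

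For the first integral, I would split the domain of integration into the geodesic ball $B^{\mathcal S}_\rho(x)$ around $x$ and its complement inside ${\mathcal S}\cap B_1$. On the far part $({\mathcal S}\cap B_1)\setminus B^{\mathcal S}_\rho(x)$, the kernel $|x-y|^{-(n+1+s)}$ is bounded (comparably, since ${\mathcal S}$ is $C^{1,\alpha}$, Euclidean and geodesic distances are comparable there), and the numerator is bounded by $2\|g\|_{C^0}\|f\|_{C^0}$; integrating against the finite ${\mathcal H}^n$-measure of ${\mathcal S}\cap B_1$ gives a bound $C\|g\|_{C^0}\|f\|_{C^0}$ with $C$ depending on $\rho$, $n$, $s$, ${\mathcal S}$. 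On the near part, use $|g(x)-g(y)|\le\|g\|_{C^{0,1}}\,d_{\mathcal S}(x,y)\le C\|g\|_{C^{1,\alpha}}|x-y|$ (Lipschitz bound from the $C^1$ part) and $|f(y)-f(x)|\le\|f\|_{C^{0,\alpha}}d_{\mathcal S}(x,y)^\alpha\le C\|f\|_{C^{0,\alpha}}|x-y|^\alpha$, so the integrand is bounded by $C\|g\|_{C^{1,\alpha}}\|f\|_{C^{0,\alpha}}|x-y|^{1+\alpha-(n+1+s)}$; parametrizing ${\mathcal S}$ locally as a $C^{1,\alpha}$ graph over its tangent plane at $x$ and integrating in polar coordinates, the radial integral $\int_0^\rho r^{1+\alpha-s-1}\,dr=\int_0^\rho r^{\alpha-s}\,dr$ converges precisely because $\alpha>s$, yielding the desired bound. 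This is the step where the hypothesis $\alpha\in(s,1]$ is used.

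For $|{\mathcal L}g(x)|$, the argument is the classical one for the boundedness of an order-$s$ operator on $C^{1,\alpha}$ functions, $\alpha>s$: split again at the geodesic ball $B^{\mathcal S}_\rho(x)$. On the far part, bound $|g(y)-g(x)|\le2\|g\|_{C^0}$ and integrate the bounded kernel. On the near part, use the first-order Taylor expansion of $g$ along ${\mathcal S}$ at $x$: writing $g(y)-g(x) = \nabla_{\mathcal S}g(x)\cdot(y-x) + O(\|g\|_{C^{1,\alpha}}|x-y|^{1+\alpha})$, the linear term integrates to a bounded quantity by the oddness/antisymmetry of $(y-x)$ with respect to the (approximate) reflection symmetry of $B^{\mathcal S}_\rho(x)$ about $x$ — the error in this symmetry is itself $O(|x-y|^{1+\alpha})$ since ${\mathcal S}$ is $C^{1,\alpha}$ — and the quadratic-order remainder integrates against $|x-y|^{1+\alpha-(n+1+s)}$, convergent since $\alpha>s$. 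Combining the near and far parts gives $|{\mathcal L}g(x)|\le C\|g\|_{C^{1,\alpha}}$. Summing the three contributions yields the claimed inequality, with $C$ depending only on $n$, $s$, $\alpha$, $\rho$, and (through the graph parametrization and the comparability of distances) on ${\mathcal S}$.

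**Main obstacle.** The routine but slightly delicate point is handling the linear term in the Taylor expansion of $g$ (in the estimate for ${\mathcal L}g(x)$), i.e.\ making precise the near-cancellation of $\int \nabla_{\mathcal S}g(x)\cdot(y-x)\,|x-y|^{-(n+1+s)}\,d{\mathcal H}^n_y$ over a geodesic ball on a merely $C^{1,\alpha}$ hypersurface. One must verify that the defect from exact antisymmetry, coming from the curvature/graph error of ${\mathcal S}$, contributes a kernel decay $|x-y|^{1+\alpha-(n+1+s)}$ that is still integrable thanks to $\alpha>s$. Everything else is bookkeeping with the comparability of Euclidean and geodesic distances on a $C^{1,\alpha}$ surface and with polar-coordinate integration in the local graph chart.
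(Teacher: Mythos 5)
Your decomposition $g(x)\mathcal{L}f(x)-\mathcal{L}(fg)(x)=\int\frac{(g(x)-g(y))(f(y)-f(x))}{|x-y|^{n+1+s}}\,d\mathcal{H}^n_y - f(x)\mathcal{L}g(x)$ is exactly the one used in the paper, and your two bounding steps match as well: the bilinear difference term is controlled by the Hölder--Lipschitz product yielding an integrable kernel $|x-y|^{\alpha-s-n}$ (no need to split the domain here, since this is already integrable over all of $\mathcal{S}\cap B_1$), and $|\mathcal{L}g(x)|\le C\|g\|_{C^{1,\alpha}}$ is obtained, as in the paper, by Taylor expansion in a local $C^{1,\alpha}$ parametrization together with an approximate antipodal symmetrization of the linear term, with the symmetry defect of order $O(|x-y|^{\alpha})$ absorbed thanks to $\alpha>s$. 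This is essentially the paper's proof.
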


\begin{proof} We point out that
\begin{equation}\label{oqjdwlertg3prif9203ijksws0}\begin{split}&
\int_{{\mathcal{S}}\cap B_1}\frac{|f(x)-f(y)|\,|g(x)-g(y)|}{|x-y|^{n+1+s}}\,d{\mathcal{H}}^n_y\\&\qquad\le
\|f\|_{C^{0,\alpha}( {\mathcal{S}}\cap B_1)}\|g\|_{C^{0,1}( {\mathcal{S}}\cap B_1)}
\int_{{\mathcal{S}}\cap B_1}\frac{d{\mathcal{H}}^n_y}{|x-y|^{n+s-\alpha}}\\&\qquad\le C_1\,\|f\|_{C^{0,\alpha}( {\mathcal{S}}\cap B_1)}\|g\|_{C^{0,1}( {\mathcal{S}}\cap B_1)},\end{split}
\end{equation}for some~$C_1>0$ depending only on~$n$, $s$, and~${{\mathcal{S}}}$.

Moreover, if~${{\mathcal{S}}}$ is parameterized by some map~$\Phi:\R^n\to\R^{n+1}$,
using the notation~$\underline g=g\circ\Phi$, $
x=\Phi(\underline x)$, and~$y=\Phi(\underline y)$ we have that
\begin{equation}\label{opqlsjdwfworltgmb2134t5y67u23ed.2}
g(y)-g(x)=\underline g(\underline y)-\underline g(\underline x)=
\nabla\underline g(\underline x)\cdot(\underline y-\underline x)+\xi(\underline x,\underline y),
\end{equation}
with
\begin{equation}\label{opqlsjdwfworltgmb2134t5y67u23ed.3}|\xi(\underline x,\underline y)|\le C_2\,[g]_{C^{1,\alpha}( {\mathcal{S}}\cap B_1)}
|\underline x-\underline y|^{\alpha+1},\end{equation}
for some~$C_2>0$ depending only on~$n$ and~${{\mathcal{S}}}$.

Consequently, if~${\mathcal{S}}\cap B_1=\Phi(U)$ for some~$U\subseteq\R^n$,
denoting by~$J$ the surface density of~${\mathcal{S}}$
(which can be obtained via the
sum of squares of the subdeterminants of the Jacobian of~$\Phi$, see e.g.~\cite[pages~101-102]{MR1158660}), we see that
\begin{equation}\label{OSJ324T0xY56P7OU-PKRFOaosd}\begin{split}&\left|
\int_{{\mathcal{S}}\cap B_1}\frac{g(x)-g(y)}{|x-y|^{n+1+s}}\,d{\mathcal{H}}^n_y\right|=
\left|
\int_{U}\frac{\underline g(\underline y)-\underline g(\underline x)}{|\Phi(\underline x)-\Phi(\underline y)|^{n+1+s}}\,J(\underline y)\,d\underline y\right|\\&\qquad\le
\left|
\int_{U}\frac{\nabla\underline g(\underline x)\cdot(\underline y-\underline x)}{|\Phi(\underline x)-\Phi(\underline y)|^{n+1+s}}\,J(\underline y)\,d\underline y\right|+C_3\,[g]_{C^{1,\alpha}( {\mathcal{S}}\cap B_1)}\\&\qquad\le
\left|
\int_{U}\frac{\nabla\underline g(\underline x)\cdot(\underline y-\underline x)}{|\Phi(\underline x)-\Phi(\underline y)|^{n+1+s}}\,J(\underline x)\,d\underline y\right|+C_4\,[g]_{C^{1,\alpha}( {\mathcal{S}}\cap B_1)},
\end{split}
\end{equation}
for some~$C_3$, $C_4>0$ depending\footnote{For further reference,
we also note that if~$g\in C^{1,1}(B_1)$, one can also
replace here~$[g]_{C^{1,\alpha}( {\mathcal{S}}\cap B_1)}$
with~$[g]_{C^{1,1}( B_1)}$.
This is due to the fact that, in this situation, one can replace~\eqref{opqlsjdwfworltgmb2134t5y67u23ed.2}
and~\eqref{opqlsjdwfworltgmb2134t5y67u23ed.3}
with\label{opqlsjdwfworltgmb2134t5y67u23ed.32ojwfvl}
$$ g(y)-g(x)=
\nabla g(x)\cdot( y- x)+\xi(x, y)=
\nabla g(\Phi(\underline x))\cdot( \Phi(\underline y)- \Phi(\underline x)\big)+\xi( x,y)
,$$with $$|\xi(x, y)|\le C_2\,[g]_{C^{1,1}(  B_1)}
|x-y|^2.$$} only on~$n$, $s$, $\alpha$,
and the~$C^{1,\alpha}$-regularity
of~${{\mathcal{S}}}$.

To ease the notation, we now suppose, up to a rigid motion, that~$\underline x=0$. In this way, $\Phi(\underline y)=x+D\Phi(0)\underline y+O(|\underline y|^{\alpha+1})$ and therefore
$$\frac1{|x-\Phi(-\underline y)|^{n+1+s}}=\frac{1+O(
|\underline y|^{\alpha})
}{|x-\Phi(\underline y)|^{n+1+s}}.$$

We also recall that,
by construction, $\{|\underline y|\le \rho/C_5\}\subseteq U$ for some~$C_5>0$ depending only on~$n$ and
the~$C^{1,\alpha}$-regularity
of~${{\mathcal{S}}}$.
As a result,
\begin{eqnarray*}&& 2\int_{\{|\underline y|\le \rho/C_5\}}\frac{ \underline y}{|x-\Phi(\underline y)|^{n+1+s}}\,d\underline y\\&&\qquad
=\int_{\{|\underline y|\le \rho/C_5\}}\frac{ \underline y}{|x-\Phi(\underline y)|^{n+1+s}}\,d\underline y
-\int_{\{|\underline y|\le \rho/C_5\}}\frac{ \underline y}{|x-\Phi(-\underline y)|^{n+1+s}}\,d\underline y\\
&&\qquad=\int_{\{|\underline y|\le \rho/C_5\}}\frac{ O(|\underline y|^{\alpha+1})
}{|x-\Phi(\underline y)|^{n+1+s}}\,d\underline y\\&&\qquad=O(\rho^{\alpha-s}),
\end{eqnarray*}
yielding that
$$ \left|
\int_{U}\frac{\underline y}{|x-\Phi(\underline y)|^{n+1+s}}\,d\underline y\right|\le
\left|
\int_{U\cap\{|\underline y|> \rho/C_5\}}\frac{\underline y}{|x-\Phi(\underline y)|^{n+1+s}}\,d\underline y\right|+C_6\le C_7,$$
for some~$C_6$, $C_7>0$ depending only on~$n$, $s$, $\alpha$, $\rho$, and
the~$C^{1,\alpha}$-regularity
of~${{\mathcal{S}}}$.

This and~\eqref{OSJ324T0xY56P7OU-PKRFOaosd} give that
\begin{equation}\label{OSJ324T0xY56P7OU-PKRFOaosd:aaihsdjdvl}\begin{split}&\left|
\int_{{\mathcal{S}}\cap B_1}\frac{g(x)-g(y)}{|x-y|^{n+1+s}}\,d{\mathcal{H}}^n_y\right|\le
C_8\,[g]_{C^{1,\alpha}( {\mathcal{S}}\cap B_1)},
\end{split}
\end{equation}
for some~$C_8>0$ depending only on~$n$, $s$, $\alpha$,
$\rho$, and
the~$C^{1,\alpha}$-regularity
of~${{\mathcal{S}}}$.

Now we remark that
\begin{eqnarray*}&&
g(x){\mathcal{L}}f(x)-{\mathcal{L}}(fg)(x)\\&&\qquad
=g(x)\int_{{\mathcal{S}}\cap B_1}\frac{f(y)-f(x)}{|x-y|^{n+1+s}}\,d{\mathcal{H}}^n_y
-\int_{{\mathcal{S}}\cap B_1}\frac{(fg)(y)-(fg)(x)}{|x-y|^{n+1+s}}\,d{\mathcal{H}}^n_y\\
&&\qquad=
\int_{{\mathcal{S}}\cap B_1}\frac{f(y)(g(x)-g(y))}{|x-y|^{n+1+s}}\,d{\mathcal{H}}^n_y\\&&\qquad=
\int_{{\mathcal{S}}\cap B_1}\frac{(f(y)-f(x))(g(x)-g(y))}{|x-y|^{n+1+s}}\,d{\mathcal{H}}^n_y
+f(x)
\int_{{\mathcal{S}}\cap B_1}\frac{g(x)-g(y)}{|x-y|^{n+1+s}}\,d{\mathcal{H}}^n_y.
\end{eqnarray*}
{F}rom this, \eqref{oqjdwlertg3prif9203ijksws0} and~\eqref{OSJ324T0xY56P7OU-PKRFOaosd:aaihsdjdvl},
the desired result follows.
\end{proof}

\subsection{A geometric boundary Harnack inequality}

We provide here a boundary Harnack inequality
of nonlocal type in a geometric setting, that will be used to deduce a Lipschitz control on~$\Sigma$ in the vicinity
of boundary discontinuity points.

\begin{lemma}\label{LE:31}
Let~$\alpha\in(0,1)$, $c_0>0$, and~$C_0$, $C_1\ge0$.

Let~${\mathcal{S}}\subseteq\R^{n+1}$ be a bounded, connected hypersurface of class~$C^{1,\alpha}$
such that~$0\in{\mathcal{S}}$. Let~$
{\mathcal{D}}$ be an open subset of~${\mathcal{S}}$ (in its relative topology).

Consider a kernel~${\mathcal{K}}:{\mathcal{S}}\times{\mathcal{S}}\to[0,+\infty]$
such that
\begin{equation}\label{oqwekfjug54toXypuk} \inf_{|x-y|\le2}{\mathcal{K}}(x,y)\ge c_0\end{equation}
and the linear operator (in the Cauchy principal value sense)
\begin{equation}\label{osjkdwfrjptykkhjxi19iu24ojt45l}
{\mathcal{L}} f(x):=\int_{{\mathcal{S}}\cap B_1}\big( f(y)-f(x)\big)\,{\mathcal{K}}(x,y)\,d{\mathcal{H}}^n_y,\end{equation} 
satisfying, for all~$f\in C^{1,\alpha}({\mathcal{S}}\cap B_1)$,
\begin{equation}\label{oqwekfjug54toXypuk:1a}
|{\mathcal{L}} f|\le
C_1\,\|f\|_{C^{1,\alpha}({\mathcal{S}}\cap B_1)}.
\end{equation}

Let~$u$, $v:{\mathcal{S}}\to\R$, with~$u\ge0$ and~$v\le1$ in~${\mathcal{S}}\cap B_1$.

Assume that\begin{equation}\label{1-SIPDOQ3RL24RTY0XUP.99}
({\mathcal{S}}\setminus
{\mathcal{D}})\cap B_1\subseteq
\{u=0\}\cap\{v=0\}.\end{equation}
Assume also that
\begin{equation}\label{oqw5re32s1ekfju5rfc9iujhb5tyhn90j3g54toXypuk:1a}{\mbox{${\mathcal{L}}u\le \ell u$ and~${\mathcal{L}}v\ge\ell v- C_0$ in~${\mathcal{D}}\cap B_{1/2}$,}}\end{equation} for some~$\ell\in L^\infty({\mathcal{S}}\cap B_1)$.

Suppose in addition that there exist~$\rho_0\in(0,1)$,
$p_0\in {\mathcal{S}}\cap (B_1\setminus B_{\rho_0})$,
$r_0\in\left(0,\frac{1-|p_0|-\rho_0}8\right)$, and~$\delta_0>0$ such that
\begin{equation}\label{1-SIPDOQ3RL24RTY0XUP} \inf_{B_{r_0}(p_0)}u\ge\delta_0.\end{equation}

Then,
there exists~$C>0$, depending only on~$n$, $\alpha$, ${\mathcal{L}}$, the $C^{1,\alpha}$-regularity of~${\mathcal{S}}$, $c_0$, $C_0$, $C_1$, $\|\ell\|_{ L^\infty({\mathcal{S}}\cap B_1)}$,
$\rho_0$, $r_0$,
and~$\delta_0$ (and independend of~$
{\mathcal{D}}$), such that
$$ v\le C u {\mbox{ in }}{\mathcal{S}}\cap B_{\rho_0/2}.$$
\end{lemma}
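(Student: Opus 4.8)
The plan is to reduce the statement to a pointwise inequality and then to prove it by propagating inward an estimate that is already available on the fixed positivity ball $B_{r_0}(p_0)$, the transfer between scales being carried out through the nonlocal kernel rather than through any barrier adapted to $\mathcal D$.

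First I would reduce to showing $v(x_0)\le Cu(x_0)$ for every $x_0\in\mathcal D\cap\mathcal S\cap B_{\rho_0/2}$: for $x_0\in(\mathcal S\setminus\mathcal D)\cap B_1$ the inequality is immediate from \eqref{1-SIPDOQ3RL24RTY0XUP.99}. Note also that $u>0$ everywhere on $\mathcal D\cap B_{1/2}$, for if $u(x)=0$ at such a point then, by \eqref{oqw5re32s1ekfju5rfc9iujhb5tyhn90j3g54toXypuk:1a} and $u\ge0$, $0\ge\mathcal Lu(x)=\int_{\mathcal S\cap B_1}u(y)\,{\mathcal K}(x,y)\,d{\mathcal H}^n_y\ge c_0\delta_0\,{\mathcal H}^n\big(\mathcal S\cap B_{r_0}(p_0)\big)>0$ thanks to \eqref{oqwekfjug54toXypuk} and \eqref{1-SIPDOQ3RL24RTY0XUP}, a contradiction; thus the ratio $v/u$ is at least finite on $\mathcal D\cap B_{1/2}$, and it is its boundedness that must be established. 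Replacing $v$ by $\max\{v,0\}$ --- which still vanishes off $\mathcal D$, still obeys the lower inequality in \eqref{oqw5re32s1ekfju5rfc9iujhb5tyhn90j3g54toXypuk:1a} at every point where it is positive, and only strengthens the claim --- I may also assume $0\le v\le 1$. Since $v\le1$ and $u\ge\delta_0$ on $B_{r_0}(p_0)$, the desired inequality holds there with $C=\delta_0^{-1}$; the task is to carry it into $B_{\rho_0/2}$.

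For a large constant $C$ set $w:=Cu-v$; then $w=0$ on $(\mathcal S\setminus\mathcal D)\cap B_1$, $w\ge-1$ on $\mathcal S\cap B_1$, $w\ge C\delta_0-1$ on $\mathcal S\cap B_{r_0}(p_0)$, and, subtracting the two inequalities in \eqref{oqw5re32s1ekfju5rfc9iujhb5tyhn90j3g54toXypuk:1a}, $\mathcal Lw\le\ell w+C_0$ in $\mathcal D\cap B_{1/2}$. I would establish $v(x_0)\le Cu(x_0)$ by a dyadic iteration based at the point: I claim there are $\theta\in(0,1)$ and $r_1>0$, depending only on the listed quantities, such that for every $x_0\in\mathcal D\cap\mathcal S\cap B_{\rho_0/2}$ and every scale $r=2^{-k}r_1$, \emph{either} $u(x_0)\ge\theta\sup_{\mathcal S\cap B_r(x_0)}v$, in which case $v(x_0)\le\theta^{-1}u(x_0)$ and we are done, \emph{or} $\sup_{\mathcal S\cap B_{r/2}(x_0)}v\le(1-\theta)\sup_{\mathcal S\cap B_r(x_0)}v$; if the second alternative holds at every scale then $v(x_0)\le\sup_{\mathcal S\cap B_{2^{-k}r_1}(x_0)}v\le(1-\theta)^k\to 0$, so $v(x_0)=0$. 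The improvement-of-oscillation step is the heart of the argument, and there I expect to combine: (i) a De Giorgi-type local boundedness estimate for the subsolution $v$ together with a companion measure estimate, valid for the operator class singled out by \eqref{oqwekfjug54toXypuk}--\eqref{oqwekfjug54toXypuk:1a}; (ii) the equations rescaled to unit size, for which the inhomogeneity $C_0$ and the coefficient $\ell$ pick up a positive power of the scale and hence become negligible; (iii) the kernel lower bound \eqref{oqwekfjug54toXypuk}, which is what keeps the denominator of the ratio detached from zero, since from every $x\in\mathcal D\cap B_{1/2}$ the positivity set $B_{r_0}(p_0)$ is seen with kernel weight $\ge c_0$, so its positive contribution to $\mathcal Lu(x)$ can be quantified --- it is through this mechanism, not through a local Harnack chain, that the positivity of $u$ at the far ball is felt at small scales; and (iv) barriers on the curved surface $\mathcal S$, obtained by pulling back flat barriers through a $C^{1,\alpha}$ parametrization and controlling the error by the commutator estimate of Lemma~\ref{COMMUTATOR}, which simultaneously absorbs the curvature of $\mathcal S$, the tails of $\mathcal L$, and the contribution of the ``collar'' $\mathcal S\cap(B_1\setminus B_{1/2})$, on which no equation holds and where $w$ may be as negative as $-1$.

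The main obstacle, and the source of the difficulty, is that $\mathcal D$ --- equivalently the zero set $\{u=0\}\cap\{v=0\}$ --- is completely arbitrary, so no barrier or comparison function can be fitted to it; consequently the positivity input at $B_{r_0}(p_0)$ cannot be moved by a Harnack chain and must be transmitted through the nonlocal kernel at each scale, which is precisely what the uniform bound \eqref{oqwekfjug54toXypuk} is for, and making this transmitted positivity actually outweigh the competing terms at small scales (where, after rescaling, it looks weak) is the delicate point. Compounding this, the kernel is neither symmetric nor regular, so the principal-value part of $\mathcal L$ must be handled via \eqref{oqwekfjug54toXypuk:1a} and Lemma~\ref{COMMUTATOR} rather than by symmetric cancellation; the coefficient $\ell$ carries neither a sign nor any smallness, so it cannot be dominated by the ellipticity of $\mathcal L$ and is instead eliminated by working with the combination $Cu-v$ and by rescaling; and the absence of an equation on the collar $\mathcal S\cap(B_1\setminus B_{1/2})$ forbids a direct appeal to the minimum principle, which is why the whole scheme must be genuinely localized around the point.
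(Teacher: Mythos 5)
Your strategy is genuinely different from the paper's, and as written it has a gap exactly at the point where the content of the lemma lives.

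The paper proves Lemma~\ref{LE:31} by a single sliding--paraboloid comparison, not an iteration. One sets $\eta:=u-\delta v$ for a small parameter $\delta$, supposes for contradiction that $\eta(p)<0$ for some $p\in\mathcal S\cap B_\rho$, and slides the downward paraboloid $\phi_\lambda(x)=-\tfrac{64\delta}{(\rho-\rho_0)^2}|x-p|^2-\lambda$ up until it touches $\eta$ from below at a point $q$; the touching point is forced to lie in $\mathcal D\cap B_\rho$ by \eqref{1-SIPDOQ3RL24RTY0XUP.99} and the quadratic geometry. Evaluating $\mathcal L$ of the nonnegative function $\psi_\lambda:=\eta-\phi_\lambda$ at $q$, the kernel lower bound \eqref{oqwekfjug54toXypuk} on the far ball $B_{r_0}(p_0)$ (where $\psi_\lambda\ge\delta_0-\delta$) gives a lower bound on $\mathcal L\psi_\lambda(q)$ that is bounded away from zero by a constant depending on $\delta_0$, while the equation, $|\eta(q)|\le 2\delta$, and \eqref{oqwekfjug54toXypuk:1a} applied to $\phi_\lambda$ give an upper bound of order $\delta$. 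Taking $\delta$ small yields the contradiction in one stroke. No oscillation decay, no De~Giorgi machinery, no rescaling.

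Your plan, by contrast, hinges on the dyadic dichotomy: at every scale $r=2^{-k}r_1$, either $u(x_0)\ge\theta\sup_{B_r(x_0)}v$ or $\sup_{B_{r/2}(x_0)}v\le(1-\theta)\sup_{B_r(x_0)}v$. This dichotomy \emph{is} the lemma --- and you do not prove it; you list ingredients (a De~Giorgi-type local boundedness/measure estimate for the surface operator, barriers transported through Lemma~\ref{COMMUTATOR}, rescaling to shrink $\ell$ and $C_0$) and assert they combine. None of these are established for the operator $\mathcal L$ on a $C^{1,\alpha}$ hypersurface with a merely measurable, possibly large zero-order coefficient $\ell$ and an arbitrary degeneracy set $\mathcal D$, and developing them is a nontrivial project well beyond what the paper needs. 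In particular, because $\mathcal D$ is arbitrary, it is not clear how ``$u(x_0)$ small'' propagates to ``$u$ small in measure on $B_r(x_0)$'' (the standard ABP/measure estimate step), nor how a drop in $\sup v$ then follows; the paper never needs this because, at the touching point $q$, the positivity of $u$ over $B_{r_0}(p_0)$ enters $\mathcal L\psi_\lambda(q)$ \emph{directly} through the kernel, with no need to relay it scale by scale. In short, the roadmap is reasonable and the guiding intuition (``positivity is transmitted by the nonlocal kernel, not by a Harnack chain'') matches the paper's, but the proof of the key oscillation-decay step is entirely absent, so the proposal does not yet constitute a proof of the lemma. (It may help to note that the paper does use an iterative improvement-of-oscillation scheme, but only for the finer $C^{1,\gamma}$ result in Lemma~\ref{LEM5.1}, and there the Lipschitz information from Lemma~\ref{LE:31} is already in hand; here that information is what one must produce.)
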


\begin{proof} We let~$\delta\in\left(0,\frac{\delta_0}2\right)$, to be chosen conveniently small in what follows, and~$\eta:=u-\delta v$. 
We stress that~$\eta\ge-\delta$ in~${\mathcal{S}}\cap B_1$.
Let also~$\rho\in\left(0,\frac{\rho_0}2\right]$.
Suppose that there exists~$p\in{\mathcal{S}}\cap B_{\rho}$ such that~$\eta(p)<0$
and, given~$\lambda\in\R$, consider the family of functions
$${\mathcal{S}}\ni x\longmapsto\phi_\lambda(x):=-\frac{64\delta}{(\rho-\rho_0)^2} |x-p|^2-\lambda.
$$
Notice that~$\phi_\lambda\le-\lambda<-\delta\le\eta$ in~${\mathcal{S}}\cap B_1$ as long as~$\lambda>\delta$, but~$\phi_\lambda(p)=-\lambda>0>\eta(p)$ as long as~$\lambda<0$.

Hence, we can choose~$\lambda\in[0,\delta]$ such that~$\phi_\lambda\le\eta$ in~${\mathcal{S}}\cap B_1$
and there exists~$q\in{\mathcal{S}}\cap B_1$ such that~$\phi_\lambda(q)=\eta(q)$.

We have that~$q\in{\mathcal{D}}$, otherwise, by~\eqref{1-SIPDOQ3RL24RTY0XUP.99},
one would have that~$u(q)=v(q)=0$, and so~$\eta(q)=0$, yielding that~$\phi_\lambda(q)=0$; but this would give that~$\lambda=0$ and~$q=p$,
against the condition that~$\eta(p)<0$.

We also stress that~$q\in\overline{ B_{(\rho-\rho_0)/8}(p)}\subseteq B_{\rho}$, because
$$ \frac{64\delta}{(\rho-\rho_0)^2} |q-p|^2\le\frac{64\delta}{(\rho-\rho_0)^2} |q-p|^2+\lambda=-\phi_\lambda(q)=-\eta(q)\le\delta.$$
Thus, setting~$\psi_\lambda:=\eta-\phi_\lambda$, we have that~$\psi_\lambda\ge0=\psi_\lambda(q)$
in~${\mathcal{S}}\cap B_1$, whence, recalling~\eqref{oqwekfjug54toXypuk},
\begin{equation}\label{oqwekfjug54toXypuk:1ab}
\begin{split}&
c_0\int_{\mathcal{S}\cap B_{r_0}(p_0)}\psi_\lambda(y)\,d{\mathcal{H}}^n_y\le
\int_{\mathcal{S}\cap B_{r_0}(p_0)}\psi_\lambda(y)\,{\mathcal{K}}(q,y)\,d{\mathcal{H}}^n_y\\&\qquad\le
\int_{{\mathcal{S}}\cap B_1}\psi_\lambda(y)\,{\mathcal{K}}(q,y)\,d{\mathcal{H}}^n_y=
{\mathcal{L}}\psi_\lambda(q)\le \ell(q)\eta(q)+C_0\delta+|{\mathcal{L}}\phi_\lambda(q)|.\end{split}\end{equation}

Now we point out that
\begin{equation}\label{osjqwdf93m7i0un6pu8Li-porkgb}
|\eta(q)|=|\phi_\lambda(q)|\le
\frac{64\delta}{(\rho-\rho_0)^2} |q-p|^2+\lambda\le
\delta+\delta=2\delta.
\end{equation}

Notice also that, in view of~\eqref{oqwekfjug54toXypuk:1a}, for all~$x\in{\mathcal{S}}\cap B_1$,
\begin{equation*}\frac{(\rho-\rho_0)^2\,{|\mathcal{L}}\phi_\lambda(x)|}{64\delta}\le
\int_{{\mathcal{S}}\cap B_1}\big| |x-p|^2-|y-p|^2\big|\,{\mathcal{K}}(x,y)\,d{\mathcal{H}}^n_y\le C_2,
\end{equation*}for some~$C_2$ proportional to~$C_1$
(the proportionality factor depending on~$n$, $\alpha$, and the $C^{1,\alpha}$-regularity of~${\mathcal{S}}$).

This, \eqref{oqwekfjug54toXypuk:1ab}, and~\eqref{osjqwdf93m7i0un6pu8Li-porkgb} lead to
\begin{equation}\label{oqwekfjug54toXypuk:1abc}
\int_{\mathcal{S}\cap B_{r_0}(p_0)}\psi_\lambda(y)\,d{\mathcal{H}}^n_y\le C_3\delta,\end{equation}for some~$C_3>0$.

Now we observe that, for all~$x\in{\mathcal{S}}\cap B_1$,
we have that~$\phi_\lambda(x)\le 0$. Hence, we
recall~\eqref{1-SIPDOQ3RL24RTY0XUP} and we see that
\begin{eqnarray*}&&
\int_{\mathcal{S}\cap B_{r_0}(p_0)}\psi_\lambda(y)\,d{\mathcal{H}}^n_y\ge
\int_{\mathcal{S}\cap B_{r_0}(p_0)}\eta(y)\,d{\mathcal{H}}^n_y\ge
\int_{\mathcal{S}\cap B_{r_0}(p_0)}(\delta_0-\delta)\,d{\mathcal{H}}^n_y\\&&\qquad\ge
\frac{\delta_0}2{\mathcal{H}}^n\big(\mathcal{S}\cap B_{r_0}(p_0)\big)=:C_4.
\end{eqnarray*}
Combining this with~\eqref{oqwekfjug54toXypuk:1abc}, it follows that~$\delta\ge\frac{C_4}{C_3}$.
This condition is violated if we choose~$\delta$ sufficiently small, thus showing that~$u\ge\delta v$ in~${\mathcal{S}}\cap B_{\rho}$ as long as~$\delta$ is small enough. 
\end{proof}

\begin{corollary}\label{ojsndofkw34rtgoCIr7}
Let~$0\in \Sigma\cap\partial{\mathcal{C}}$. Assume that~$\Sigma\cap B_1\subseteq\overline{\mathcal{C}}$.
Let~$\tau\in C^{1,1}(\R^{n+1},\R^{n+1})$ and suppose that~$\tau$ is a unit tangent vector field
along~$\partial{\mathcal{C}}$.

Then, there exists~$C>0$, depending only on~$n$, $s$, $\Sigma$, $E_0$, and~$\|\tau\|_{C^{1,1}(\Sigma\cap B_1)}$, such that
$$ |\tau\cdot\nu|\le C \nu_{n+1} {\mbox{ in }}\Sigma\cap B_{1/2}.$$
\end{corollary}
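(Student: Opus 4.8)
The plan is to derive Corollary~\ref{ojsndofkw34rtgoCIr7} from the geometric boundary Harnack inequality of Lemma~\ref{LE:31}, applied to the pair $u:=\nu_{n+1}$ and $v:=\pm(\tau\cdot\nu)$ on a bounded connected piece $\mathcal{S}$ of the hypersurface $\Sigma$. Because $E$ is a subgraph we have $u\ge0$, and because $\tau$ has unit length $|v|\le1$; moreover $\tau$ and $-\tau$ are both admissible $C^{1,1}$ unit tangent fields, so the two one-sided bounds $\tau\cdot\nu\le Cu$ and $-\tau\cdot\nu\le Cu$ produced by the lemma combine into the claimed $|\tau\cdot\nu|\le C\nu_{n+1}$.

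First I would record the localized equation. Fix $z\in\Sigma\cap\partial{\mathcal C}$ in a small neighborhood of the origin; since $B_r(z)\subseteq B_1$ for every $r\le 1-|z|$, the hypothesis $\Sigma\cap B_1\subseteq\overline{\mathcal C}$ lets me apply Lemma~\ref{CUTLE} centered at $z$ with a radius $R$ comparable to $1$. Writing $\mathcal L g(x):=\int_{\Sigma\cap B_R(z)}\frac{g(y)-g(x)}{|x-y|^{n+1+s}}\,d\mathcal H^n_y$ and $h(x):=\int_{\Sigma\cap B_R(z)}\frac{1-\nu(x)\cdot\nu(y)}{|x-y|^{n+1+s}}\,d\mathcal H^n_y\ge0$, identity~\eqref{poqljf34o53i6ujp6ui-2pro345lty} reads, componentwise,
\[
\mathcal L\nu_j(x)+h(x)\,\nu_j(x)=a(x)\,\nu_j(x)+f_j(x)\qquad(j=1,\dots,n+1),
\]
for $x\in\Sigma\cap B_{R/2}(z)\cap{\mathcal C}$, with $|a|+|f|\le C$ and $f_{n+1}\le0$. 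Taking the $(n+1)$-st component and discarding the non-negative term $h\,\nu_{n+1}$ and the non-positive term $f_{n+1}$ gives $\mathcal Lu\le a\,u$. Contracting the vector identity with $\tau(x)$ and invoking Lemma~\ref{COMMUTATOR} (with $f=\nu_j$, $g=\tau_j$; legitimate since $\tau\in C^{1,1}$ and $\nu$ is smooth on the interior region where we evaluate) to replace $\sum_j\tau_j(x)\mathcal L\nu_j(x)$ by $\mathcal L(\tau\cdot\nu)(x)$ up to a bounded error yields, for $v=\tau\cdot\nu$, a relation of the shape $\mathcal Lv=a\,v-h\,v+O(1)$, hence $\mathcal Lv\ge a\,v-h\,v-C$.

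Next I would verify the remaining hypotheses of Lemma~\ref{LE:31}, after translating $z$ to the origin and performing a bounded dilation turning $B_R(z)$ into $B_1$: the kernel is $\mathcal K(x,y)=|x-y|^{-(n+1+s)}$, which satisfies $\inf_{|x-y|\le2}\mathcal K\ge c_0$, and the bound $|\mathcal L g|\le C_1\|g\|_{C^{1,\alpha}}$ for $\alpha=\tfrac{1+s}2\in(s,1)$ is exactly estimate~\eqref{OSJ324T0xY56P7OU-PKRFOaosd:aaihsdjdvl} from the proof of Lemma~\ref{COMMUTATOR}, using $\Sigma\in C^{1,(1+s)/2}$. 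Taking $\mathcal D:=\Sigma\cap{\mathcal C}\cap B_1$, one has $(\mathcal S\setminus\mathcal D)\cap B_1=\Sigma\cap\partial{\mathcal C}\cap B_1$ (here $\Sigma\cap B_1\subseteq\overline{\mathcal C}$ is used), and on this set the surface, being $C^1$ and contained in $\overline{\mathcal C}$, is forced to be tangent to $\partial{\mathcal C}$ there; hence $\nu=\pm\mathbf n_{\partial{\mathcal C}}$ is horizontal and orthogonal to $\tau$, so $u=v=0$. For the ``far point'' hypothesis one selects a point of $\Sigma\cap{\mathcal C}$ in the relevant ball and at positive distance from its center, where $\nu_{n+1}>0$ and so $\nu_{n+1}\ge\delta_0>0$ on a small ball by continuity; if no such point exists then $\Sigma$ coincides there with the vertical wall $\partial{\mathcal C}$, $u\equiv v\equiv0$, and the claim is trivial. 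Lemma~\ref{LE:31} then gives $v\le Cu$ on a small ball about $z$. Finally, since $\{\nu_{n+1}=0\}\cap\Sigma\cap\overline{B_{1/2}}$ is compact and contained in $\partial{\mathcal C}$, I would cover it by finitely many such small balls, getting $|\tau\cdot\nu|\le C\nu_{n+1}$ on a neighborhood of it; on the complementary compact subset of $\Sigma\cap\overline{B_{1/2}}$ one has $\nu_{n+1}\ge c>0$, so $|\tau\cdot\nu|\le1\le c^{-1}\nu_{n+1}$ trivially, and combining the two regions yields the corollary, with constant depending only on $n$, $s$, $\Sigma$, $E_0$, $\|\tau\|_{C^{1,1}(\Sigma\cap B_1)}$.

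The main obstacle is the $v$-inequality in the precise form required by Lemma~\ref{LE:31}, namely $\mathcal Lv\ge\ell v-C_0$ with $\ell\in L^\infty$. For $u$ this is painless: the zeroth order nonlocal term $h\,u$ (which can blow up, logarithmically, as $x$ approaches the free boundary of the coincidence set, where $\nu$ is only $C^{0,(1+s)/2}$) enters with the favourable sign and is simply dropped, leaving $\ell=a$. For $v=\tau\cdot\nu$, however, $-h\,v$ has an uncontrolled sign on $\{v>0\}$ and cannot be absorbed into a bounded $\ell$; the key observation must be that in the proof of Lemma~\ref{LE:31} this term is only ever evaluated at the single contact point $q$ where $\eta=u-\delta v$ touches its quadratic barrier from below, where $|\eta(q)|\le2\delta$ and $u(q)<\delta v(q)$, so that what one really needs is a bound showing $\delta\,h(q)\,v(q)$ is itself of order $\delta$. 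Obtaining such a controlled absorption of the nonlocal zeroth order term at the contact point — by playing the geometric equation for $u$ against that for $v$ — is where the real work lies, and it is the step I would expect to be the most delicate.
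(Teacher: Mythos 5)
Your proposal follows the paper's route very closely: reduce to Lemma~\ref{LE:31} with $u:=\nu_{n+1}$, $v:=\pm\tau\cdot\nu$, $\mathcal{D}:=\Sigma\cap\mathcal{C}$, use Lemma~\ref{CUTLE} for the localized geometric equation $\mathcal{L}\nu+h\nu=a\nu+f$ (where $h(x):=\int_{\Sigma\cap B_1}\frac{1-\nu(x)\cdot\nu(y)}{|x-y|^{n+1+s}}\,d\mathcal{H}^n_y\ge 0$), use Lemma~\ref{COMMUTATOR} to commute $\tau$ past $\mathcal{L}$, and check the zero-set and far-point hypotheses exactly as the paper does. Your derivation of $\mathcal{L}u\le au$ by discarding $hu\ge0$ and $f_{n+1}\le0$ is the paper's argument verbatim, and ``possibly swapping $\tau$ with $-\tau$'' is also the paper's device.

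Where you part ways with the paper is the $v$-inequality, which you leave unresolved. You arrive, correctly, at $\mathcal{L}(\tau\cdot\nu)\ge a\,(\tau\cdot\nu)-h\,(\tau\cdot\nu)+O(1)$, flag the term $-h\,(\tau\cdot\nu)$ (noting $h$ may blow up near the stickiness set), and then conjecture that the fix must come from working only at the contact point $q$ of Lemma~\ref{LE:31}, exploiting $u(q)<\delta v(q)$ — calling this ``where the real work lies.'' That is not what the paper does: display~\eqref{oqljsmdotgRfTPkswperf2-o3pr4r.02} simply writes $\mathcal{L}(\tau\cdot\nu)\ge a\,(\tau\cdot\nu)-C_2$, i.e.\ it absorbs $-h\,(\tau\cdot\nu)$ into the constant pointwise in $\Sigma\cap B_{3/4}\cap\mathcal{C}$. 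The justification (which the paper indeed does not spell out) is short and does not require a touching-point argument: $\tau\cdot\nu$ vanishes identically on $\Sigma\cap\partial\mathcal{C}$ and, by~\eqref{REGSI} together with $\tau\in C^{1,1}$, is uniformly $C^{0,\frac{1+s}{2}}$ on $\Sigma\cap B_1$, so $|\tau(x)\cdot\nu(x)|\le C\,d(x)^{\frac{1+s}{2}}$ where $d(x):=\mathrm{dist}(x,\Sigma\cap\partial\mathcal{C})$. On the other hand, splitting the integral defining $h(x)$ at scale $d(x)$ and using interior smoothness of $\nu$ on $B_{d(x)/2}(x)$ below that scale and the global $C^{0,\frac{1+s}{2}}$ bound above it gives $h(x)\le C\bigl(1+\log\tfrac{1}{d(x)}\bigr)$. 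The product $d^{\frac{1+s}{2}}\log\tfrac1d$ is bounded (indeed vanishes as $d\to0$), so $|h\,(\tau\cdot\nu)|\le C$ everywhere and the $v$-inequality holds with $\ell=a$ and a finite $C_0$. Your instinct that this step needs an argument is right; your guess that it requires playing the two equations against each other at the contact point overshoots.

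Two smaller deviations: you run a compactness covering by small balls around stickiness points and a trivial bound $\nu_{n+1}\ge c>0$ on the complement, whereas the paper applies Lemma~\ref{LE:31} once at unit scale centered at the origin, and you center Lemma~\ref{CUTLE} at varying $z\in\Sigma\cap\partial\mathcal{C}$, whereas the paper uses $x_0=0$, $R=1$. Both variants are correct but more roundabout than the paper's single-shot application.
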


\begin{proof} We want to apply Lemma~\ref{LE:31}
with~${\mathcal{S}}:=\Sigma$, ${\mathcal{D}}:=\Sigma\cap{\mathcal{C}}$, $u:=\nu_{n+1}$, $v:=\tau\cdot\nu$, $
{\mathcal{K}}(x,y):=\frac1{|x-y|^{n+1+s}}$, and the corresponding operator~${\mathcal{L}}$
as in~\eqref{osjkdwfrjptykkhjxi19iu24ojt45l}
(and notice that~\eqref{oqwekfjug54toXypuk} is automatically verified with this choice).

We stress that assumption~\eqref{oqwekfjug54toXypuk:1a} is satisfied, thanks to~\eqref{OSJ324T0xY56P7OU-PKRFOaosd:aaihsdjdvl}.

We also observe that whenever~$\nu_{n+1}(x)=0$, we have that~$x\in\partial{\mathcal{C}}$
and therefore~$\tau(x)\cdot\nu(x)=0$. Accordingly,
hypothesis~\eqref{1-SIPDOQ3RL24RTY0XUP.99} is fulfilled in this setting.
Additionally, hypothesis~\eqref{1-SIPDOQ3RL24RTY0XUP} is a consequence of the graphical
structure of~$\Sigma$ in~${\mathcal{C}}$.

Now, recalling
Lemma~\ref{CUTLE} (used here with~$x_0:=0$ and~$R:=1$), we write that
\begin{equation}\label{POajlsqwr9ghiypj}
{\mathcal{L}}\nu(x)
+\nu(x)\int_{\Sigma\cap B_1}\frac{1-\nu(x)\cdot\nu(y)}{|x-y|^{n+1+s}}\,d{\mathcal{H}}^n_y=a(x)\nu(x)+f(x).
\end{equation}
In particular,
\begin{equation}\label{oqljsmdotgRfTPkswperf2-o3pr4r.01}\begin{split}
{\mathcal{L}}\nu_{n+1}(x)&\le
{\mathcal{L}}\nu_{n+1}(x)
+\nu_{n+1}(x)\int_{\Sigma\cap B_1}\frac{1-\nu(x)\cdot\nu(y)}{|x-y|^{n+1+s}}\,d{\mathcal{H}}^n_y\\&=a(x)\nu_{n+1}(x)+f_{n+1}(x)\\&\le a(x)\nu_{n+1}(x).\end{split}
\end{equation}

We also deduce from Lemma~\ref{COMMUTATOR} that, for all~$x\in\Sigma\cap B_{3/4}$,
$$ |\tau(x)\cdot{\mathcal{L}}\nu(x)-{\mathcal{L}}(\tau\cdot\nu)(x)|\le
C_1,$$
for some~$C_1>0$ depending only on~$n$, $s$, $\Sigma$, and~$\|\tau\|_{C^{1,1}(\Sigma\cap B_1)}$
(we stress that the norm of~$\nu$ in~$C^{\frac{1+s}2}$ is bounded uniformly,
thanks to~\cite{MR3532394}).

This and~\eqref{POajlsqwr9ghiypj} yield that, for all~$x\in\Sigma\cap B_{3/4}$,
\begin{equation}\label{oqljsmdotgRfTPkswperf2-o3pr4r.02}\begin{split}&\!\!\!\!\!\!
{\mathcal{L}}(\tau\cdot\nu)(x)\\&\ge
\tau(x)\cdot{\mathcal{L}}\nu(x)-C_1\\&=
a(x)\tau(x)\cdot\nu(x)+
\tau(x)\cdot f(x)-\tau(x)\cdot\nu(x)\int_{\Sigma\cap B_1}\frac{1-\nu(x)\cdot\nu(y)}{|x-y|^{n+1+s}}\,d{\mathcal{H}}^n_y-C_1\\&\ge
a(x)\tau(x)\cdot\nu(x)-C_2,\end{split}
\end{equation}
for some~$C_2>0$ depending only on~$n$, $s$, $\Sigma$, $E_0$, and~$\|\tau\|_{C^{1,1}(\Sigma\cap B_1)}$.

The inequalities in~\eqref{oqljsmdotgRfTPkswperf2-o3pr4r.01} and~\eqref{oqljsmdotgRfTPkswperf2-o3pr4r.02} allow us to use
Lemma~\ref{LE:31} and obtain that~$|
\tau\cdot\nu|\le C \nu_{n+1}$ in~$\Sigma\cap B_{1/2}$.
{F}rom this, and possibly swapping~$\tau$ with~$-\tau$,
the desired result follows.
\end{proof}

We observe that Corollary~\ref{ojsndofkw34rtgoCIr7},
in tandem with Lemma~\ref{L2sw2I2C32rA},
suffices to prove that~$u_E(\partial\Omega)\cap B_\rho(X_0)$
in Theorem~\ref{THM1}
is an $(n-2)$-dimensional surface of Lipschitz class. We will however
improve this regularity in the forthcoming pages.

\section{$C^{1,\gamma}$-regularity of the
trace of nonlocal minimal graphs at points of stickiness and proof of Theorem~\ref{THM1}}\label{LAKPMOD-iq70jwohg0495ti}

In this section, we present the proof of Theorem~\ref{THM1}.

\subsection{Weighted averages on annuli}

An essential ingredient to prove the~$C^{1,\alpha}$-regularity of the boundary trace
is to control the weighted averages of the vertical component of the normal
of the surface on concentric annuli. To this end, we present the following result:

\begin{lemma}\label{PplemdfltX34tyh.2}
Let~$\alpha\in(s,1]$ and~${\mathcal{S}}\subseteq\R^{n+1}$ be a bounded, connected hypersurface of class~$C^{1,\alpha}$
such that~$0\in{\mathcal{S}}$. 

Given a set~$V\subseteq\R^{n+1}$, let
$${\mathcal{L}}_V f(x):=\int_{{\mathcal{S}}\cap V}\frac{ f(y)-f(x)}{|x-y|^{n+1+s}}\,d{\mathcal{H}}^n_y.$$

Let~$u:{\mathcal{S}}\to\R$, with~$u\ge0$ in~${\mathcal{S}}\cap B_2$.

Assume that
\begin{equation}\label{o1edRsjdepwrgk4pyuj203ikrtgpthyuj}
{\mathcal{L}}_{B_1} u\le Mu\quad{\mbox{ in }}
{\mathcal{S}}\cap B_1\cap\{u>0\},\end{equation}
for some~$M>0$.

Assume additionally that there exists~$\delta\in\left(0,\frac1{10}\right)$ such that,
for every~$r\in\left(0,\frac12\right)$,
\begin{equation*}
{\mbox{there exists~$z_r\in{\mathcal{S}}\cap (B_{9r/10}\setminus B_{7r/10})$
such that~${\mathcal{S}}\cap B_{\delta r}(z_r)\subseteq\{u>0\}$.}}\end{equation*}

Then,
there exist~$r_\star\in(0,1)$,
depending only on~$n$, 
$s$, $\alpha$,
${{\mathcal{S}}}$, $\delta$, and~$M$, and~$
C_\star>0$,
depending only on~$n$, 
$s$, $\alpha$,
${{\mathcal{S}}}$, and~$\delta$ (but independent of~$M$), such that, for all~$r\in(0,r_\star]$,
\begin{equation}\label{oq2jfgmoptyhlp32ktgkjogtn4839n19vib6iyoy} \int_{{\mathcal{S}}\cap (B_1\setminus B_r)}\frac{ u(y)}{|y|^{n+1+s}}\,d{\mathcal{H}}^n_y \le
C_\star \int_{{\mathcal{S}}\cap (B_{r}\setminus B_{r/2})} \frac{ u(y)}{|y|^{n+1+s}}\,d{\mathcal{H}}^n_y.\end{equation}
\end{lemma}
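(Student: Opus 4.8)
The heart of the matter is a dyadic iteration: if we write $A_k := \int_{\mathcal{S}\cap(B_{2^{-k+1}}\setminus B_{2^{-k}})}\frac{u(y)}{|y|^{n+1+s}}\,d\mathcal{H}^n_y$ for the weighted mass on dyadic annuli, then the quantity $\int_{\mathcal{S}\cap(B_1\setminus B_r)}\frac{u}{|y|^{n+1+s}}$ with $r = 2^{-N}$ is (up to the geometry of $\mathcal{S}$) comparable to $\sum_{k\le N} A_k$, while the right-hand side of~\eqref{oq2jfgmoptyhlp32ktgkjogtn4839n19vib6iyoy} is comparable to $A_N$. So the claim reduces to showing $\sum_{k\le N} A_k \le C_\star A_N$, and this will follow if we can prove a \emph{one-step backward Harnack}: there is $\theta\in(0,1)$ and $r_\star$ such that, for $r\le r_\star$, the mass on the outer region $\mathcal{S}\cap(B_1\setminus B_r)$ is controlled by $\theta$ times itself plus a constant times $A_N$; equivalently, $A_{k} \le \theta\, A_{k+1} + (\text{lower-order})$ or directly $\sum_{k\le N-1}A_k \le \theta \sum_{k\le N} A_k + C A_N$. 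Summing a geometric series then closes everything. The plan is therefore to establish this single contraction estimate at each scale $r$ and iterate.

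To get the contraction estimate at scale $r$, I would test the subsolution inequality~\eqref{o1edRsjdepwrgk4pyuj203ikrtgpthyuj} at a well-chosen point. The natural idea is: pick the point $z_r \in \mathcal{S}\cap(B_{9r/10}\setminus B_{7r/10})$ provided by the hypothesis, where $\mathcal{S}\cap B_{\delta r}(z_r)\subseteq\{u>0\}$, and more precisely a point $x_r$ in a slightly smaller ball around $z_r$ where $u(x_r)$ is comparable to its average over $\mathcal{S}\cap B_{\delta r/2}(z_r)$; by a Harnack-chain/barrier argument inside the positivity region (using~\eqref{o1edRsjdepwrgk4pyuj203ikrtgpthyuj}) one should get $u(x_r)\lesssim A_N$ — essentially because the average of $u$ on an annular piece at distance $\sim r$ from the origin is, by definition, $\sim r^{n+1+s}A_N / \mathcal{H}^n(\text{annulus piece}) \sim r^{1+s}A_N$, and the factor $r^{\delta}$-type loss is absorbed. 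Then write~\eqref{o1edRsjdepwrgk4pyuj203ikrtgpthyuj} at $x_r$ as
\begin{equation*}
\int_{\mathcal{S}\cap(B_1\setminus B_{x_r\text{-local}})}\frac{u(y)}{|x_r-y|^{n+1+s}}\,d\mathcal{H}^n_y \;\le\; M\,u(x_r) + \int_{\mathcal{S}\cap B_{\text{local}}}\frac{u(x_r)-u(y)}{|x_r-y|^{n+1+s}}\,d\mathcal{H}^n_y,
\end{equation*}
and bound the last (near-diagonal) term using the regularity of $u$ near $x_r$ — here is where $\alpha>s$ is used, exactly as in the commutator Lemma~\ref{COMMUTATOR}, so the near-diagonal contribution is controlled by a constant times a Hölder seminorm of $u$ on $\mathcal{S}\cap B_{\delta r}(z_r)$, which in turn (interior estimates up to the positivity set) is $\lesssim r^{-1}\cdot$(oscillation) and ultimately $\lesssim M u(x_r) + A_N$ after rescaling. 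On the left, since $|x_r| \sim r$ and $|x_r - y| \le |x_r| + |y| \le |y|(1 + r/|y|) \lesssim |y|$ for $y$ outside $B_r$, one has $|x_r-y|^{-(n+1+s)} \gtrsim |y|^{-(n+1+s)}$, so the left side dominates a constant times $\int_{\mathcal{S}\cap(B_1\setminus B_r)}\frac{u(y)}{|y|^{n+1+s}}\,d\mathcal{H}^n_y$. Combining: $\int_{\mathcal{S}\cap(B_1\setminus B_r)}\frac{u}{|y|^{n+1+s}} \lesssim M u(x_r) + A_N \lesssim A_N$, which is in fact \emph{stronger} than a contraction — it directly gives~\eqref{oq2jfgmoptyhlp32ktgkjogtn4839n19vib6iyoy} at that scale, provided the implicit constants are uniform. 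The role of $r_\star$ is to guarantee that $\delta r$ is small enough that the ball $\mathcal{S}\cap B_{\delta r}(z_r)$ is a ``nice'' $C^{1,\alpha}$-graph patch on which interior Harnack and Hölder estimates apply with scale-invariant constants, and that $M u(x_r)$ doesn't dominate — this is where $r_\star$ depends on $M$, while $C_\star$ does not.

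The main obstacle I anticipate is making the bound $u(x_r) \lesssim A_N$ rigorous and scale-uniform. The point is that $A_N$ is a weighted \emph{average} of $u$ over a thin annular slice of $\mathcal{S}$ at distance $\sim r$, whereas we need a \emph{pointwise} bound on $u$ at a specific point $x_r$ that lies in the positivity region but whose relation to that annular slice is only topological. Bridging average-to-pointwise requires an interior Harnack inequality for the operator ${\mathcal{L}}_{B_1}$ restricted to the connected positivity patch $\mathcal{S}\cap B_{\delta r}(z_r)$, combined with a Harnack chain connecting $x_r$ to a point realizing the annular average — and one must check the chain has a bounded number of links, uniformly in $r$, which is where connectedness of $\mathcal{S}$ and the $C^{1,\alpha}$ structure (so that geodesic and Euclidean distances are comparable, and balls of comparable radius overlap in a controlled way) enter. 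A secondary technical nuisance is that ${\mathcal{L}}_{B_1}$ is a \emph{truncated} operator (integration only over $B_1$), so the standard interior Harnack must be applied in the form valid for such truncated kernels with a bounded nonlocal tail — but the tail here is precisely $\int_{\mathcal{S}\cap(B_1\setminus B_r)}\frac{u}{|y|^{n+1+s}}$, the very quantity we are estimating, so the argument has a mild self-referential structure that must be organized carefully (e.g.\ by first proving the contraction form $\mathrm{LHS}\le \theta\,\mathrm{LHS} + C A_N$ and only then absorbing).
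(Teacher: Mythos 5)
Your high-level outline is right---test the subsolution inequality at a point in the positivity ball near $z_r$, split the operator into a near-diagonal piece and a far tail, and observe that the far tail dominates the quantity you want---but the two gaps you identify (average-to-pointwise for $u(x_r)$, and bounding the near-diagonal contribution by some H\"older seminorm of $u$) are genuine, and the fills you propose (Harnack chains, interior H\"older estimates) are not how the paper closes them. The paper's device is a \emph{touching-from-below} argument that resolves both gaps at once with no interior regularity theory for $u$. Fix any $\zeta\in\mathcal{S}\cap B_{\delta r/10}(z_r)$ and consider the ``capped parabola''
\begin{equation*}
P_\lambda(x)=
\begin{dcases}
\lambda\Bigl(\tfrac{\delta^2r^2}{4}-|x-\zeta|^2\Bigr) & \text{if } x\in B_{\delta r}(\zeta),\\
-\,u(\zeta) & \text{otherwise,}
\end{dcases}
\end{equation*}
and slide $\lambda$ up from~$0$ until $P_\lambda$ touches $u$ from below at some $x_\star$; since $P_\lambda<0$ outside $B_{\delta r/2}(\zeta)$ and $u\ge0$, the touching point lands in $B_{\delta r/2}(\zeta)\subset\{u>0\}$. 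Crucially $\lambda\delta^2 r^2/4=P_\lambda(\zeta)\le u(\zeta)$, so the $C^{1,1}$-norm of $P_\lambda$ is $\lesssim u(\zeta)/r^2$, \emph{and} $u(x_\star)=P_\lambda(x_\star)\le P_\lambda(\zeta)\le u(\zeta)$; so the near-diagonal bound and the pointwise bound both come for free, by explicit computation on $P_\lambda$, without ever invoking H\"older regularity of $u$ or a Harnack chain. Then $\mathcal{L}_{B_1}(u-P_\lambda)(x_\star)\ge\int_{\mathcal{S}\cap(B_1\setminus B_r)}\frac{u(y)}{|x_\star-y|^{n+1+s}}\,d\mathcal{H}^n_y\gtrsim\int_{\mathcal{S}\cap(B_1\setminus B_r)}\frac{u(y)}{|y|^{n+1+s}}\,d\mathcal{H}^n_y$, while $\mathcal{L}_{B_1}(u-P_\lambda)(x_\star)=\mathcal{L}_{B_1}u(x_\star)-\mathcal{L}_{B_1}P_\lambda(x_\star)\le Mu(x_\star)+C\,u(\zeta)/(\delta^{1+s}r^{1+s})$, giving the left-hand side of~\eqref{oq2jfgmoptyhlp32ktgkjogtn4839n19vib6iyoy} bounded by $C(M+r^{-1-s})\,u(\zeta)$. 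Finally---and this is the step your proposal misses---\emph{integrate over~$\zeta\in\mathcal{S}\cap B_{\delta r/10}(z_r)$}: that single integration converts the pointwise bound $u(\zeta)$ into the annular average on the right of~\eqref{oq2jfgmoptyhlp32ktgkjogtn4839n19vib6iyoy}, because $B_{\delta r/10}(z_r)\subset B_r\setminus B_{r/2}$ and $|\zeta|\sim r$ there. You had the averaging on the wrong side of the argument: instead of trying to reduce the annular average down to a pointwise value via Harnack, fix the free parameter $\zeta$, obtain a pointwise bound in terms of $u(\zeta)$, and average at the very end. The $M$-dependence of $r_\star$ then drops out exactly as you anticipated: one takes $r_\star$ small enough that $M\,r_\star^{1+s}$ is absorbed by the $r^{-1-s}$ term, so $C_\star$ is $M$-independent.

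Also note that your dyadic contraction framing is unnecessary scaffolding: as you yourself observe at the end, the barrier argument at a single scale already gives~\eqref{oq2jfgmoptyhlp32ktgkjogtn4839n19vib6iyoy} directly, with no self-referential absorption needed, because the near contribution is controlled by $u(\zeta)$ and not by the tail itself.
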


\begin{proof} Let~$\zeta\in {\mathcal{S}}\cap B_{\delta r/10}(z_r)$.
Given $\lambda\in[0,+\infty)$, we define
\begin{equation*} \begin{split}P_\lambda(x)=
\begin{dcases}
\lambda\left(\frac{\delta^2r^2}4-|x-\zeta|^2\right)
& {\mbox{ if }}x\in B_{\delta r}(\zeta),\\
-u(\zeta) & {\mbox{ if }}x\in{\mathcal{S}}\setminus B_{\delta r}(\zeta).\end{dcases}\end{split}\end{equation*}
When~$\lambda=0$, we have that~$ P_\lambda\le u$ in~${\mathcal{S}}\cap B_1$.
Hence, we can take the largest~$\lambda\ge0$
for which~$ P_\lambda\le u$ in~${\mathcal{S}}\cap B_1$.
This produces a touching point~$x_\star\in B_{\delta r/2}(\zeta)$.

As a result,
\begin{equation*}\begin{split}&
{\mathcal{L}}_{B_1}(u-P_\lambda)(x_\star)=
\int_{{\mathcal{S}}\cap B_1}\frac{ (u-P_\lambda)(y)}{|x_\star-y|^{n+1+s}}\,d{\mathcal{H}}^n_y\ge
\int_{{\mathcal{S}}\cap (B_1\setminus B_r)}\frac{ (u-P_\lambda)(y)}{|x_\star-y|^{n+1+s}}\,d{\mathcal{H}}^n_y\\&\qquad\qquad\qquad\ge
\int_{{\mathcal{S}}\cap (B_1\setminus B_r)}\frac{ u(y)}{|x_\star-y|^{n+1+s}}\,d{\mathcal{H}}^n_y.
\end{split}
\end{equation*}

Since, when~$y\in\R^{n+1}\setminus B_r$,
$$|x_\star-y|\le|x_\star-\zeta|+|\zeta|+|y|\le\frac{\delta r}2+\frac{r}2+|y|\le r+|y|\le2|y|,$$
we conclude that
\begin{equation}\label{o1edRsjdepwrgk4pyuj203ikrtgpthyuj2}
{\mathcal{L}}_{B_1}(u-P_\lambda)(x_\star)\ge\frac1{2^{n+1+s}}\int_{{\mathcal{S}}\cap (B_1\setminus B_r)}\frac{ u(y)}{|y|^{n+1+s}}\,d{\mathcal{H}}^n_y.\end{equation}

It is useful to observe that
\begin{equation}\label{oljsdmweINqwdfvLCYH:abnj8} \frac{\lambda\delta^2r^2}4=P_\lambda(\zeta)\le u(\zeta).\end{equation}

Now we define
$$ g(x):=P_\lambda\left(x_\star+\frac{\delta rx}{10}\right)$$
and we remark that~${\mathcal{S}}_r:=\frac{10({\mathcal{S}}-x_\star)}{\delta r}$
is a zoom-in of~${\mathcal{S}}$ (hence, its
local $C^{1,\alpha}$-regularity is controlled by the one of~${\mathcal{S}}$).

Thus, recalling~\eqref{OSJ324T0xY56P7OU-PKRFOaosd:aaihsdjdvl} and footnote~\ref{opqlsjdwfworltgmb2134t5y67u23ed.32ojwfvl},
\begin{equation}\label{qowjhdfnt034rgrbRF.3rtgrbfRb0pjmfewvoUhnhyshnd}
\begin{split}&
|{\mathcal{L}}_{B_{\delta r/10}(x_\star)}P_\lambda (x_\star)|
=\left|
\int_{{\mathcal{S}}\cap B_{\delta r/10}(x_\star)}\frac{ P_\lambda(y)-P_\lambda(x_\star)}{|x_\star-y|^{n+1+s}}\,d{\mathcal{H}}^n_y
\right|
\\&\quad=\frac{10^{1+s}}{\delta^{1+s}r^{1+s}}\left|
\int_{{\mathcal{S}}_r\cap B_1}\frac{g(x)-g(0)}{|x|^{n+1+s}}\,d{\mathcal{H}}^n_x
\right|\le \frac{C_0\,
[g]_{C^{1,1}( B_1)}}{\delta^{1+s}r^{1+s}}\\&\quad
\le
\frac{C_0\,\delta^2\,r^2\,
[P_\lambda]_{C^{1,1}( B_{\delta r/10}(x_\star))}}{\delta^{1+s}r^{1+s}}
\le
\frac{C_0\,\delta^2\,r^2\,
[P_\lambda]_{C^{1,1}( B_{3\delta r/5}(\zeta))}}{\delta^{1+s}r^{1+s}}\\&\quad
\le \frac{C_0\,\lambda\delta^2r^2}{\delta^{1+s}r^{1+s}},
\end{split}\end{equation}for some~$C_0>0$ depending only on~$n$, $s$, and~${{\mathcal{S}}}$,
and possibly varying at each step of the calculation.

Therefore, in light of~\eqref{oljsdmweINqwdfvLCYH:abnj8},
\begin{equation}\label{osjdweighTGUBHJSMDwfle098ytrdfnoJA23fegKS}
|{\mathcal{L}}_{B_{\delta r/10}(x_\star)}P_\lambda (x_\star)|\le \frac{C_0\,u(\zeta)}{\delta^{1+s}r^{1+s}}.
\end{equation}

Furthermore, for all~$y\in{\mathcal{S}}\cap B_1$, recalling~\eqref{oljsdmweINqwdfvLCYH:abnj8} we have that\begin{eqnarray*}
-P_\lambda(y)&\le&
\begin{dcases}
\lambda \delta^2r^2
& {\mbox{ if }}y\in B_{\delta r}(\zeta),\\
u(\zeta) & {\mbox{ if }}x\in{\mathcal{S}}\cap (B_1\setminus B_{\delta r}(\zeta))\end{dcases}\\&\le&
\begin{dcases}
4u(\zeta)
& {\mbox{ if }}y\in B_{\delta r}(\zeta),\\u(\zeta) & {\mbox{ if }}x\in{\mathcal{S}}\cap (B_1\setminus B_{\delta r}(\zeta))\end{dcases}\\&\le&4u(\zeta).
\end{eqnarray*}
As a consequence,
\begin{equation*}\begin{split}&
{\mathcal{L}}_{B_1\setminus B_{\delta r/10}(x_\star)}P_\lambda (x_\star)\\&\quad
=\int_{{\mathcal{S}}\cap (B_1\setminus B_{\delta r/10}(x_\star))}\frac{P_\lambda(y)-P_\lambda(x_\star)}{|x_\star-y|^{n+1+s}}\,d{\mathcal{H}}^n_y\\&\quad\ge
-(4u(\zeta)+P_\lambda(x_\star))\int_{{\mathcal{S}}\cap (B_1\setminus B_{\delta r/10}(x_\star))}\frac{d{\mathcal{H}}^n_y}{|x_\star-y|^{n+1+s}}\\&\quad=
-(4u(\zeta)+u(x_\star))\int_{{\mathcal{S}}\cap (B_1\setminus B_{\delta r/10}(x_\star))}\frac{d{\mathcal{H}}^n_y}{|x_\star-y|^{n+1+s}}\\&\quad\ge-\frac{C_0\,(u(\zeta)+u(x_\star))}{\delta^{1+s}r^{1+s}}.
\end{split}\end{equation*}

Combining this observation and~\eqref{osjdweighTGUBHJSMDwfle098ytrdfnoJA23fegKS}, we get that
\begin{equation*}
{\mathcal{L}}_{B_1}P_\lambda (x_\star)\ge-\frac{C_0\,(u(\zeta)+u(x_\star))}{\delta^{1+s}r^{1+s}}.\end{equation*}
{F}rom this, \eqref{o1edRsjdepwrgk4pyuj203ikrtgpthyuj},
and~\eqref{o1edRsjdepwrgk4pyuj203ikrtgpthyuj2}, it follows that
\begin{equation*}\begin{split}&
\frac1{2^{n+1+s}}\int_{{\mathcal{S}}\cap (B_1\setminus B_r)}\frac{ u(y)}{|y|^{n+1+s}}\,d{\mathcal{H}}^n_y -\frac{C_0\,(u(\zeta)+u(x_\star))}{\delta^{1+s}r^{1+s}}\\&\quad\le
{\mathcal{L}}_{B_1} (u-P_\lambda)(x_\star)+
{\mathcal{L}}_{B_1} P_\lambda(x_\star)\\&\quad
={\mathcal{L}}_{B_1} u(x_\star)\\&\quad\le M \,u(x_\star) .\end{split}
\end{equation*}
We thereby conclude that
\begin{equation*}
\frac1{2^{n+1+s}}\int_{{\mathcal{S}}\cap (B_1\setminus B_r)}\frac{ u(y)}{|y|^{n+1+s}}\,d{\mathcal{H}}^n_y \le\frac{ (M\delta^{1+s} r^{1+s}+C_0) \,(u(\zeta)+u(x_\star))}{\delta^{1+s} r^{1+s}}.
\end{equation*}

Notice also that
$$ u(x_\star)=P_\lambda(x_\star)\le P_\lambda(\zeta)\le u(\zeta),$$
leading to
\begin{equation*}
\int_{{\mathcal{S}}\cap (B_1\setminus B_r)}\frac{ u(y)}{|y|^{n+1+s}}\,d{\mathcal{H}}^n_y \le\frac{2^{n+2+s}(M\delta^{1+s} r^{1+s}+C_0) \,u(\zeta)}{\delta^{1+s} r^{1+s}}.
\end{equation*}
Hence, integrating over~$\zeta\in {\mathcal{S}}\cap B_{\delta r/10}(z_r)$,
\begin{equation*}
\int_{{\mathcal{S}}\cap (B_1\setminus B_r)}\frac{ u(y)}{|y|^{n+1+s}}\,d{\mathcal{H}}^n_y \le\frac{C_\star(M\delta^{1+s} r^{1+s}+C_0)}{\delta^{n+1+s} r^{n+1+s}}\int_{{\mathcal{S}}\cap B_{\delta r/10}(z_r)} u(\zeta)\,d{\mathcal{H}}^n_\zeta,
\end{equation*}which, up to renaming~$C_\star$, gives the desired result,
since~${\mathcal{S}}\cap B_{\delta r/10}(z_r)\subseteq
(B_{r}\setminus B_{r/2})\cap\{u>0\}$.
\end{proof}

The estimate in~\eqref{oq2jfgmoptyhlp32ktgkjogtn4839n19vib6iyoy} is useful
in itself, since it implies that
weighted averages on annuli of radius of order~$r$
grow at least as a negative power of~$r$. More precisely, we have that:

\begin{lemma}\label{CONS1}
Assume that there exist~$r_\star\in(0,1)$ and~$C_\star>0$ such that~\eqref{oq2jfgmoptyhlp32ktgkjogtn4839n19vib6iyoy} holds true for all~$r\in(0,r_\star]$.

Then, there exist~$c$, $\beta>0$, depending only on~$C_\star$, such that,
for all~$r\in(0,r_\star]$,
$$ \int_{{\mathcal{S}}\cap (B_{r}\setminus B_{r/2})} \frac{ u(y)}{|y|^{n+1+s}}\,d{\mathcal{H}}^n_y\ge
\frac{c\,r_\star^\beta}{r^\beta}\,\int_{{\mathcal{S}}\cap (B_{1}\setminus B_{1/2})} \frac{ u(y)}{|y|^{n+1+s}}\,d{\mathcal{H}}^n_y.$$
\end{lemma}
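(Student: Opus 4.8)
The plan is to iterate the estimate~\eqref{oq2jfgmoptyhlp32ktgkjogtn4839n19vib6iyoy} along a geometric sequence of radii. For $r\in(0,r_\star]$, write
$$ A(r):=\int_{{\mathcal{S}}\cap (B_{r}\setminus B_{r/2})} \frac{ u(y)}{|y|^{n+1+s}}\,d{\mathcal{H}}^n_y
\qquad\text{and}\qquad
F(r):=\int_{{\mathcal{S}}\cap (B_1\setminus B_r)}\frac{ u(y)}{|y|^{n+1+s}}\,d{\mathcal{H}}^n_y.$$
The first observation is that $F$ is monotone: since $u\ge0$, we have $F(r)\ge F(2r)$ whenever $r,2r\in(0,r_\star]$, and moreover $F(r)-F(2r)=\int_{{\mathcal{S}}\cap(B_{2r}\setminus B_r)}u(y)|y|^{-n-1-s}\,d{\mathcal{H}}^n_y$, which (comparing weights $|y|^{-n-1-s}$ against the weight used in $A$, up to a dimensional constant) is controlled from above and below by a constant multiple of $A(2r)$. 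Hence $A(2r)$ is comparable to $F(r)-F(2r)$.

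The key step is then the dichotomy supplied by~\eqref{oq2jfgmoptyhlp32ktgkjogtn4839n19vib6iyoy}: $F(r)\le C_\star A(r)$, and since $A(r)\le c' (F(r/2)-F(r))\le c'(F(r)-F(2r))\cdot(\text{const})$—more carefully, $A(r)$ is comparable to $F(r/2)-F(r)$—we obtain $F(r)\le C'\bigl(F(r/2)-F(r)\bigr)$, i.e.
$$ F(r)\le \frac{C'}{C'+1}\,F(r/2).$$
Setting $\theta:=C'/(C'+1)\in(0,1)$ and iterating from $r_\star$ down to $r$ (i.e. applying this at radii $r_\star, r_\star/2, r_\star/4,\dots$ a total of roughly $\log_2(r_\star/r)$ times) yields $F(r)\ge \theta^{-N} F(r_\star)$ with $N\sim\log_2(r_\star/r)$, which is exactly a bound of the form $F(r)\ge c\,(r_\star/r)^\beta F(r_\star)$ with $\beta:=\log_2(1/\theta)>0$. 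Finally, one converts the bound on $F(r)$ back to a bound on $A(r)$ by using $A(r)\gtrsim F(r/2)-F(r)\ge(1-\theta)F(r/2)\gtrsim F(r)$, and bounding $F(r_\star)\ge A(1)$ up to a dimensional constant (or, if $r_\star<1$, $F(r_\star)\ge\int_{{\mathcal{S}}\cap(B_1\setminus B_{1/2})}u|y|^{-n-1-s}\ge c'' A(1)$ provided $r_\star\le1/2$, which we may assume, shrinking $r_\star$ if necessary). Collecting constants gives the claimed inequality with $c,\beta$ depending only on $C_\star$.

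The main obstacle—though a mild one—is the bookkeeping that turns the annular quantities $A(r)$ (whose integrand carries the weight $|y|^{-n-1-s}$) into the differences $F(r)-F(2r)$ and back, keeping all comparison constants purely dimensional so that the final $c,\beta$ depend on $C_\star$ alone; one must also be slightly careful that the iteration is only run on the dyadic scale down from $r_\star$, so a general $r\in(0,r_\star]$ is first rounded to the nearest dyadic fraction of $r_\star$, at the cost of another harmless constant. No new analytic input is needed beyond the monotonicity of $F$ and the hypothesis~\eqref{oq2jfgmoptyhlp32ktgkjogtn4839n19vib6iyoy}.
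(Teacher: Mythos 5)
Your proposal is correct and follows essentially the same route as the paper: telescope the hypothesis~\eqref{oq2jfgmoptyhlp32ktgkjogtn4839n19vib6iyoy} over a dyadic sequence of annuli to obtain a geometric growth rate, then handle a general radius by rounding to the dyadic scale. (The paper phrases the recursion through partial sums $S_k=\sum_{i\le k}a_i$ rather than through $F$ itself, and folds the rounding into an auxiliary parameter $\omega\in[\tfrac12,1)$ so that the dyadic sequence $r_k=\omega r_\star/2^k$ hits $r$ exactly, but the mechanism is the same.)

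Two small remarks. First, the step where you say that $F(r)-F(2r)$ is ``controlled from above and below by a constant multiple of $A(2r)$ by comparing weights'' is not needed: both $A$ and $F$ carry the \emph{same} weight $|y|^{-n-1-s}$, so $F(r)-F(2r)=A(2r)$ is an exact identity, and your recursion $F(r)\le\frac{C_\star}{C_\star+1}F(r/2)$ follows with no comparison constants at all. Second, the reduction ``we may assume $r_\star\le\tfrac12$, shrinking $r_\star$ if necessary'' has a gap: after replacing $r_\star$ by $\tfrac12$, the conclusion is proved only for $r\in(0,\tfrac12]$, and for a radius $r\in(\tfrac12,r_\star]$ the desired bound $A(r)\ge c(r_\star/r)^\beta A(1)$ does not follow from monotonicity of $F$ alone, since then $F(r)\le A(1)$ rather than $\ge$. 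This is easily repaired, and in the same spirit as your last step: for $r\in(\tfrac12,r_\star]$, split
\[
A(1)=\int_{{\mathcal S}\cap(B_1\setminus B_r)}\frac{u(y)}{|y|^{n+1+s}}\,d{\mathcal H}^n_y
+\int_{{\mathcal S}\cap(B_r\setminus B_{1/2})}\frac{u(y)}{|y|^{n+1+s}}\,d{\mathcal H}^n_y
\le F(r)+A(r)\le (C_\star+1)A(r),
\]
using~\eqref{oq2jfgmoptyhlp32ktgkjogtn4839n19vib6iyoy} at $r$ in the last inequality, and note that $(r_\star/r)^\beta<2^\beta$ there. With that patch, and after absorbing the rounding constant $2^\beta$ into $c$, your argument gives exactly the claimed estimate with $c,\beta$ depending only on $C_\star$.
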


\begin{proof} Let~$\omega\in\left[\frac12,1\right)$, to be conveniently chosen below.
We define~$r_k:= \frac{\omega\,r_\star}{2^k}$ and
$$ a_k:= \int_{{\mathcal{S}}\cap (B_{r_k}\setminus B_{r_k/2})} \frac{ u(y)}{|y|^{n+1+s}}\,d{\mathcal{H}}^n_y.$$
We remark that
\begin{eqnarray*}&&
\sum_{i=0}^{k-1}a_i
=\sum_{i=0}^{k-1}
\int_{{\mathcal{S}}\cap (B_{\omega r_\star/2^{i}}\setminus B_{\omega r_\star/2^{i+1}})} \frac{ u(y)}{|y|^{n+1+s}}\,d{\mathcal{H}}^n_y\\&&\quad=\int_{{\mathcal{S}}\cap (B_{\omega r_\star}\setminus B_{\omega r_\star/2^{k}})} \frac{ u(y)}{|y|^{n+1+s}}\,d{\mathcal{H}}^n_y=\int_{{\mathcal{S}}\cap (B_{\omega r_\star}\setminus B_{r_k})} \frac{ u(y)}{|y|^{n+1+s}}\,d{\mathcal{H}}^n_y
\end{eqnarray*}
and therefore, by~\eqref{oq2jfgmoptyhlp32ktgkjogtn4839n19vib6iyoy}
(used here with~$r:=r_k$),
\begin{equation}\label{3fon2lyq1l2jer24o35nlye1234nv} \sum_{i=0}^{k-1}a_i\le
 \int_{{\mathcal{S}}\cap (B_1\setminus B_{r_k})}\frac{ u(y)}{|y|^{n+1+s}}\,d{\mathcal{H}}^n_y\le
C_\star \int_{{\mathcal{S}}\cap (B_{r_k}\setminus B_{r_k/2})} \frac{ u(y)}{|y|^{n+1+s}}\,d{\mathcal{H}}^n_y
= C_\star \,a_k.\end{equation}
As a result,
$$  (C_\star+1)\sum_{i=0}^{k-1}a_i\le C_\star \sum_{i=0}^{k-1}a_i
+C_\star \,a_k= C_\star\sum_{i=0}^{k}a_i.$$

Thus, setting
$$ S_k:=\sum_{i=0}^{k}a_i\qquad{\mbox{and}}\qquad
C:=\frac{C_\star+1}{C_\star}\in(1,+\infty),$$we have that~$S_k\ge C S_{k-1}$, leading to~$ S_k\ge C^{k-1}\,S_1$.

Then, defining~$\beta:=\frac{\ln C}{\ln2}$ and using again~\eqref{3fon2lyq1l2jer24o35nlye1234nv},
\begin{eqnarray*}&&
\frac{2^{k\beta}}{C^2\,(C_\star+1)\,\omega^\beta}
\int_{{\mathcal{S}}\cap (B_{\omega/2}\setminus B_{\omega r_\star/4})} \frac{ u(y)}{|y|^{n+1+s}}\,d{\mathcal{H}}^n_y\\&&\quad
\le
\frac{2^{k\beta} C^{\frac{\ln\omega}{\ln2}}}{C\,(C_\star+1)\,\omega^\beta}
\int_{{\mathcal{S}}\cap (B_{\omega r_\star/2}\setminus B_{\omega r_\star/4})} \frac{ u(y)}{|y|^{n+1+s}}\,d{\mathcal{H}}^n_y\\&&\quad
=\frac{2^{k\beta} C^{\frac{\ln\omega}{\ln2}}a_1}{C\,(C_\star+1)\,\omega^\beta} \le
\frac{2^{k\beta}C^{\frac{\ln\omega}{\ln2}}S_1}{C\,(C_\star+1)\,\omega^\beta}=
\frac{C^{k-1}\,S_1}{C_\star+1}
\le\frac{S_k}{C_\star+1}\\&&\quad
=\frac{1}{C_\star+1}\left( \sum_{i=0}^{k-1}a_i+a_k\right)\le a_k=\int_{{\mathcal{S}}\cap (B_{\omega r_\star/2^k}\setminus B_{\omega r_\star/2^{k+1}})} \frac{ u(y)}{|y|^{n+1+s}}\,d{\mathcal{H}}^n_y.\end{eqnarray*}

Hence, picking~$k\in\N$ such that~$\frac{r_\star}{2^{k+1}}\le r<\frac{r_\star}{2^k}$ and taking~$\omega:=\frac{2^kr}{{r_\star}}\in\left[\frac12,1\right)$, we find that
\begin{equation}\label{oq2jfgmoptyhlp32ktgkjogtn4839n19vib6iyoy77} \frac{{r_\star^\beta}}{C^2\,(C_\star+1)\,r^\beta}
\int_{{\mathcal{S}}\cap (B_{\omega{r_\star}/2}\setminus B_{\omega{r_\star}/4})} \frac{ u(y)}{|y|^{n+1+s}}\,d{\mathcal{H}}^n_y\le
\int_{{\mathcal{S}}\cap (B_{r}\setminus B_{r/2})} \frac{ u(y)}{|y|^{n+1+s}}\,d{\mathcal{H}}^n_y.\end{equation}

Also, by~\eqref{oq2jfgmoptyhlp32ktgkjogtn4839n19vib6iyoy} (used here with~$r:=\frac{\omega{r_\star}}2$),
$$ \int_{{\mathcal{S}}\cap (B_1\setminus B_{\omega{r_\star}/2})}\frac{ u(y)}{|y|^{n+1+s}}\,d{\mathcal{H}}^n_y \le
C_\star \int_{{\mathcal{S}}\cap (B_{\omega{r_\star}/2}\setminus B_{\omega{r_\star}/4})} \frac{ u(y)}{|y|^{n+1+s}}\,d{\mathcal{H}}^n_y.$$Combining this with~\eqref{oq2jfgmoptyhlp32ktgkjogtn4839n19vib6iyoy77}, we find that
$$ \frac{{r_\star^\beta}}{C^2\,C_\star\,(C_\star+1)\,r^\beta}
\int_{{\mathcal{S}}\cap (B_{1}\setminus B_{\omega{r_\star}/2})} \frac{ u(y)}{|y|^{n+1+s}}\,d{\mathcal{H}}^n_y\le
\int_{{\mathcal{S}}\cap (B_{r}\setminus B_{r/2})} \frac{ u(y)}{|y|^{n+1+s}}\,d{\mathcal{H}}^n_y.$$
Since~$\frac{\omega{r_\star}}2<\frac12$, and thus~$B_{\omega{r_\star}/2}\subseteq B_{1/2}$,
the inequality above
leads to the desired result.
\end{proof}

\subsection{Another geometric boundary Harnack inequality}

Now we use the annular growth detected in Lemma~\ref{CONS1} (as a consequence of Lemma~\ref{PplemdfltX34tyh.2})
to obtain
another form of the geometric boundary Harnack inequality, which is related in spirit to,
but technically different from, the one provided by Lemma~\ref{LE:31}.

To accomplish this plan, a useful step is supplied by the following simple
observation:

\begin{lemma}\label{SLKJmdwlegb.o3grlhjnm45tg}
Let~$\alpha\in(s,1]$ and~${\mathcal{S}}\subseteq\R^{n+1}$ be a bounded, connected hypersurface of class~$C^{1,\alpha}$
such that~$0\in{\mathcal{S}}$. 

For any~$r\in(0,1)$, let~$f_r(x):=f(rx)$ and~${\mathcal{S}}_r:=\frac{{\mathcal{S}}}r$. Then, for all~$x\in
{\mathcal{S}}_r\cap B_{1/2}$,
\begin{equation*}\begin{split}&
\int_{{\mathcal{S}}_r\cap B_1}\frac{ f_r(y)-f_r(x)}{|x-y|^{n+1+s}}\,d{\mathcal{H}}^n_y
=r^{1+s}\Bigg[\int_{{\mathcal{S}}\cap B_1}\frac{ f(y)-f(rx)}{|rx-y|^{n+1+s}}\,d{\mathcal{H}}^n_y\\&\qquad\qquad
-\int_{{\mathcal{S}}\cap (B_1\setminus B_r)}\frac{ f(y)}{|rx-y|^{n+1+s}}\,d{\mathcal{H}}^n_y
+f(rx)
\int_{{\mathcal{S}}\cap (B_1\setminus B_r)}\frac{d{\mathcal{H}}^n_y }{|rx-y|^{n+1+s}}\Bigg].\end{split}
\end{equation*}
\end{lemma}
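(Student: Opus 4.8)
The plan is to carry out the change of variables $y = z/r$ directly in the integral on the left-hand side and then split off the part of the rescaled domain that falls outside $B_1$. First I would write, for $x \in {\mathcal{S}}_r \cap B_{1/2}$,
\begin{equation*}
\int_{{\mathcal{S}}_r\cap B_1}\frac{ f_r(y)-f_r(x)}{|x-y|^{n+1+s}}\,d{\mathcal{H}}^n_y
= \int_{{\mathcal{S}}_r\cap B_1}\frac{ f(ry)-f(rx)}{|x-y|^{n+1+s}}\,d{\mathcal{H}}^n_y,
\end{equation*}
using the definitions $f_r(\cdot)=f(r\,\cdot)$. Substituting $z=ry$ (so $y=z/r$, ${\mathcal{H}}^n_y = r^{-n}\,{\mathcal{H}}^n_z$, and $|x-y| = |x - z/r| = r^{-1}|rx-z|$, while $y\in {\mathcal{S}}_r\cap B_1$ corresponds to $z\in {\mathcal{S}}\cap B_r$) turns this into
\begin{equation*}
\int_{{\mathcal{S}}\cap B_r}\frac{ f(z)-f(rx)}{r^{-(n+1+s)}|rx-z|^{n+1+s}}\cdot r^{-n}\,d{\mathcal{H}}^n_z
= r^{1+s}\int_{{\mathcal{S}}\cap B_r}\frac{ f(z)-f(rx)}{|rx-z|^{n+1+s}}\,d{\mathcal{H}}^n_z.
\end{equation*}
Here the singular integral is still understood in the principal value sense, which is preserved by the dilation since $rx$ is the image of the excluded point $x$.

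Next I would rewrite the integral over ${\mathcal{S}}\cap B_r$ as the integral over ${\mathcal{S}}\cap B_1$ minus the integral over ${\mathcal{S}}\cap(B_1\setminus B_r)$, and then split the numerator $f(z)-f(rx)$ of the latter into the two pieces $f(z)$ and $-f(rx)$; note that on ${\mathcal{S}}\cap(B_1\setminus B_r)$ the integrand is no longer singular (since $x\in B_{1/2}$ forces $rx\in B_{r/2}$, which is at positive distance from $B_1\setminus B_r$), so the splitting is legitimate and each resulting piece converges absolutely. This yields exactly
\begin{equation*}
r^{1+s}\Bigg[\int_{{\mathcal{S}}\cap B_1}\frac{ f(z)-f(rx)}{|rx-z|^{n+1+s}}\,d{\mathcal{H}}^n_z
-\int_{{\mathcal{S}}\cap (B_1\setminus B_r)}\frac{ f(z)}{|rx-z|^{n+1+s}}\,d{\mathcal{H}}^n_z
+f(rx)\int_{{\mathcal{S}}\cap (B_1\setminus B_r)}\frac{d{\mathcal{H}}^n_z}{|rx-z|^{n+1+s}}\Bigg],
\end{equation*}
which is the claimed identity after renaming $z$ as $y$.

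There is no serious obstacle here: this is a bookkeeping lemma, and the only points that require a word of care are (i) checking that the dilation $z=ry$ respects the Cauchy principal value — i.e. that the symmetric neighbourhoods of the singularity $x$ on ${\mathcal{S}}_r$ map to symmetric neighbourhoods of $rx$ on ${\mathcal{S}}$, which holds because dilation is conformal — and (ii) justifying that the two split integrals over the annular region $B_1\setminus B_r$ are individually finite, which follows from the uniform lower bound $|rx-z|\ge r/2$ on that region together with the boundedness of ${\mathcal{S}}$ and the local finiteness of ${\mathcal{H}}^n$ on ${\mathcal{S}}$. Everything else is the elementary scaling computation for ${\mathcal{H}}^n$ and for the kernel $|x-y|^{-(n+1+s)}$ under a dilation of $\R^{n+1}$.
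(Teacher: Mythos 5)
Your proposal is correct and follows essentially the same route as the paper: a dilation change of variables $Y=ry$ turning the left-hand side into $r^{1+s}\int_{{\mathcal{S}}\cap B_r}\frac{f(Y)-f(rx)}{|rx-Y|^{n+1+s}}\,d{\mathcal{H}}^n_Y$, and then decomposing $\int_{B_r}=\int_{B_1}-\int_{B_1\setminus B_r}$ while splitting the non-singular annular part. The paper's proof stops at the rescaled integral and simply says ``this leads to the desired result,'' so you are just spelling out the elementary final bookkeeping that the paper leaves implicit.
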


\begin{proof} We use the notation~$X:=rx$ and~$Y:=rY$ and we see that
\begin{equation*}\begin{split}\int_{{\mathcal{S}}_r\cap B_1}\frac{ f_r(y)-f_r(x)}{|x-y|^{n+1+s}}\,d{\mathcal{H}}^n_y&=
\int_{{\mathcal{S}}_r\cap B_1}\frac{ f(ry)-f(rx)}{|x-y|^{n+1+s}}\,d{\mathcal{H}}^n_y\\&=
r^{1+s}\int_{{\mathcal{S}}\cap B_r}\frac{ f(Y)-f(X)}{|X-Y|^{n+1+s}}\,d{\mathcal{H}}^n_Y.\end{split}
\end{equation*}
This leads to the desired result.
\end{proof}

Then we have a geometric boundary Harnack inequality, with a statement prone to be iterated:

\begin{lemma}\label{LEM5.1}
Let~$\alpha\in(s,1]$ and~${\mathcal{S}}\subseteq\R^{n+1}$ be a bounded, connected hypersurface of class~$C^{1,\alpha}$
such that~$0\in{\mathcal{S}}$. 

Given a set~$V\subseteq\R^{n+1}$, let
$${\mathcal{L}}_{{\mathcal{S}},V} f(x):=\int_{{\mathcal{S}}\cap V}\frac{ f(y)-f(x)}{|x-y|^{n+1+s}}\,d{\mathcal{H}}^n_y.$$

Let~$u$, $\overline v:{\mathcal{S}}\to\R$
be such that\begin{equation}\label{EAFYJLDIKSPpwk3mbm01LK-vvNMASP.3}
{\mbox{$0\le \overline v\le u$ in~${\mathcal{S}}\cap B_1$.}}\end{equation}

Assume that, in~${\mathcal{S}}\cap B_{1/2}\cap\{u>0\}$,
\begin{equation}\label{LAxIKNSDGTBSDLS0-23rfkLSLD}\begin{split}&
{\mathcal{L}}_{{\mathcal{S}},B_1} u\le Mu\\
{\mbox{and }}\quad&
|{\mathcal{L}}_{{\mathcal{S}},B_1} u(x)|+|{\mathcal{L}}_{{\mathcal{S}},B_1} \overline v(x)|\le M\int_{{\mathcal{S}}\cap (B_1\setminus B_{1/2})}u(y)\,d{\mathcal{H}}^n_y,\end{split}\end{equation}
for some~$M>0$.

Assume additionally that there exists~$\delta\in\left(0,\frac1{10}\right)$ such that,
for every~$r\in\left(0,\frac12\right)$,
\begin{equation*}
{\mbox{there exists~$z_r\in{\mathcal{S}}\cap (B_{9r/10}\setminus B_{7r/10})$
such that~${\mathcal{S}}\cap B_{\delta r}(z_r)\subseteq\{u>0\}$.}}\end{equation*}

Then, there exist~$\eta$, 
$\rho\in\left(0,\frac1{100}\right)$,
depending only on~$n$, $s$, $\alpha$, the $C^{1,\alpha}$-regularity of~${{\mathcal{S}}}$, $\delta$, $C$,
and~$M$, such that, in~${\mathcal{S}}\cap B_{\rho}$,
\begin{equation}\label{EAFYJLDIKSPpwk3mbm01LK-vvNMASP}
{\mbox{either~$\eta u\le \overline v$ (case~1), or~$\overline v\le(1-\eta)u$ (case~2).}}\end{equation}

Moreover, setting~${\mathcal{S}}_\rho:=\frac{{\mathcal{S}}}\rho$ and,
for all~$x\in{\mathcal{S}}_\rho\cap B_1$,
\begin{equation*}\begin{split}& {\mbox{$u_\rho(x):=u(\rho x)$ and}}\\
&{\mbox{either
$v_\rho(x):=\displaystyle\frac{\overline v(\rho x)-\eta u(\rho x)}{1-\eta}$ (case 1), or
$v_\rho(x):=\displaystyle\frac{\overline v(\rho x)}{1-\eta}$ (case 2),}}
\end{split}\end{equation*}
we have that
\begin{equation}\label{EAFYJLDIKSPpwk3mbm01LK-vvNMASP.2}
{\mbox{$0\le v_\rho\le u_\rho$ in~${\mathcal{S}}_\rho\cap B_1$}}\end{equation}
and, in~${\mathcal{S}}_\rho\cap B_{1/2}\cap\{u_\rho>0\}$,
\begin{equation}\label{smwq908vr9043 85219-BGEAV:L-9143mvn5}
\begin{split}&
{\mathcal{L}}_{{\mathcal{S}}_\rho,B_1} u_\rho\le \widehat{C}\,u_\rho\\
{\mbox{and }}\quad&
|{\mathcal{L}}_{{\mathcal{S}}_\rho,B_1} u_\rho|+|{\mathcal{L}}_{{\mathcal{S}}_\rho,B_1} v_\rho|\le \widehat{C}\int_{{\mathcal{S}}_\rho\cap (B_1\setminus B_{1/2})}u_\rho(y)\,d{\mathcal{H}}^n_y,\end{split}\end{equation}
for a suitable~$\widehat{C}>0$
depending only on~$n$, $s$, $\alpha$, the $C^{1,\alpha}$-regularity of~${{\mathcal{S}}}$, $\delta$, and~$C$
(but independent of~$M$).
\end{lemma}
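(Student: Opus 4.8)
The strategy is a dichotomy-plus-rescaling argument of De Giorgi / Krylov type, powered by the annular growth estimate. The hypotheses of Lemma~\ref{PplemdfltX34tyh.2} are met (the differential inequality $\mathcal{L}_{\mathcal{S},B_1}u\le Mu$ is exactly \eqref{o1edRsjdepwrgk4pyuj203ikrtgpthyuj}, and the scale-invariant positivity sets $z_r$ are provided by hypothesis), so we obtain $r_\star\in(0,1)$ and $C_\star>0$ with \eqref{oq2jfgmoptyhlp32ktgkjogtn4839n19vib6iyoy} valid for all $r\in(0,r_\star]$, and hence, by Lemma~\ref{CONS1}, exponents $c,\beta>0$ (depending only on $C_\star$, i.e.\ only on $n,s,\alpha,\mathcal{S},\delta$) such that the weighted annular averages of $u$ grow at least like a negative power of $r$ as $r\to0$. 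This polynomial-from-below lower bound on $\int_{\mathcal{S}\cap(B_r\setminus B_{r/2})}u|y|^{-n-1-s}\,d\mathcal{H}^n_y$ is the quantitative engine; it will dominate the error terms produced when we rescale the nonlocal operator.

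The dichotomy itself is obtained by comparison with barriers on $\mathcal{S}\cap B_\rho$. We look at $w:=u-\overline v\ge0$ and $\overline v\ge0$ on $\mathcal{S}\cap B_1$, both satisfying $|\mathcal{L}_{\mathcal{S},B_1}(\cdot)|\le M\int_{\mathcal{S}\cap(B_1\setminus B_{1/2})}u\,d\mathcal{H}^n_y$ by \eqref{LAxIKNSDGTBSDLS0-23rfkLSLD}. Fix a small $\rho$ to be chosen. Evaluate, say, the infimum of $\overline v/u$ over $\mathcal{S}\cap B_\rho$; if it is $\ge\eta$ we are in case~1, and if $\sup(\overline v/u)\le 1-\eta$ we are in case~2. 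To see that at least one alternative holds, suppose both fail: there are points where $\overline v$ is very small compared to $u$ and points where $u-\overline v$ is very small compared to $u$. Use paraboloid touching from below — exactly as in the proof of Lemma~\ref{LE:31} — for whichever of $\overline v$ or $w=u-\overline v$ is the "small" nonnegative function: touch it from below at some $q$ inside $B_\rho$ by a paraboloid whose opening is controlled by $\eta$ times the value of $u$ on the far annulus. Testing $\mathcal{L}_{\mathcal{S},B_1}$ at $q$ against the touching configuration, the positive mass of the touched function over the far region $\mathcal{S}\cap(B_1\setminus B_{1/2})$ — which we have *bounded below* through the annular growth of $u$ combined with the point where $u$ itself is large — must be controlled by the right-hand side of \eqref{LAxIKNSDGTBSDLS0-23rfkLSLD} (of size $M\int u\,d\mathcal{H}^n$) plus the paraboloid's contribution (of size $\sim\eta\,\times(\text{that same integral})\times\rho^{-1-s}$) plus $\ell$-type lower-order terms. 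Choosing first $\rho$ small and then $\eta$ small (depending on $n,s,\alpha,\mathcal{S},\delta,C,M$) makes this inequality impossible, giving the dichotomy \eqref{EAFYJLDIKSPpwk3mbm01LK-vvNMASP}. Here $C$ should be read as $C_\star$ — the annular-growth constant — since that is what enters the barrier estimate.

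The last part is bookkeeping for the iteration. In case~1 set $v_\rho(x):=\bigl(\overline v(\rho x)-\eta u(\rho x)\bigr)/(1-\eta)$, in case~2 set $v_\rho(x):=\overline v(\rho x)/(1-\eta)$; in both cases $u_\rho(x):=u(\rho x)$. Then \eqref{EAFYJLDIKSPpwk3mbm01LK-vvNMASP.2} is immediate from \eqref{EAFYJLDIKSPpwk3mbm01LK-vvNMASP} and \eqref{EAFYJLDIKSPpwk3mbm01LK-vvNMASP.3}: $0\le v_\rho$ because we are in the asserted case, and $v_\rho\le u_\rho$ because in case~1 $\overline v-\eta u\le u-\eta u=(1-\eta)u$ and in case~2 $\overline v\le u\le u/(1-\eta)$ — wait, more carefully, in case~2 one needs $\overline v(\rho x)\le(1-\eta)u(\rho x)$, which is precisely the defining inequality of case~2. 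For the rescaled equations \eqref{smwq908vr9043 85219-BGEAV:L-9143mvn5}, apply Lemma~\ref{SLKJmdwlegb.o3grlhjnm45tg} with $r=\rho$: the operator $\mathcal{L}_{{\mathcal{S}}_\rho,B_1}$ applied to $u_\rho$ equals $\rho^{1+s}$ times $\mathcal{L}_{\mathcal{S},B_1}u$ evaluated at $\rho x$ plus two tail corrections over $\mathcal{S}\cap(B_1\setminus B_\rho)$. The first correction is estimated by $u(\rho x)$ times $\int_{\mathcal{S}\cap(B_1\setminus B_\rho)}|\rho x-y|^{-n-1-s}\,d\mathcal{H}^n_y\lesssim\rho^{-1-s}$ (for $x\in B_{1/2}$ so that $|\rho x-y|\ge|y|/2$), the second by $\int_{\mathcal{S}\cap(B_1\setminus B_\rho)}u(y)|y|^{-n-1-s}\,d\mathcal{H}^n_y$; by Lemma~\ref{CONS1} both of these are bounded by a constant (depending only on $n,s,\alpha,\mathcal{S},\delta,C_\star$) times $\int_{\mathcal{S}\cap(B_\rho\setminus B_{\rho/2})}u\,d\mathcal{H}^n_y\,\rho^{-n-1-s}=\int_{{\mathcal{S}}_\rho\cap(B_1\setminus B_{1/2})}u_\rho\,d\mathcal{H}^n$. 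Multiplying through by $\rho^{1+s}$ and using \eqref{LAxIKNSDGTBSDLS0-23rfkLSLD} on the leading term, every piece is $\le\widehat C\int_{{\mathcal{S}}_\rho\cap(B_1\setminus B_{1/2})}u_\rho\,d\mathcal{H}^n$ with $\widehat C$ independent of $M$ (since $\rho^{1+s}M\le M$ gets absorbed — actually one must verify the $Mu_\rho$ term: $\rho^{1+s}Mu(\rho x)\le M u_\rho(x)$ is wrong direction; rather $\rho^{1+s}M\le M$ so $\mathcal{L}_{{\mathcal{S}}_\rho,B_1}u_\rho\le \rho^{1+s}Mu_\rho+(\text{tails})\le Mu_\rho+\widehat C_0\int u_\rho$, and since in $\{u_\rho>0\}$ on $B_{1/2}$ the far integral is comparable to $u_\rho$ up to the annular-growth constant, this is $\le\widehat C u_\rho$). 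The same computation applied to $v_\rho$, which is an affine combination of $u$ and $\overline v$, gives the second line of \eqref{smwq908vr9043 85219-BGEAV:L-9143mvn5} after noting the $\eta u$ piece (case~1) contributes a term of the same form.

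**Main obstacle.** The delicate point is the dichotomy: showing that if \emph{both} alternatives in \eqref{EAFYJLDIKSPpwk3mbm01LK-vvNMASP} failed one reaches a contradiction. This requires the barrier argument of Lemma~\ref{LE:31} to be run with the paraboloid opening calibrated against the \emph{lower bound} on $\int_{\mathcal{S}\cap(B_1\setminus B_{1/2})}u\,d\mathcal{H}^n$ (not merely its positivity), so that after choosing $\rho$ small — to kill the $\rho^{-1-s}$-weighted paraboloid error relative to the $M$-bounded right-hand side — and then $\eta$ small, the inequality is genuinely violated; tracking that the resulting $\rho,\eta$ depend only on the allowed quantities, and in particular that the annular-growth constant $C_\star$ (hence $C$ in the statement) is the right object to carry, is where care is needed. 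The rescaling step is routine once Lemma~\ref{SLKJmdwlegb.o3grlhjnm45tg} and Lemma~\ref{CONS1} are in hand.
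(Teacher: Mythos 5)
There is a genuine gap in the dichotomy step. You propose to prove \eqref{EAFYJLDIKSPpwk3mbm01LK-vvNMASP} by contradiction, assuming \emph{both} alternatives fail (so that $\overline v<\eta u$ at some point of $B_\rho$ \emph{and} $u-\overline v<\eta u$ at some other point), and then running the paraboloid touching ``for whichever of $\overline v$ or $w=u-\overline v$ is the small nonnegative function.'' This does not work, for two reasons. First, touching a nonnegative function from below by a nonpositive paraboloid of the type used in Lemma~\ref{LE:31} (where $\phi_\lambda\leq -\lambda\leq 0$) is vacuous: the touching value of $\lambda$ is $0$ and the contact point is the parabola's center, so no information is extracted. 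The function being touched in Lemma~\ref{LE:31}, and in the paper's proof here, is a \emph{signed} combination that dips below zero near the contact point but retains mass in the far region. Second, and more fundamentally, the contradiction requires a \emph{lower bound on the annular integral of the touched function}; but the annular-growth estimate (Lemma~\ref{CONS1}) controls only $\int u$, not $\int\overline v$ or $\int(u-\overline v)$, and your contradiction hypothesis gives no control over these. In the worst case (e.g.\ $\overline v\equiv 0$, or $\overline v\equiv u$) one alternative simply \emph{holds}; there is nothing to contradict.

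The paper's proof is structured around a different, deterministic case split that repairs exactly this: one declares \emph{case~1} or \emph{case~2} according to whether $\int_{{\mathcal{S}}\cap(B_\rho\setminus B_{\rho/2})}\overline v\geq\frac12\int_{{\mathcal{S}}\cap(B_\rho\setminus B_{\rho/2})}u$ or not, and then touches $w:=\overline v-\eta u$ (case~1) or $w:=(1-\eta)u-\overline v$ (case~2). Precisely this choice guarantees $\int_{{\mathcal{S}}\cap(B_\rho\setminus B_{\rho/2})}w\geq\frac14\int_{{\mathcal{S}}\cap(B_\rho\setminus B_{\rho/2})}u$, which via Lemma~\ref{CONS1} is bounded below by $c\,r_\star^\beta\,I\,\rho^{n+1+s-\beta}$ with $I:=\int_{{\mathcal{S}}\cap(B_1\setminus B_{1/2})}u$; and, since $\overline v\geq 0$ (resp.\ $\overline v\leq u$), one also has $w\geq -\eta u$, which together with the sup-estimate \eqref{olsmdcjp45oyhljXy} gives $w\geq -C_\star\eta I$ in ${\mathcal{S}}\cap B_{1/2}$. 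Now a paraboloid of opening $\sim\eta I/\rho^2$ can touch $w$ nontrivially from below, and the touching at the contact point $y_0$ yields an \emph{upper} bound on $\int_{{\mathcal{S}}\cap(B_\rho\setminus B_{\rho/2})}w|y|^{-n-1-s}$ of the form $\bigl(M+\widehat C C_\star\eta/\rho^{1+s}+\underline C\eta\bigr)I$; comparison with the lower bound forces $M+\widehat C C_\star\eta/\rho^{1+s}+\underline C\eta\geq c\,r_\star^\beta/\rho^\beta$, which fails once $\rho$ is fixed in terms of $M$ and then $\eta$ is chosen small. Your reading of the rescaling part (via Lemma~\ref{SLKJmdwlegb.o3grlhjnm45tg} together with Lemma~\ref{CONS1} and \eqref{olsmdcjp45oyhljXy}) is essentially the paper's argument, and your interpretation of the constant $C$ in the statement as the annular-growth constant $C_\star$ is the sensible one; the core omission is the annular-integral case split, without which the touching argument has neither a nonnegative supersolution deficit to touch nor a far-mass lower bound to contradict.
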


\begin{proof} We use the short notation
$$ I:=\int_{{\mathcal{S}}\cap (B_1\setminus B_{1/2})}u(y)\,d{\mathcal{H}}^n_y.$$
In view of Lemma~\ref{PplemdfltX34tyh.2}, 
the estimate in~\eqref{oq2jfgmoptyhlp32ktgkjogtn4839n19vib6iyoy} holds true, and this allows us to
use Lemma~\ref{CONS1} and conclude (up to renaming constants) that
\begin{equation}\label{AOSJNbSs203tjghnt90o3rtg1qwsxfty678uhjo0wo1-2rjgb}
\int_{{\mathcal{S}}\cap (B_{\rho}\setminus B_{\rho/2})} u(y)\,d{\mathcal{H}}^n_y\ge
c\,{r_\star^\beta}\,I\,\rho^{n+1+s-\beta}.\end{equation}

Moreover, we recall that,
by~\cite[Lemma~6.1]{MAXPLE}, for all~$r\in(0,1)$, and\footnote{Strictly speaking, in~\cite{MAXPLE}
the ambient hypersurface was assumed to be of class~$C^2$, but in fact it suffices, with the same proof, to assume~$C^{1,\alpha}$ with~$\alpha\in(s,1]$. Also, after scaling,
the estimate there would produce~$\int_{{\mathcal{S}}\cap B_{r}}u(y)\,d{\mathcal{H}}^n_y$ on the right-hand side
of~\eqref{olsmdcjp45oyhljXy}: however, the same proof produces the sharper bound~$\int_{{\mathcal{S}}\cap (B_{r}\setminus B_{r/2})}u(y)\,d{\mathcal{H}}^n_y$. A similar observation was put forth in the remark after Lemma~5.2 in~\cite{MR3661864}.} up to renaming~$C_\star$ with respect to~\eqref{oq2jfgmoptyhlp32ktgkjogtn4839n19vib6iyoy},
\begin{equation}\label{olsmdcjp45oyhljXy} \sup_{{\mathcal{S}}\cap B_{r/2}}u\le \frac{C_\star}{r^n}\,\int_{{\mathcal{S}}\cap (B_{r}\setminus B_{r/2})}u(y)\,d{\mathcal{H}}^n_y.\end{equation}

To prove the desired result, 
we pick~$\rho>0$, to be taken suitably small here below, and distinguish two cases: either (case~1)
$$ \int_{{\mathcal{S}}\cap (B_\rho\setminus B_{\rho/2})}\overline v(y)\,d{\mathcal{H}}^n_y\ge \frac12\int_{{\mathcal{S}}\cap (B_\rho\setminus B_{\rho/2})}u(y)\,d{\mathcal{H}}^n_y,$$
or (case~2)
$$ \int_{{\mathcal{S}}\cap (B_\rho\setminus B_{\rho/2})}\overline v(y)\,d{\mathcal{H}}^n_y<\frac{1}2
\int_{{\mathcal{S}}\cap (B_\rho\setminus B_{\rho/2})}u(y)\,d{\mathcal{H}}^n_y.$$
In case~1, we define~$w:=\overline v-\eta u$, with~$\eta\in\left(0,\frac1{100}\right)$ to be taken conveniently small. We see that
\begin{equation}\label{AOSJNbSs203tjghnt90o3rtg1qwsxfty678uhjo0wo1-2rjgb1}\begin{split}
\int_{{\mathcal{S}}\cap (B_\rho\setminus B_{\rho/2})}w(y)\,d{\mathcal{H}}^n_y&
\ge \left(\frac12-\eta\right) \int_{{\mathcal{S}}\cap (B_\rho\setminus B_{\rho/2})}u(y)\,d{\mathcal{H}}^n_y\\&\ge\frac{1}4
\int_{{\mathcal{S}}\cap (B_\rho\setminus B_{\rho/2})}u(y)\,d{\mathcal{H}}^n_y.\end{split}\end{equation}
Also, since~$\overline v\ge0$, we find that\begin{equation}\label{olsmdcjp45oyhljXy.00}w\ge-\eta u.\end{equation}

In case~2, we define~$w:=(1-\eta) u-\overline v$. We see that
\begin{equation}\label{AOSJNbSs203tjghnt90o3rtg1qwsxfty678uhjo0wo1-2rjgb0} \begin{split}\int_{{\mathcal{S}}\cap (B_\rho\setminus B_{\rho/2})}w(y)\,d{\mathcal{H}}^n_y&
\ge \left(\frac12-\eta\right) \int_{{\mathcal{S}}\cap (B_\rho\setminus B_{\rho/2})}u(y)\,d{\mathcal{H}}^n_y\\&\ge\frac{1}4
\int_{{\mathcal{S}}\cap (B_\rho\setminus B_{\rho/2})}u(y)\,d{\mathcal{H}}^n_y.\end{split}\end{equation}
Also, since~$\overline v\le u$, we find that\begin{equation}\label{olsmdcjp45oyhljXy.0}w\ge(1-\eta) u-u=-\eta u.\end{equation}

Interestingly, combining~\eqref{olsmdcjp45oyhljXy} (here with~$r:=1$) and either~\eqref{olsmdcjp45oyhljXy.00}
or~\eqref{olsmdcjp45oyhljXy.0}, we find that, in~${\mathcal{S}}\cap B_{1/2}$,
\begin{equation}\label{ojwqnd0436y546jonkmo24iv5mb60p7un203cn9o3}
w\ge-C_\star\,\eta\,I.\end{equation}
Moreover, using~\eqref{AOSJNbSs203tjghnt90o3rtg1qwsxfty678uhjo0wo1-2rjgb}
and either~\eqref{AOSJNbSs203tjghnt90o3rtg1qwsxfty678uhjo0wo1-2rjgb1}
or~\eqref{AOSJNbSs203tjghnt90o3rtg1qwsxfty678uhjo0wo1-2rjgb0}, we see that
\begin{equation}\label{92ikfnv39enYUNnt90o3rtg1qwsxfty678uhjo0wo1-2rjgb1}
\int_{{\mathcal{S}}\cap (B_\rho\setminus B_{\rho/2})}w(y)\,d{\mathcal{H}}^n_y
\ge\frac{c\,{r_\star^\beta}\,I\,\rho^{n+1+s-\beta}}{4}.
\end{equation}
Furthermore, by~\eqref{LAxIKNSDGTBSDLS0-23rfkLSLD}, for all~$x\in{\mathcal{S}}\cap B_{1/2}$,
\begin{equation}\label{LAxIKNSDGTBSDLS0-23rfkLSLD.01iprk4gmNSl}
|{\mathcal{L}}_{{\mathcal{S}},B_1}w(x)|\le 2M\int_{{\mathcal{S}}\cap (B_1\setminus B_{1/2})}u(y)\,d{\mathcal{H}}^n_y=2M\,I.\end{equation}

Notice also that the desired result in~\eqref{EAFYJLDIKSPpwk3mbm01LK-vvNMASP} is proved if we show that~$w\ge0$ in~${\mathcal{S}}\cap B_{\rho}$.
Hence, we argue by contradiction, supposing that there exists~$x_0\in{\mathcal{S}}\cap B_{\rho}$ such that~$w(x_0)<0$.

Then, given~$\lambda\in\R$, we consider the parabola
$$ P_\lambda(x):=-\frac{16\,C_\star\,\eta\,I}{\rho^2}|x-x_0|^2-\lambda.$$
When~$\lambda\ge C_\star\,\eta\,I$,
we have that~$P_\lambda\le -\lambda\le-C_\star\,\eta\,I\le w$
in~${\mathcal{S}}\cap B_{1/2}$, thanks to~\eqref{ojwqnd0436y546jonkmo24iv5mb60p7un203cn9o3}.
Also, if~$\lambda=0$, we have that~$P_\lambda(x_0)=0>w(x_0)$.
As a result, we find~$\lambda\in[0,C_\star\,\eta\,I]$ such that~$P_\lambda$ touches~$w$
in~${\mathcal{S}}\cap B_{1/2}$ from below at some point~$y_0$.

We also observe that, by~\eqref{ojwqnd0436y546jonkmo24iv5mb60p7un203cn9o3},
\begin{equation*} -C_\star\,\eta\,I\le w(y_0)=P_\lambda(y_0)=-\frac{16\,C_\star\,\eta\,I}{\rho^2}|y_0-x_0|^2-\lambda\le
-\frac{16\,C_\star\,\eta\,I}{\rho^2}|y_0-x_0|^2
\end{equation*}
and therefore
\begin{equation}\label{NHAEGD0-94vc238mJKbX}y_0\in B_{\rho/4}(x_0)\subseteq B_{2\rho}.\end{equation}

It is also helpful to observe that, by either~\eqref{olsmdcjp45oyhljXy.00} or~\eqref{olsmdcjp45oyhljXy.0},
\begin{eqnarray*}&&
\int_{{\mathcal{S}}\cap (B_1\setminus B_{1/2})}\frac{ w(y)-P_\lambda(y_0)}{|y_0-y|^{n+1+s}}\,d{\mathcal{H}}^n_y\ge
\int_{{\mathcal{S}}\cap (B_1\setminus B_{1/2})}\frac{ w(y)}{|y_0-y|^{n+1+s}}\,d{\mathcal{H}}^n_y\\&&\qquad\ge-\eta
\int_{{\mathcal{S}}\cap (B_1\setminus B_{1/2})}\frac{u(y)}{|y_0-y|^{n+1+s}}\,d{\mathcal{H}}^n_y\ge -\underline{C}\,\eta\,I,
\end{eqnarray*}for some~$\underline C>0$, depending only on~$n$, $s$, and~${{\mathcal{S}}}$.

Thus, in light of~\eqref{ojwqnd0436y546jonkmo24iv5mb60p7un203cn9o3},
\eqref{LAxIKNSDGTBSDLS0-23rfkLSLD.01iprk4gmNSl}, and~\eqref{NHAEGD0-94vc238mJKbX},
\begin{equation}\label{JOLASbjA.MSv6n78DPK-k3rf:0023edfvm.1}
\begin{split}
2MI&\ge{\mathcal{L}}_{{\mathcal{S}},B_1}w(y_0)\\&=
\int_{{\mathcal{S}}\cap B_1}\frac{ w(y)-P_\lambda(y_0)}{|y_0-y|^{n+1+s}}\,d{\mathcal{H}}^n_y\\&\ge
\int_{{\mathcal{S}}\cap B_{4\rho}(y_0)}\frac{ w(y)-P_\lambda(y_0)}{|y_0-y|^{n+1+s}}\,d{\mathcal{H}}^n_y\\&\qquad\qquad+
\int_{{\mathcal{S}}\cap (B_{1/2}\setminus B_{4\rho}(y_0))}\frac{ 
-C_\star\,\eta\,I+0}{|y_0-y|^{n+1+s}}\,d{\mathcal{H}}^n_y -\underline{C}\,\eta\,I\\&\ge
\int_{{\mathcal{S}}\cap B_{4\rho}(y_0)}\frac{ w(y)-P_\lambda(y)}{|y_0-y|^{n+1+s}}\,d{\mathcal{H}}^n_y\\&\qquad\qquad+
\int_{{\mathcal{S}}\cap B_{4\rho}(y_0)}\frac{ P_\lambda(y)-P_\lambda(y_0)}{|y_0-y|^{n+1+s}}\,d{\mathcal{H}}^n_y-
\frac{\widehat C\,C_\star\eta\,I}{\rho^{1+s}} -\underline{C}\,\eta\,I,
\end{split}\end{equation}
for some~$\widehat C>0$, depending only on~$n$, $s$, and~${{\mathcal{S}}}$.

We also recall that, as in~\eqref{qowjhdfnt034rgrbRF.3rtgrbfRb0pjmfewvoUhnhyshnd},
\begin{equation*}\left|
\int_{{\mathcal{S}}\cap B_{4\rho}(y_0)}\frac{ P_\lambda(y)-P_\lambda(y_0)}{|y_0-y|^{n+1+s}}\,d{\mathcal{H}}^n_y
\right|\le
\widehat C\,\rho^{1-s}\,
[P_\lambda]_{C^{1,1}( B_{4\rho}(y_0))}\le
\frac{\widehat C\,C_\star\,\eta\,I}{\rho^{1+s}},
\end{equation*}
up to freely renaming~$\widehat C>0$.

Combining this and~\eqref{JOLASbjA.MSv6n78DPK-k3rf:0023edfvm.1}, and observing that~$B_{4\rho}(y_0)\supseteq B _\rho$, up to renaming constants we infer that
\begin{equation}\label{ojsdn499rg.092rojrtg34gbm2vn4qec1qb6ftHHX2}\begin{split}
2MI+\frac{\widehat C\,C_\star\,\eta\,I}{\rho^{1+s}}+\underline{C}\,\eta\,I&\ge
\int_{{\mathcal{S}}\cap B_{4\rho}(y_0)}\frac{ w(y)-P_\lambda(y)}{|y_0-y|^{n+1+s}}\,d{\mathcal{H}}^n_y\\&
\ge
\int_{{\mathcal{S}}\cap (B_{\rho}\setminus B_{\rho/2})}\frac{ w(y)-P_\lambda(y)}{|y_0-y|^{n+1+s}}\,d{\mathcal{H}}^n_y.\end{split}
\end{equation}

Also, if~$y\in{\mathcal{S}}\cap (B_{\rho}\setminus B_{\rho/2})$,
$$ |y_0-y|\le|y_0|+|y|\le2\rho+|y|\le 5|y|.$$
Hence we deduce from~\eqref{ojsdn499rg.092rojrtg34gbm2vn4qec1qb6ftHHX2} that, up to renaming constants,
\begin{equation*}\begin{split}
\left(M+\frac{\widehat C\,C_\star\,\eta}{\rho^{1+s}}+\underline{C}\,\eta\right)I&\ge
\int_{{\mathcal{S}}\cap (B_{\rho}\setminus B_{\rho/2})}\frac{ w(y)-P_\lambda(y)}{|y|^{n+1+s}}\,d{\mathcal{H}}^n_y\\&\ge
\int_{{\mathcal{S}}\cap (B_{\rho}\setminus B_{\rho/2})}\frac{ w(y)}{|y|^{n+1+s}}\,d{\mathcal{H}}^n_y.\end{split}
\end{equation*}
This and~\eqref{92ikfnv39enYUNnt90o3rtg1qwsxfty678uhjo0wo1-2rjgb1} yield that, up to renaming constants,
\begin{equation*}
M+\frac{\widehat C\,C_\star\,\eta}{\rho^{1+s}}+\underline{C}\,\eta\ge\frac{c\,{r_\star^\beta}}{\rho^{\beta}}.
\end{equation*}
Hence, if~$\rho:=\left(\frac{c}{2M}\right)^{\frac1\beta}{r_\star}$, we conclude that
$$\frac{\widehat C\,C_\star\,\eta}{\rho^{1+s}}+\underline C\,\eta\ge M.$$
A contradiction now follows by choosing~$\eta$ suitably small
(possibly also in dependence of~$\rho$). This establishes the claim in~\eqref{EAFYJLDIKSPpwk3mbm01LK-vvNMASP}, as desired.

Also, the claim in~\eqref{EAFYJLDIKSPpwk3mbm01LK-vvNMASP.2} follows from~\eqref{EAFYJLDIKSPpwk3mbm01LK-vvNMASP.3}
and~\eqref{EAFYJLDIKSPpwk3mbm01LK-vvNMASP}.

We now deal with the proof of~\eqref{smwq908vr9043 85219-BGEAV:L-9143mvn5}. First, we point out that, by Lemma~\ref{SLKJmdwlegb.o3grlhjnm45tg}, for all~$x\in{\mathcal{S}}_\rho\cap B_{1/2}\cap\{u_\rho>0\}$,
\begin{equation}\label{09mcBUSLSi024rv83S59876}
\begin{split}&
{\mathcal{L}}_{{\mathcal{S}}_\rho,B_1} u_\rho(x)=
\int_{{\mathcal{S}}_\rho\cap B_1}\frac{ u_\rho(y)-u_\rho(x)}{|x-y|^{n+1+s}}\,d{\mathcal{H}}^n_y\\&\;
=\rho ^{1+s}\Bigg[\int_{{\mathcal{S}}\cap B_1}\frac{ u(y)-u(\rho x)}{|\rho x-y|^{n+1+s}}\,d{\mathcal{H}}^n_y
-\int_{{\mathcal{S}}\cap (B_1\setminus B_\rho)}\frac{ u(y)}{|\rho x-y|^{n+1+s}}\,d{\mathcal{H}}^n_y\\&\qquad\qquad
+u(\rho x)
\int_{{\mathcal{S}}\cap (B_1\setminus B_\rho)}\frac{d{\mathcal{H}}^n_y }{|\rho x-y|^{n+1+s}}\Bigg]\\&\;
=\rho ^{1+s}\Bigg[{\mathcal{L}}_{{\mathcal{S}},B_1} u(\rho x)
-\int_{{\mathcal{S}}\cap (B_1\setminus B_\rho)}\frac{ u(y)}{|\rho x-y|^{n+1+s}}\,d{\mathcal{H}}^n_y\\&\qquad\qquad
+u(\rho x)
\int_{{\mathcal{S}}\cap (B_1\setminus B_\rho)}\frac{d{\mathcal{H}}^n_y }{|\rho x-y|^{n+1+s}}\Bigg]
\end{split}
\end{equation}
and therefore
$${\mathcal{L}}_{{\mathcal{S}}_\rho,B_1} u_\rho(x)\le\rho ^{1+s}\Bigg[{\mathcal{L}}_{{\mathcal{S}},B_1} u(\rho x)
+0+u(\rho x)
\int_{{\mathcal{S}}\cap (B_1\setminus B_\rho)}\frac{d{\mathcal{H}}^n_y }{|\rho x-y|^{n+1+s}}\Bigg].$$

We also notice that
\begin{equation}\label{VATIKE094c1043NMmb098L7b8}
\rho^{1+s} 
\int_{{\mathcal{S}}\cap (B_1\setminus B_\rho)}\frac{d{\mathcal{H}}^n_y }{|\rho x-y|^{n+1+s}}
\le2^{n+1+s}\rho^{1+s} 
\int_{{\mathcal{S}}\cap (B_1\setminus B_\rho)}\frac{d{\mathcal{H}}^n_y }{|y|^{n+1+s}}\le\frac{\widehat{C}}2,\end{equation}
for a suitable choice of constants.

Hence, recalling~\eqref{LAxIKNSDGTBSDLS0-23rfkLSLD},
\begin{eqnarray*}&&
{\mathcal{L}}_{{\mathcal{S}}_\rho,B_1} u_\rho(x)\le\left(
\rho^{1+s} M+\frac{\widehat{C}}2\right)u_\rho( x),
\end{eqnarray*}
and the first claim in~\eqref{smwq908vr9043 85219-BGEAV:L-9143mvn5} follows
(up to relabeling constants).

Besides, we deduce from~\eqref{09mcBUSLSi024rv83S59876} that
\begin{equation}\label{0mec32VYPSv-4360mcv520}
\begin{split}&|
{\mathcal{L}}_{{\mathcal{S}}_\rho,B_1} u_\rho(x)|\\&\;\le
\rho ^{1+s}\Bigg[|{\mathcal{L}}_{{\mathcal{S}},B_1} u(\rho x)|
+\int_{{\mathcal{S}}\cap (B_1\setminus B_\rho)}\frac{ u(y)}{|\rho x-y|^{n+1+s}}\,d{\mathcal{H}}^n_y\\&\qquad\qquad
+u(\rho x)
\int_{{\mathcal{S}}\cap (B_1\setminus B_\rho)}\frac{d{\mathcal{H}}^n_y }{|\rho x-y|^{n+1+s}}\Bigg].
\end{split}
\end{equation}

In addition, by~\eqref{LAxIKNSDGTBSDLS0-23rfkLSLD} and Lemma~\ref{CONS1},
up to renaming constants,
\begin{equation}\label{0mec32VYPSv-4360mcv521}
\begin{split}
\rho ^{1+s}\Big(|{\mathcal{L}}_{{\mathcal{S}},B_1} u(\rho x)|
+|{\mathcal{L}}_{{\mathcal{S}},B_1} \overline v(\rho x)|\Big)
&\le\rho ^{1+s}
M\int_{{\mathcal{S}}\cap (B_1\setminus B_{1/2})}u(y)\,d{\mathcal{H}}^n_y\\&\le\frac{\rho ^{1+s+\beta}
M}{c\,r_\star^\beta}
\int_{{\mathcal{S}}\cap (B_{\rho}\setminus B_{\rho/2})} \frac{ u(y)}{|y|^{n+1+s}}\,d{\mathcal{H}}^n_y\\&
\le\frac{\widehat{C}\rho ^{\beta-n}M}{r_\star^\beta}
\int_{{\mathcal{S}}\cap (B_{\rho}\setminus B_{\rho/2})} u(y)\,d{\mathcal{H}}^n_y\\&=\frac{\widehat{C}\rho ^{\beta}M}{r_\star^\beta}
\int_{{\mathcal{S}}_\rho\cap (B_1\setminus B_{1/2})} u_\rho(y)\,d{\mathcal{H}}^n_y.
\end{split}
\end{equation}

Furthermore, recalling Lemma~\ref{PplemdfltX34tyh.2} and renaming constants when necessary,
\begin{equation}\label{0mec32VYPSv-4360mcv522}
\begin{split}&\rho ^{1+s}
\int_{{\mathcal{S}}\cap (B_1\setminus B_\rho)}\frac{ u(y)}{|\rho x-y|^{n+1+s}}\,d{\mathcal{H}}^n_y\le2^{n+1+s}\rho ^{1+s}
\int_{{\mathcal{S}}\cap (B_1\setminus B_\rho)}\frac{ u(y)}{|y|^{n+1+s}}\,d{\mathcal{H}}^n_y\\&\quad\le
C_\star \rho ^{1+s}\int_{{\mathcal{S}}\cap (B_{\rho}\setminus B_{\rho/2})} \frac{ u(y)}{|y|^{n+1+s}}\,d{\mathcal{H}}^n_y\le
\frac{C_\star}{ \rho ^n}\int_{{\mathcal{S}}\cap (B_{\rho}\setminus B_{\rho/2})} u(y)\,d{\mathcal{H}}^n_y\\&\quad=
{C_\star}\int_{{\mathcal{S}}_\rho\cap (B_{1}\setminus B_{1/2})} u_\rho(y)\,d{\mathcal{H}}^n_y.
\end{split}
\end{equation}

Moreover, by~\eqref{olsmdcjp45oyhljXy}, 
$$u(\rho x)\le\sup_{B_{\rho/2}}u\le \frac{C_\star}{\rho^n}\,\int_{{\mathcal{S}}\cap (B_{\rho}\setminus B_{\rho/2})}u(y)\,d{\mathcal{H}}^n_y={C_\star}\,\int_{{\mathcal{S}}_\rho\cap (B_{1}\setminus B_{1/2})}u_\rho(y)\,d{\mathcal{H}}^n_y.$$
This and~\eqref{VATIKE094c1043NMmb098L7b8} yield that
\begin{equation}\label{0mec32VYPSv-4360mcv521.njal} \rho ^{1+s}u(\rho x)
\int_{{\mathcal{S}}\cap (B_1\setminus B_\rho)}\frac{d{\mathcal{H}}^n_y }{|\rho x-y|^{n+1+s}}\le
C_\star\,\widehat{C}\,\int_{{\mathcal{S}}_\rho\cap (B_{1}\setminus B_{1/2})}u_\rho(y)\,d{\mathcal{H}}^n_y.\end{equation}

Gathering this, \eqref{0mec32VYPSv-4360mcv520}, \eqref{0mec32VYPSv-4360mcv521}, and~\eqref{0mec32VYPSv-4360mcv522},
we conclude that\begin{equation}\label{890.o304vm9KSL:DC}|
{\mathcal{L}}_{{\mathcal{S}}_\rho,B_1} u_\rho(x)|\le\left(\frac{\widehat{C}\rho ^{\beta}M}{r_\star^\beta}+C_\star+C_\star\,\widehat{C}
\right)
\int_{{\mathcal{S}}_\rho\cap (B_1\setminus B_{1/2})} u_\rho(y)\,d{\mathcal{H}}^n_y.\end{equation}

Now, we observe that, by~\eqref{09mcBUSLSi024rv83S59876}, in case~1,
\begin{equation*}
\begin{split}&
{\mathcal{L}}_{{\mathcal{S}}_\rho,B_1} v_\rho(x)
=\frac{\rho ^{1+s}}{1-\eta}\Bigg[{\mathcal{L}}_{{\mathcal{S}},B_1} (\overline v-\eta u)(\rho x)\\&\qquad
-\int_{{\mathcal{S}}\cap (B_1\setminus B_\rho)}\frac{ (\overline v-\eta u)(y)}{|\rho x-y|^{n+1+s}}\,d{\mathcal{H}}^n_y
+(\overline v-\eta u)(\rho x)
\int_{{\mathcal{S}}\cap (B_1\setminus B_\rho)}\frac{d{\mathcal{H}}^n_y }{|\rho x-y|^{n+1+s}}\Bigg]
\end{split}\end{equation*}
and a similar identity holds for case~2, leading to
\begin{equation*}
\begin{split}&|
{\mathcal{L}}_{{\mathcal{S}}_\rho,B_1} v_\rho(x)|
\le\frac{\rho ^{1+s}}{1-\eta}\Bigg[|{\mathcal{L}}_{{\mathcal{S}},B_1} \overline v (\rho x)|
+|{\mathcal{L}}_{{\mathcal{S}},B_1} u(\rho x)|
\\&\qquad
+\int_{{\mathcal{S}}\cap (B_1\setminus B_\rho)}\frac{\overline v(y)+ u(y)}{|\rho x-y|^{n+1+s}}\,d{\mathcal{H}}^n_y
+\big(\overline v(\rho x)+u(\rho x)\big)
\int_{{\mathcal{S}}\cap (B_1\setminus B_\rho)}\frac{d{\mathcal{H}}^n_y }{|\rho x-y|^{n+1+s}}\Bigg]\\&
\le\frac{2\rho ^{1+s}}{1-\eta}\Bigg[|{\mathcal{L}}_{{\mathcal{S}},B_1} \overline v (\rho x)|
+|{\mathcal{L}}_{{\mathcal{S}},B_1} u(\rho x)|
\\&\qquad
+\int_{{\mathcal{S}}\cap (B_1\setminus B_\rho)}\frac{ u(y)}{|\rho x-y|^{n+1+s}}\,d{\mathcal{H}}^n_y
+u(\rho x)
\int_{{\mathcal{S}}\cap (B_1\setminus B_\rho)}\frac{d{\mathcal{H}}^n_y }{|\rho x-y|^{n+1+s}}\Bigg]
\end{split}\end{equation*}

We thus use again~\eqref{0mec32VYPSv-4360mcv521}, \eqref{0mec32VYPSv-4360mcv522}, and~\eqref{0mec32VYPSv-4360mcv521.njal} and we find that
\begin{equation*}|
{\mathcal{L}}_{{\mathcal{S}}_\rho,B_1} v_\rho(x)|\le2\left(\frac{\widehat{C}\rho ^{\beta}M}{r_\star^\beta}+C_\star+C_\star\,\widehat{C}
\right)
\int_{{\mathcal{S}}_\rho\cap (B_1\setminus B_{1/2})} u_\rho(y)\,d{\mathcal{H}}^n_y.\end{equation*}
Combing this with~\eqref{890.o304vm9KSL:DC}, we complete the proof of the second claim in~\eqref{smwq908vr9043 85219-BGEAV:L-9143mvn5}.
\end{proof}

\subsection{Regularity theory for coupled equations}

The next important ingredient for the~$C^{1,\alpha}$-regularity of the boundary trace
is to consider how the ratio between the vertical component of the
normal and the tangential components
improves at a smaller scale. To capture this phenomenon, one needs
a regularity theory for the solutions of two similar, but slightly different equations.
These equations need to encode the properties of the different normal components,
but also to maintain a convenient structure to allow iteration at smaller scales.
The coefficients of the equation also need to improve during the iteration,
allowing for enhanced controls of the ratio oscillation in terms of a universal quantity.

\begin{proposition}\label{oqX3dc4grjsd9ouy094h6ypok45otykh}
Let~$\alpha\in(s,1]$ and~${\mathcal{S}}\subseteq\R^{n+1}$ be a bounded, connected hypersurface of class~$C^{1,\alpha}$
such that~$0\in{\mathcal{S}}$. 

Given a set~$V\subseteq\R^{n+1}$, let
$${\mathcal{L}}_{{\mathcal{S}},V }f(x):=\int_{{\mathcal{S}}\cap V}\frac{ f(y)-f(x)}{|x-y|^{n+1+s}}\,d{\mathcal{H}}^n_y.$$

Let~$u$, $ v:{\mathcal{S}}\to\R$, with\begin{equation}\label{odjl903plf-p3tgbXmo2rjfg}
{\mbox{$0\le v\le u$ in~${\mathcal{S}}\cap B_1$.}}\end{equation}

Assume that, in~${\mathcal{S}}\cap B_{1/2}\cap\{u>0\}$,
\begin{equation}\label{OJScvns.31.3dfSOJ02019475}
\begin{split}
&{\mathcal{L}}_{{\mathcal{S}},B_1}u\le M\, u\\
{\mbox{and }}\quad&|{\mathcal{L}}_{{\mathcal{S}},B_1} u|+|{\mathcal{L}}_{{\mathcal{S}},B_1} v|\le M\int_{{\mathcal{S}}\cap (B_1\setminus B_{1/2})}u(y)\,d{\mathcal{H}}^n_y,
\end{split}
\end{equation}
for some~$M>0$.

Assume additionally that there exists~$\delta\in\left(0,\frac1{10}\right)$ such that,
for every~$r\in\left(0,\frac12\right)$,
\begin{equation*}
{\mbox{there exists~$z_r\in{\mathcal{S}}\cap (B_{r/4}\setminus B_{r/8})$
such that~${\mathcal{S}}\cap B_{\delta r}(z_r)\subseteq\{u>0\}$.}}\end{equation*}

Then, there exist~$\eta$, $\rho\in(0,1)$ (depending only on~$n$, $s$, ${{\mathcal{S}}}$, $\delta$, and~$C$,
but independent of~$M$)
and~$\eta_0$, $\rho_0\in(0,1)$, (depending only on~$n$, $s$, ${{\mathcal{S}}}$, $\delta$, $C$,
and~$M$), such that, for all~$k\in\N$, in~${\mathcal{S}}\cap B_{\rho_0\rho^k}$ we have that
\begin{equation}\label{ojsd34otiykh6Yy89j32d57} a_ku\le v\le b_ku,\end{equation}
for some~$a_k$, $b_k\in[0,1]$, satisfying~$b_k\ge a_k$, $a_k$ non-decreasing, $b_k$ non-increasing, and
$$ b_{k}-a_k= (1-\eta_0)\,(1-\eta)^{k}.$$
\end{proposition}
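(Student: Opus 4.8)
The key observation is that the conclusion of Lemma~\ref{LEM5.1} produces, after rescaling by the universal factor $\rho$, a new pair $(u_\rho, v_\rho)$ satisfying \emph{exactly the same structural hypotheses} as the original pair: the ordering $0\le v_\rho\le u_\rho$ in ${\mathcal{S}}_\rho\cap B_1$ (this is~\eqref{EAFYJLDIKSPpwk3mbm01LK-vvNMASP.2}), and the pair of inequalities~\eqref{smwq908vr9043 85219-BGEAV:L-9143mvn5} which are the analogues of~\eqref{OJScvns.31.3dfSOJ02019475} with $M$ replaced by the \emph{universal} constant $\widehat{C}$. The only delicate point in setting up the iteration is the non-degeneracy hypothesis on $u$: Lemma~\ref{LEM5.1} requires, for every $r\in(0,1/2)$, a point $z_r\in{\mathcal{S}}\cap(B_{9r/10}\setminus B_{7r/10})$ with ${\mathcal{S}}\cap B_{\delta r}(z_r)\subseteq\{u>0\}$, whereas here we are given the hypothesis with $z_r\in{\mathcal{S}}\cap(B_{r/4}\setminus B_{r/8})$. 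This mismatch in annulus location is intentional: applying the rescaling $x\mapsto \rho x$ (with $\rho$ small, roughly of order $1/8$ or smaller after absorbing constants) moves the annulus $B_{r/4}\setminus B_{r/8}$ outward in the rescaled picture, and one checks that for the appropriate choice of $\rho$ the rescaled non-degeneracy set lands in the annulus $B_{9r/10}\setminus B_{7r/10}$ required by Lemma~\ref{LEM5.1}. Moreover this property is scale-invariant: since it holds for \emph{all} $r\in(0,1/2)$ for $u$, it continues to hold for $u_\rho$, so the iteration can proceed indefinitely.

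\textbf{The iteration proceeds as follows.} First I would apply Lemma~\ref{LEM5.1} once to the original pair $(u,v)$ (after the harmless preliminary rescaling by a fixed factor $\rho_0$, depending on $M$, that converts the hypothesis on $z_r$ into the form Lemma~\ref{LEM5.1} expects and arranges the first inclusion). This produces $\eta\in(0,1/100)$ and $\rho\in(0,1/100)$, both universal (independent of $M$), together with the dichotomy~\eqref{EAFYJLDIKSPpwk3mbm01LK-vvNMASP}: in ${\mathcal{S}}\cap B_\rho$ either $\eta u\le v$ (case~1) or $v\le(1-\eta)u$ (case~2). In case~1, set $a_1:=\eta$, $b_1:=1$; in case~2, set $a_1:=0$, $b_1:=1-\eta$. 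Either way $b_1-a_1=1-\eta$. We then pass to the rescaled pair $(u_\rho,v_\rho)$ defined in the lemma: it satisfies~\eqref{EAFYJLDIKSPpwk3mbm01LK-vvNMASP.2} and~\eqref{smwq908vr9043 85219-BGEAV:L-9143mvn5} with the universal constant $\widehat{C}$ in place of $M$. Now apply Lemma~\ref{LEM5.1} \emph{again} to $(u_\rho,v_\rho)$ with $M:=\widehat{C}$; this time the output constants $\eta$, $\rho$ do not change (they depend only on $n$, $s$, $\alpha$, the regularity of ${\mathcal{S}}$, $\delta$, $C$, and $\widehat{C}$, which is itself universal), so the iteration is \emph{self-improving with a fixed geometric rate}. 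At step $k$ we obtain, in ${\mathcal{S}}\cap B_{\rho_0\rho^k}$, that $a_ku\le v\le b_ku$ where the sequence $a_k$ is built by $a_{k}=a_{k-1}+(b_{k-1}-a_{k-1})\eta$ in case~1 (and $a_k=a_{k-1}$ in case~2) and symmetrically for $b_k$, so that $b_k-a_k=(b_{k-1}-a_{k-1})(1-\eta)$; after accounting for the initial $\rho_0$-step one gets $b_k-a_k=(1-\eta_0)(1-\eta)^k$ for a suitable $\eta_0\in(0,1)$ depending on $M$. The monotonicity claims ($a_k$ non-decreasing, $b_k$ non-increasing, $b_k\ge a_k$, both in $[0,1]$) are immediate from the update rule and the fact that the oscillation shrinks.

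\textbf{Bookkeeping the hypotheses across the iteration.} The one point that requires genuine care is verifying that at each step the non-degeneracy hypothesis of Lemma~\ref{LEM5.1} is met. At step $k$ the relevant function is $u_{\rho^k}$ (the $k$-fold rescaling of $u$ by $\rho$, modulo the normalization constants in $v_\rho$ that do not affect $u_\rho$), and one must produce, for every $r\in(0,1/2)$, a point $z_r\in{\mathcal{S}}_{\rho^k}\cap(B_{9r/10}\setminus B_{7r/10})$ with a $\delta r$-neighborhood inside $\{u_{\rho^k}>0\}$. Translating back, this asks for $z\in{\mathcal{S}}\cap(B_{9\rho^kr/10}\setminus B_{7\rho^kr/10})$ with $u>0$ on ${\mathcal{S}}\cap B_{\delta\rho^kr}(z)$, which is exactly the given hypothesis on $u$ evaluated at the radius $\rho^k r<1/2$ (after the mild relocation of the annulus absorbed by the choice of $\rho$, $\rho_0$). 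Thus the hypothesis reproduces itself at every scale, and there is no loss of $\delta$. \textbf{The main obstacle} is precisely this bookkeeping --- matching the annulus positions in the non-degeneracy condition after rescaling, and checking that the constant $\widehat{C}$ delivered by Lemma~\ref{LEM5.1} is the one consumed by the next application so that $\eta$ and $\rho$ stabilize --- rather than any new analytic estimate; all the hard work (the geometric Harnack inequality, the annular growth from Lemma~\ref{CONS1}, the commutator bounds) has already been done in the lemmas we are allowed to invoke.
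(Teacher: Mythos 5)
Your proposal takes essentially the same route as the paper's proof, which also iterates Lemma~\ref{LEM5.1} by passing at each step to the normalized function $\widetilde v := (v - a_k u)/(b_k - a_k)$ (this is your $v_\rho$ after $k$ rescalings), with the crucial observation in both arguments being that the rescaled pair satisfies the structural hypotheses with the universal constant $\widehat C$ in place of $M$, so that $\eta$ and $\rho$ stabilize after the first application.

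One claim you make, however, does not hold up: you write that the preliminary rescaling ``converts the hypothesis on $z_r$ into the form Lemma~\ref{LEM5.1} expects,'' moving the annulus from $B_{r/4}\setminus B_{r/8}$ into $B_{9r/10}\setminus B_{7r/10}$. A dilation cannot accomplish this. The non-degeneracy condition ``for every $r\in(0,1/2)$ there exists $z_r\in{\mathcal S}\cap(B_{r/4}\setminus B_{r/8})$ with ${\mathcal S}\cap B_{\delta r}(z_r)\subseteq\{u>0\}$'' is scale-invariant exactly as written: if $u$ satisfies it, then $u(\rho\,\cdot)$ satisfies the very same statement, with the annulus at each radius $r$ still of the form $B_{r/4}\setminus B_{r/8}$, since dilating preserves the ratio $2$ of outer to inner radius; this ratio cannot become the ratio $9/7$ appearing in Lemma~\ref{LEM5.1}. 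The two forms of the hypothesis are therefore genuinely distinct, and the mechanism you propose for reconciling them is wrong. To be fair, the paper's own proof also applies Lemma~\ref{LEM5.1} inside this proposition without addressing the mismatch, so this appears to be an inconsistency in the paper's statements rather than a deeper obstruction; the correct repair is to re-run Lemma~\ref{PplemdfltX34tyh.2} with the annulus $B_{r/4}\setminus B_{r/8}$ and carry the resulting modified constants through (the conclusion then involves a slightly different annulus), not to invoke a rescaling. Apart from this misdiagnosis, your account of the base step, the update rule for $a_k$, $b_k$, and the origin of the factor $1-\eta_0$ all agree with the paper.
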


\begin{proof} We use Lemma~\ref{LEM5.1} (with~$\overline v:=v$ here)
and we see that, \begin{equation}\label{PRImAas}
{\mbox{in~${\mathcal{S}}\cap B_{\rho_0}$,
either~$\eta_0 u\le v\le u$ or~$0\le v\le(1-\eta_0)u$.}}\end{equation}

Now we argue recursively. The basis of the induction is supplied by~\eqref{PRImAas},
which gives the desired result with either~$a_{0}:=\eta_0$ and~$b_{0}:=1$,
or~$a_{0}:=0$ and~$b_{0}:=1-\eta_0$. In any case, $b_{0}-a_{0}=1-\eta_0$,
and this provides the basis of the induction argument.\footnote{The parameters~$\eta_0$ and~$\rho_0$ are implicitly assumed to be bounded by structural constants, whenever convenient. \label{SMARH}
As a notational remark,
we are using here the ``$0$'' subscript in the parameters~$\eta_0$
and~$\rho_0$ to emphasize their dependence on~$M$, interestingly
this dependence only takes place in the first step of the iteration. Indeed,
the parameters~$\eta$
and~$\rho$ in the next steps of the iteration will come from Lemma~\ref{LEM5.1},
used with~$M:=\widehat C$, for a structural constant~$\widehat C$.
The fact that~$\eta$ can be chosen independently of~$M$ is essential to our construction. Indeed, rescaling the equation on a small ball produces a large factor (the radius raised to the power~$-1-s$), which would otherwise become uncontrollable in the iteration scheme.}

Now we suppose that the desired result is true for the index~$k\in\N$ and we aim
at proving it for the index~$k+1$. To this end, we\footnote{This proof would have
worked equally well if we defined instead~$\widetilde v:=\frac{b_k u-v}{b_k-a_k}$.} define~$\widetilde v:=\frac{v-a_k u}{b_k-a_k}$.
We point out that~$0\le\widetilde v\le u$ in~${\mathcal{S}}\cap B_{\rho_0\rho^{k}}$, due to the inductive hypothesis in~\eqref{ojsd34otiykh6Yy89j32d57}.

Let also~$\overline{v}(x):=\widetilde{v}(\rho_0\rho^{k}x)$
and~$\overline{u}(x):=u(\rho_0\rho^{k}x)$.
By~\eqref{EAFYJLDIKSPpwk3mbm01LK-vvNMASP.2},
we know that~$0\le\overline{v}\le\overline u$
in~${\mathcal{S}}_k\cap B_{1}$, being~${\mathcal{S}}_k$ the
zoom-in of~${\mathcal{S}}$ by a factor of~$
\rho_0\rho^{k}$ (in particular, the $C^{1,\alpha}$-regularity of~${\mathcal{S}}_k$ is controlled by that of~${\mathcal{S}}$).

Additionally,  by~\eqref{smwq908vr9043 85219-BGEAV:L-9143mvn5},
in~${\mathcal{S}}_k\cap B_{1/2}\cap\{\overline u>0\}$,
\begin{equation*}
\begin{split}&
{\mathcal{L}}_{{\mathcal{S}}_k,B_1} \overline u\le \widehat{C}\,\overline u\\
{\mbox{and }}\quad&
|{\mathcal{L}}_{{\mathcal{S}}_k,B_1} \overline u|+|{\mathcal{L}}_{{\mathcal{S}}_k,B_1} \overline v|\le \widehat{C}\int_{{\mathcal{S}}_k\cap (B_1\setminus B_{1/2})}\overline u(y)\,d{\mathcal{H}}^n_y.\end{split}\end{equation*}

On this account, we can use Lemma~\ref{LEM5.1} (now with~$M:=\widehat C$, as anticipated
in footnote~\ref{SMARH}), concluding that, in~${\mathcal{S}}\cap B_{\rho}$,
either~$\eta \overline u\le \overline v$ or~$\overline v\le(1-\eta)\overline u$.
This leads to the completion of the inductive step
(in the first case, by defining~$a_{k+1}:=a_k+\eta(b_k-a_k)$ and~$b_{k+1}:=b_k$,
in  the second case, by defining~$a_{k+1}:=a_k$ and~$b_{k+1}:=b_k-
\eta(b_k-a_k)$).
\end{proof}

\subsection{$C^{1,\gamma}$-regularity of the
trace of nonlocal minimal surfaces at points of stickiness}

The goal of this section is to use the material developed so far to complete
the proof of Theorem~\ref{THM1}.

\begin{proof}[Proof of Theorem~\ref{THM1}]
Let~$x_0$ be as in the statement
of Theorem~\ref{THM1}. Up to a dilation,
we can suppose that 
$$ \lim_{\Omega\ni x'\to x_0} u_E(x')\ge4+\lim_{\R^n\setminus\Omega\ni x'\to x_0} u_0(x').$$
Hence, possibly up to an additional dilation,
we can suppose that for all~$x_\star\in \Sigma\cap B_1(x_0)$
we have that~$\Sigma\cap B_2(x_\star)$ is
a connected hypersurface of class~$C^{1,\frac{1+s}2}$, see Figure~\ref{f8103pra3g5e7.1}.

\begin{figure}[htbp]
        \centering
         \includegraphics[width=0.35\linewidth]{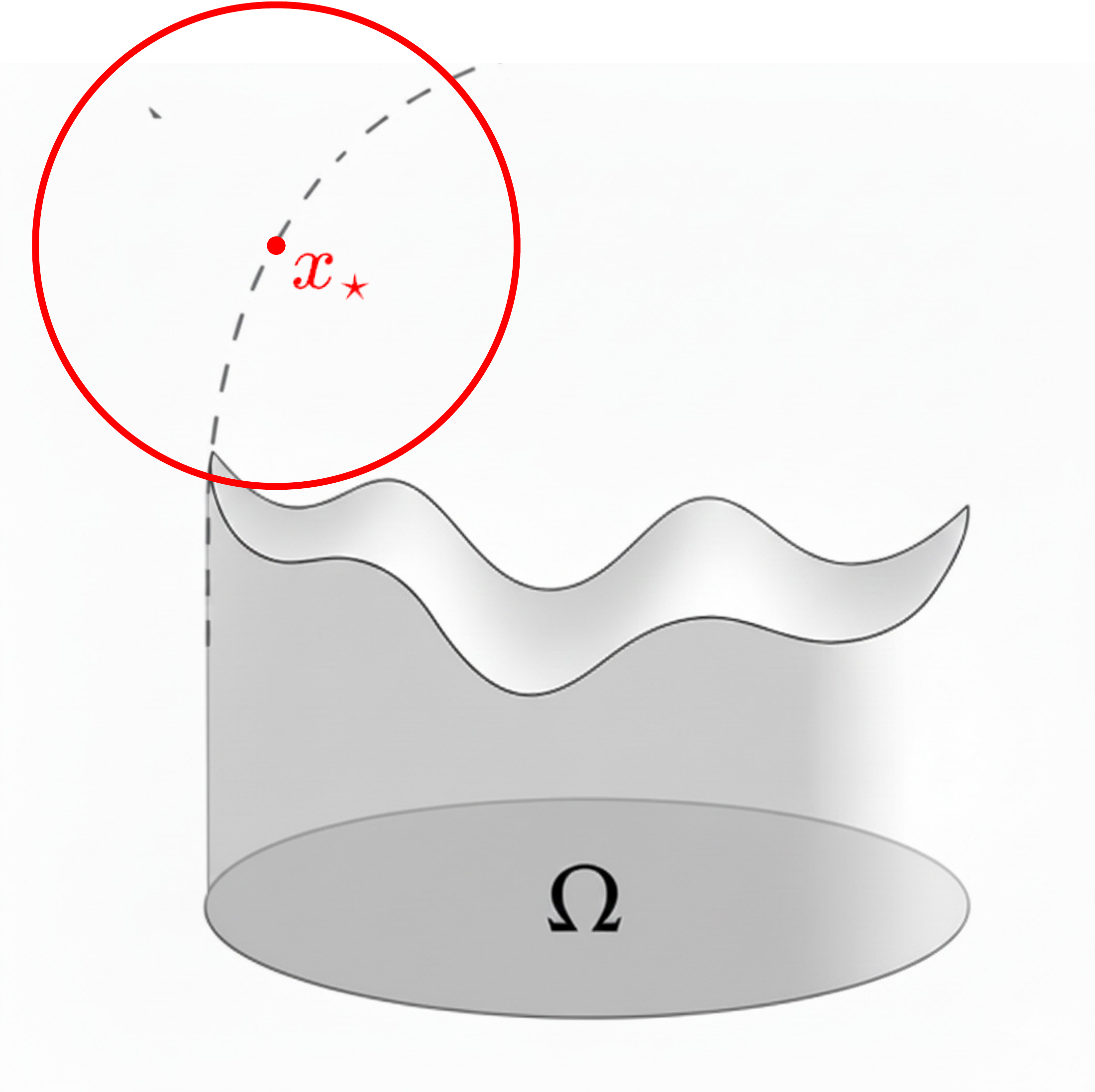}$\qquad\qquad$
        \includegraphics[width=0.35\linewidth]{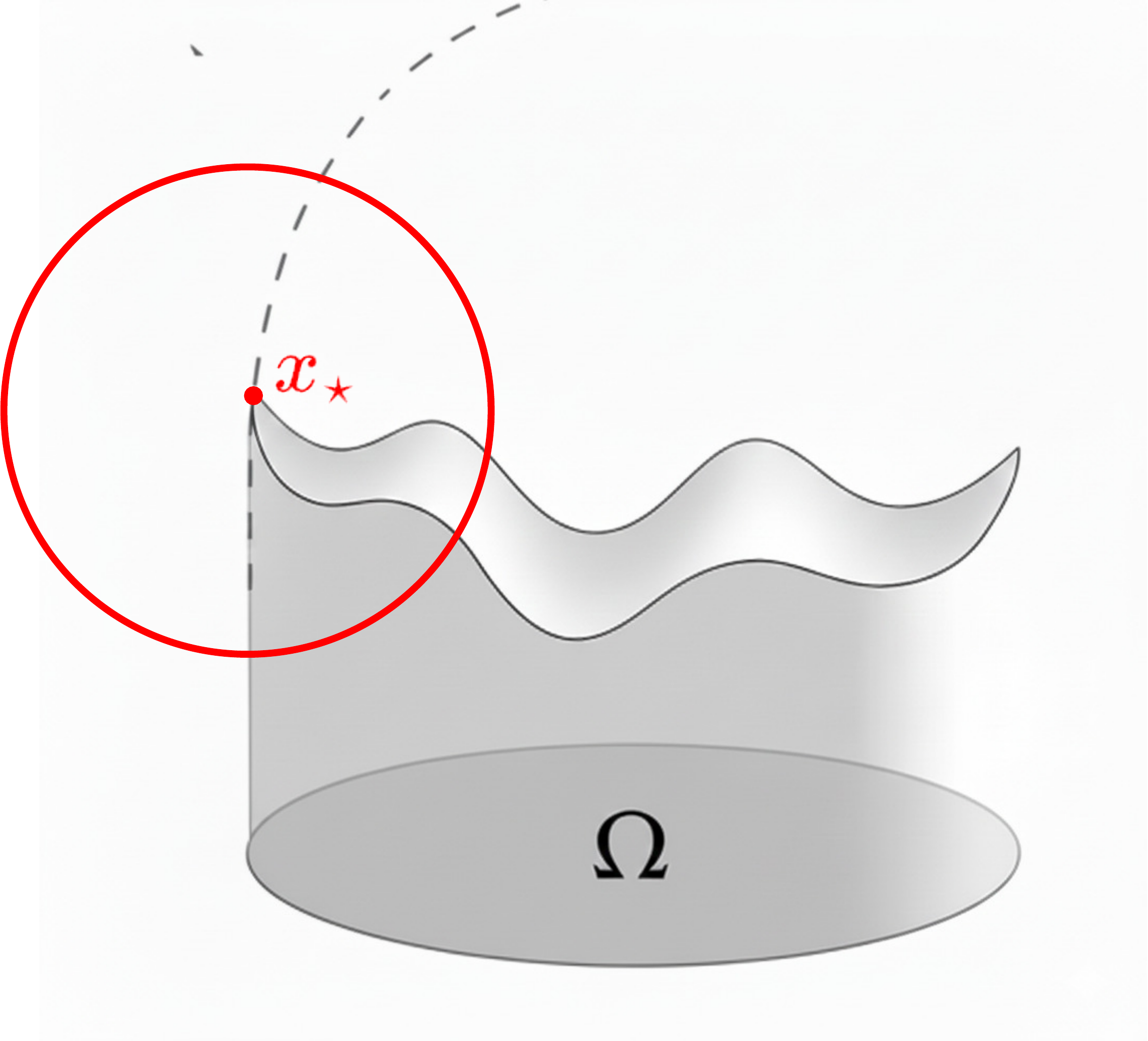}
        \caption{\footnotesize\sl Sketch of the position of the point~$x_\star$ to obtain
        uniform oscillation bounds for the normal ratio up to the boundary.}
        \label{f8103pra3g5e7.1}
    \end{figure}

This allows us to use Proposition~\ref{oqX3dc4grjsd9ouy094h6ypok45otykh} with~$u:=2C\nu_{n+1}$ and~$v:=C\nu_{n+1}+\tau\cdot\nu$,
with~$C$ as in Corollary~\ref{ojsndofkw34rtgoCIr7} (this entails that~$0\le v\le u$ in this setting).
In this way, we
conclude that, for large~$k$ and suitable~$C_\star>0$ and~$\epsilon\in(0,1)$,
$$ \mathop{\mathrm{osc}}\limits_{B_{2^{-k}(x_\star)}}\frac{C\nu_{n+1}+\tau\cdot\nu}{2C\nu_{n+1}}\le C_\star\,(1-\epsilon)^k$$
and therefore
$$ \mathop{\mathrm{osc}}\limits_{B_{2^{-k}(x_\star)}}\frac{\tau\cdot\nu}{\nu_{n+1}}\le 2CC_\star\,(1-\epsilon)^k.$$

This estimate 
guarantees the H\"older decay of the oscillation of~$\frac{\tau\cdot\nu}{\nu_{n+1}}$ at the point~$x_\star$
(with H\"older exponent~$\log_2\frac1{1-\epsilon}$). Notice that the estimate obtained
is uniform in~$x_\star$ up to the boundary, yielding that~$\frac{\tau\cdot\nu}{\nu_{n+1}}$ is H\"older
continuous in a neighborhood of any point of stickiness~$x_0$.

The desired result now follows from Lemma~\ref{L2sw2I2C32rA}.
\end{proof}

\section{Differentiability of nonlocal minimal graphs at points of stickiness and proof of Theorem~\ref{2swa1l2e4oj2dn2fl3er4t5e4n2v2}}\label{M324r3tgRguGJKHDOJCDL417872152}

As discussed in~\cite[page~127]{MR4178752},
the main obstacle in proving the differentiability of nonlocal minimal graphs at points of stickiness is to avoid vertical tangents of the trace, since one needs to consider
points of the trace at which the gradient is maximal.
When~$n=2$, the matter was settled in~\cite{MR4178752}
by looking at conical blow-up limits,
because the conical trace would
only identify a single slope in the plane, and one needs in this case
only to exclude ``completely vertical'' blow up limits.
When~$n\ge3$, the situation is, in principle, significantly
more complex, since a vertical tangency of the trace at a single point
would produce an infinite slope without the limit cone becoming completely
vertical. In this spirit, the role of Theorem~\ref{THM1} is precisely
to exclude vertical tangents in the trace (plus,
to ensure sufficient compactness).

As a matter of fact (see the proof of Theorem~1.4 in~\cite{MR4178752}),
Theorem~\ref{2swa1l2e4oj2dn2fl3er4t5e4n2v2} would follow once we prove the following
result for cones:

\begin{theorem}[Triviality
of nonlocal minimal cones in halfspaces]\label{posjwxhe0itvnu945opmbiytv9cmir23Hnq72Gnm2yF}
Let~$u_\star:\R^n\to\R$ be an $s$-minimal graph in~$\R^{n-1}\times(0,+\infty)$. 

Assume that~$u_\star(x'',x_n)=0$ for all~$(x'',x_n)\in\R^{n-1}\times(-\infty,0)$ and that~$u_\star$
is positively homogeneous of degree~$1$ (i.e., $u_\star(tx')=tu_\star(x')$ for every~$x'\in\R^n$
and~$t>0$).


Then $u_\star(x')=0$ for all~$x'\in\R^n$.
\end{theorem}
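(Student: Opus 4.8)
The plan is to reduce the triviality of the homogeneous $s$-minimal graph $u_\star$ in the halfspace $\R^{n-1}\times(0,+\infty)$ to the absence of vertical tangents of the trace along $\{x_n=0\}$, which is precisely the content (via blow-up) of Theorem~\ref{THM1} together with the Lipschitz estimate of Corollary~\ref{ojsndofkw34rtgoCIr7}. Since $u_\star$ is positively $1$-homogeneous and vanishes on $\{x_n<0\}$, the associated subgraph $E_\star:=\{x_{n+1}<u_\star(x')\}$ is a cone in $\R^{n+1}$ with vertex at the origin, and the origin is a boundary point of the reference cylinder $\mathcal C=\{x_n>0\}\times\R$. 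Because $u_\star$ is a global $s$-minimal graph with zero exterior datum on the complementary halfspace, its boundary trace along $\partial\mathcal C$ contains the line $\ell:=\{0\}^{n-1}\times\{x_n=0\}\times\R$ restricted to the interval $[0,\sup u_\star]$ in the last coordinate, glued to the flat piece $\{x_{n+1}=0\}$ coming from the exterior datum: the jump of $u_\star$ at $0$ (if positive) is exactly the stickiness height at the vertex of the cone.

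First I would record the structural consequences of homogeneity: the hypersurface $\Sigma_\star:=\partial E_\star$ is invariant under the dilations $x\mapsto tx$, so in particular the normal $\nu_\star$ is $0$-homogeneous, i.e.\ constant along rays from the origin; and $\Sigma_\star$ is a graph over $\{x_n\ge0\}$ with $\nu_{\star,n+1}\ge0$. Second, I would apply Corollary~\ref{ojsndofkw34rtgoCIr7} at the vertex $0\in\Sigma_\star\cap\partial\mathcal C$ with $\tau$ ranging over the coordinate tangent fields $e_1,\dots,e_{n-1}$ and $e_{n+1}$ along $\partial\mathcal C=\{x_n=0\}\times\R$: this gives $|\tau\cdot\nu_\star|\le C\,\nu_{\star,n+1}$ on $\Sigma_\star\cap B_{1/2}$, and by the $0$-homogeneity of $\nu_\star$ this bound in fact propagates to all of $\Sigma_\star$. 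The point of this is that it forbids the normal from becoming purely horizontal in the $x_n$-free directions: the only way $\nu_\star$ can rotate toward the horizontal is by tilting into the $e_n$ direction, i.e.\ the trace of $\Sigma_\star$ has no vertical tangents except possibly those ``pointing out of the domain''. Third, I would promote this to the $C^{1,\gamma}$ regularity of the trace $u_\star(\partial\Omega)=u_\star(\{x_n=0\})$ near the vertex via Theorem~\ref{THM1} (its hypotheses — $\Omega$ of class $C^{2,1}$ near the point, which is trivially satisfied since $\partial\Omega$ is the hyperplane $\{x_n=0\}$, and a nontrivial jump at the point — hold in the stickiness case), so near $0$ the set $\Sigma_\star\cap\partial\mathcal C$ is a genuine $(n-1)$-dimensional $C^{1,\gamma}$ graph.

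The final step is a rigidity argument for cones. Once the trace is a $C^{1,\gamma}$ hypersurface through the vertex of the cone and, simultaneously, the cone is scale-invariant, the trace must itself be a cone through the same vertex, hence a hyperplane (an $(n-1)$-dimensional $C^1$ cone is linear). Combined with the matching to the flat exterior datum $\{x_{n+1}=0\}$ along $\{x_n=0\}$, and with the one-sided bound $u_\star\ge0$ on $\{x_n>0\}$ forced by comparison with the zero datum (or with the half-plane competitor), this pins the trace down to the horizontal hyperplane $\{x_n=0,\ x_{n+1}=0\}$; that is, the jump at the vertex is zero and $u_\star$ extends continuously by $0$ across $\{x_n=0\}$. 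Then the origin is a point of \emph{boundary continuity}, and I would conclude either by invoking the differentiability estimate \eqref{2swa1l2e4oj2dn2fl3er4t5e4n2v1} (which at a $1$-homogeneous solution forces $u_\star\equiv0$, since $|u_\star(x')|\le C|x'|^{(3+s)/2}$ and $|u_\star(x')|=|x'|\,u_\star(x'/|x'|)$ are incompatible unless $u_\star\equiv0$), or directly by the classical dimension-reduction / maximum-principle argument for $s$-minimal graphical cones with continuous zero boundary data. The main obstacle I expect is the second step: controlling the normal of the \emph{cone} uniformly up to and including the vertex, and in particular ruling out that the stickiness height at the vertex is positive — this is exactly where the ``no vertical tangents'' output of Theorem~\ref{THM1} does the essential work, and where the homogeneity has to be leveraged to upgrade the local Harnack-type estimates of Corollary~\ref{ojsndofkw34rtgoCIr7} and Theorem~\ref{THM1} to global statements about the cone.
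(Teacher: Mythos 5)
Your plan has a genuine gap at its central step: you cannot apply Corollary~\ref{ojsndofkw34rtgoCIr7} or Theorem~\ref{THM1} at the vertex $0$ of the cone, because their hypotheses fail there. Both results require (in different forms) that the surface sticks to the cylinder in a full ball around the base point: Corollary~\ref{ojsndofkw34rtgoCIr7} assumes $\Sigma\cap B_1\subseteq\overline{\mathcal C}$, and Theorem~\ref{THM1} assumes a \emph{nonzero} jump $\lim_{\Omega\ni x'\to x_0}u_E(x')\ne\lim_{\R^n\setminus\Omega\ni x'\to x_0}u_0(x')$. For the cone $\Sigma_\star=\partial E_\star$, the $1$-homogeneity of $u_\star$ forces $\lim_{\Omega\ni x'\to 0}u_\star(x')=0$ (the continuous trace of a $1$-homogeneous function at its vertex vanishes), so the jump at $0$ is \emph{zero}, and near $0$ the surface $\Sigma_\star$ always contains the flat piece $\{x_{n+1}=0,\ x_n<0\}\not\subseteq\overline{\mathcal C}$. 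In other words, contrary to what you set out to do (``ruling out that the stickiness height at the vertex is positive''), it is automatic that the vertex is not a stickiness point; the real obstruction is that the vertex may be a genuinely \emph{singular} point of the cone, at which $\Sigma_\star$ need not be a $C^{1,\alpha}$ hypersurface (a cone that is $C^1$ at its vertex is already a hyperplane, which is essentially the conclusion you are trying to reach). None of the $C^{1,\alpha}$-hypersurface machinery behind Lemmas~\ref{LE:31} and~\ref{CUTLE} and Corollary~\ref{ojsndofkw34rtgoCIr7} can therefore be invoked at $0$, and the propagation-by-$0$-homogeneity step that follows has nothing to propagate.

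The paper avoids the vertex entirely: it defines $\sigma_1:=\sup_{\Sigma\cap S^n\cap\{x_n>0\}}\nu_1/\nu_{n+1}$, takes a maximizing sequence $X_k$ on the unit sphere, and rescales at the projections $Y_k$ of $X_k$ onto the set $\{x_{n+1}=0\}\cap\{x_n\le0\}$, at the scale $R_k$ given by the distance of $X_k$ to that set. In the rescaled picture the point $p_k=(X_k-Y_k)/R_k$ has $|p_k|=1$; if its limit $p_\infty$ lies in $\{x_n>0\}$ one concludes by the interior minimum principle of \cite[Lemma~3.8]{MR4178752}, while if $p_\infty\in\{x_n=0\}$ the structural continuity Lemma~\ref{by601widkpw304tyhjbl} shows that $p_\infty$ is a genuine boundary \emph{discontinuity} point of the blow-up $\Sigma_\infty$, and it is only there that Corollary~\ref{ojsndofkw34rtgoCIr7} and Lemma~\ref{LE:31} become applicable, producing a contradiction with the assumption $\sigma_1>0$ via the strict sign of the exterior term $f$. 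Your fallback for the no-jump case is also problematic: invoking \eqref{2swa1l2e4oj2dn2fl3er4t5e4n2v1} is circular, since Theorem~\ref{2swa1l2e4oj2dn2fl3er4t5e4n2v2} is itself deduced from the cone triviality you are trying to prove, and the appeal to a ``classical dimension-reduction / maximum-principle argument'' for $1$-homogeneous $s$-minimal graphs with continuous zero boundary data is not a result you can quote.
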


This result was proved in~\cite[Theorem~1.5]{MR4178752}
in the case~$n=2$. 
The establishment of this result in any dimension addresses a question posed
in~\cite[Open Problem~1.7]{MR4178752}.
In a nutshell, Theorem~\ref{posjwxhe0itvnu945opmbiytv9cmir23Hnq72Gnm2yF}
states that a nonlocal minimal cone with graphical structure
in a halfspace and trivial datum in the complementary halfspace is necessarily trivial.

To prove Theorem~\ref{posjwxhe0itvnu945opmbiytv9cmir23Hnq72Gnm2yF}
(and thus Theorem~\ref{THM1})
we will rely on a structural continuity result, as described here below.

\subsection{Structural continuity at stickiness points}

We already know from~\cite{MR3516886} that nonlocal minimal graphs
are continuous up to the boundary from the interior of the domain.
We want now to obtain a bound on the modulus of continuity
at stickiness points that is ``universal'',
namely that only depends on the size of the boundary discontinuity,
$n$, and~$s$. This is important because
this information will pass to the limit through a sequence of nonlocal minimal graphs
and will ensure in Section~\ref{9sxjhwmr8c t940536uxj} a uniform detachment from the boundary in a ``renormalized'' picture
and prevent the formation of vertical boundary segments
due to limiting processes (see Figure~\ref{f8103pra3g5e7.1ascvfgasncsd}).

In this setting, the structural continuity result that we need goes as follows:

\begin{lemma}\label{by601widkpw304tyhjbl}
Let~$u_\star:\R^n\to\R$ be an $s$-minimal graph in a set~$\Omega_\star$
of class~$C^{1,1}$,
with~$0\in\partial\Omega_\star$.

Suppose that there exists~$\delta>0$ such that~$u_\star(x')=0$ for all~$x'\in\R^n\setminus\Omega_\star$
with~$|x'|<\delta$.

Suppose also that there exist~$a>0$ and a sequence of points~$p'_k\in\Omega_\star$
such that, as~$k\to+\infty$,
$$ p'_k\to 0\qquad{\mbox{and}}\qquad u_\star(p'_k)\to a.$$

Then, there exist~$\delta_0\in(0,1)$ and a modulus of continuity~$\omega_0$
(i.e., an increasing function~$\omega_0:[0,1]\to[0,1]$, continuous at~$0$ and such that~$\omega_0(0)=0$)
such that, for all~$x',y'\in\Omega_\star\cap B_{\delta_0}$, we have that
$$|u_\star(x')-u_\star(y')|\le\omega_0(|x'-y'|).$$
Here, $\delta_0$ and~$\omega_0$ depend only on~$n$, $s$, $\Omega_\star$,
$\delta$, and~$a$(and they are independent of~$u_\star$).
\end{lemma}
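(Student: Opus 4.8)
The plan is to argue by compactness and contradiction, exploiting the uniform boundary datum, the $C^{1,1}$-regularity of $\Omega_\star$, and the known interior smoothness of $s$-minimal graphs. Suppose the statement fails: then there is a sequence of $s$-minimal graphs $u_{\star,k}$ in domains $\Omega_{\star,k}$ (all of uniformly controlled $C^{1,1}$-class, with $0\in\partial\Omega_{\star,k}$), vanishing outside $\Omega_{\star,k}$ in $B_\delta$, with points $p'_k\to 0$ and $u_{\star,k}(p'_k)\to a$, but for which the modulus of continuity near $0$ degenerates: there are pairs $x'_k,y'_k\in\Omega_{\star,k}\cap B_{1/k}$ with $|x'_k-y'_k|\to 0$ yet $|u_{\star,k}(x'_k)-u_{\star,k}(y'_k)|\ge\sigma>0$. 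First I would normalize: after a rotation we may assume the $\Omega_{\star,k}$ share a common limiting half-space $\{x_n>0\}$ through $0$ (by the $C^{1,1}$ bound and Ascoli), and by the compactness theory for $s$-minimal sets (see~\cite{MR2675483,MR3516886}) the sets $E_k=\{x_{n+1}<u_{\star,k}(x')\}$ converge locally (in $L^1$ and in Hausdorff distance of boundaries, away from the fixed exterior datum) to an $s$-minimal set $E_\infty$ in the cylinder over $\{x_n>0\}$, which is again a subgraph $\{x_{n+1}<u_{\star,\infty}\}$ with $u_{\star,\infty}=0$ outside $\{x_n>0\}$ in $B_\delta$.

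The crux is to show that any such limit $u_{\star,\infty}$ is continuous at $0$ \emph{and attains there the value} $0$ — i.e., no ``instantaneous'' stickiness jump of size bounded below can survive the limit unless it is built into the datum, which here it is not since the exterior value is identically $0$. The key step is a clean continuity-up-to-the-boundary estimate for the limit: since $u_{\star,\infty}$ is uniformly continuous in $\Omega_\star$ by~\cite{MR3516886} and equals $0$ on the flat exterior portion, a barrier argument (sliding the known explicit supersolutions/subsolutions for the fractional mean curvature operator, or invoking the clean-ball / density estimates for $s$-minimal surfaces as in~\cite{MR2675483,MR3532394}) forces $\limsup_{x'\to 0}u_{\star,\infty}(x')\le 0$ from the exterior datum being zero, while the graph property and $u_{\star,\infty}\ge$ the harmonic-type lower barrier built from the exterior zero datum gives the matching lower bound only if we also use that $u_{\star,\infty}\ge 0$ is not automatic — here one must be careful: the sign of $u_{\star,\infty}$ near $0$ is controlled because the exterior datum is $0$ and minimality propagates one-sided bounds (comparison with the competitor obtained by truncating at height $0$). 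Combining these, $u_{\star,\infty}$ is continuous at $0$ with $u_{\star,\infty}(0)=0$, and moreover the convergence $u_{\star,k}\to u_{\star,\infty}$ upgrades from $L^1_{loc}$ to \emph{locally uniform} on $\overline{\Omega_\star}\cap B_{\delta_0}$ by the same barrier equicontinuity, contradicting $|u_{\star,k}(x'_k)-u_{\star,k}(y'_k)|\ge\sigma$ with $x'_k,y'_k\to 0$.

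The main obstacle I expect is precisely this \textbf{uniform barrier / equicontinuity step up to the boundary at the stickiness point}: the naive interior estimates from~\cite{MR3934589} degenerate as one approaches $\partial\Omega_\star$, and the global $C^{1,(1+s)/2}$-regularity of the surface $\Sigma$ from~\cite{MR3532394} controls $\Sigma$ as a hypersurface but not directly the function $u_\star$ (which may have vertical tangents). The resolution is to transfer the problem from $u_\star$ to the hypersurface $\Sigma_k=\partial E_k$: by~\cite{MR3532394} the $\Sigma_k$ are uniformly $C^{1,(1+s)/2}$ in $\overline{\mathcal C}$, hence precompact in $C^1_{loc}$; the hypothesis $u_{\star,k}(p'_k)\to a>0$ fixes the ``height'' of $\Sigma_k$ at an interior scale and prevents the limit surface from degenerating to a vertical wall, while the zero exterior datum pins $\Sigma_\infty$ to the plane $\{x_{n+1}=0\}$ over $\{x_n<0\}\cap B_\delta$; a connectedness/strong-maximum-principle argument for the (nonlocal) minimal surface equation then forces $\Sigma_\infty$ to meet the corner $\{x_n=0,\ x_{n+1}=0\}$ without a vertical segment, which is exactly the statement that $u_{\star,\infty}$ is continuous at $0$ with value $0$. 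Feeding the role of the role of Theorem~\ref{THM1} here — it guarantees that the trace $u_{\star,\infty}(\partial\Omega_\star)$ near $0$ is a $C^{1,\gamma}$ surface and in particular has no vertical tangent — closes the argument and yields the uniform $\delta_0,\omega_0$ by a standard ``extract a contradicting subsequence and contradict'' packaging.
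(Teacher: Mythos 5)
Your overall framework (compactness, contradiction, extraction of a limiting surface $\Sigma_\infty$, use of the uniform $C^{1,\frac{1+s}{2}}$-regularity from~\cite{MR3532394}) matches the paper, but your stated ``crux'' is incompatible with the hypotheses of the lemma. You aim to show that the limit $u_{\star,\infty}$ is continuous at $0$ with $u_{\star,\infty}(0)=0$ and that ``$\Sigma_\infty$ meets the corner $\{x_n=0,\ x_{n+1}=0\}$ without a vertical segment.'' This cannot be right: the hypothesis $u_{\star,k}(p'_k)\to a>0$ with $p'_k\to 0$ forces the limit surface to stick to the cylinder wall up to height $a$, so $\Sigma_\infty$ necessarily contains the vertical segment $\{0\}\times[0,a]$, and the interior boundary value at the origin is $a$, not $0$. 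The lemma is not about removing the stickiness jump; it is about \emph{equicontinuity of $u_\star$ inside $\Omega_\star\cap B_{\delta_0}$}, where the positive jump height $a$ is an asset rather than something to be ruled out (it keeps the relevant piece of $\Sigma$ away from the ``lower corner,'' ensuring the uniform $C^{1,(1+s)/2}$-regularity actually applies in a fixed ball around $(0,\dots,0,a)$). A barrier argument pushing $u_{\star,\infty}$ down to $0$ at the origin would contradict the given data.

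The paper's resolution is different and sharper at precisely this point: if uniform equicontinuity fails, one extracts points $\zeta'_j,\eta'_j\to 0$ with $u_j(\zeta'_j)-u_j(\eta'_j)\ge b>0$, so in the limit one obtains two points $P=(0,\sigma)$ and $Q=(0,\tau)$ on $\Sigma_\infty$ with $\sigma-\tau\ge b$, \emph{both} of which (as limits of interior points) are zeros of the nonlocal mean curvature of $E_\infty$. Because $E_\infty$ is a subgraph, the vertical translate $\widetilde E_\infty:=E_\infty+(\sigma-\tau)e_{n+1}$ contains $E_\infty$, and subtracting the nonlocal mean curvature identity at $P$ from that at $Q$ rewrites as $\int_{\widetilde E_\infty\setminus E_\infty}2\,|P-y|^{-(n+1+s)}\,dy=0$. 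This integral is strictly positive because the zero exterior datum forces $(\{|x'|<\delta\}\setminus\Omega_\star)\times(0,\sigma-\tau)$ to lie in $\widetilde E_\infty\setminus E_\infty$ — contradiction. In short, the mechanism is a vertical-sliding strong maximum principle exploiting the flat exterior datum, not continuity of the limit at the origin, and that substitution is the missing step in your plan.
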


\begin{proof} Suppose not. Then, we find~$s$-minimal graphs~$u_j$
in~$\Omega_\star$ such that~$u_j(x')=0$ for all~$x'\in\R^n\setminus\Omega_\star$
with~$|x'|<\delta$ and for which
one can find sequences of points~$p'_{j,k}\in\Omega_\star$
such that
\begin{equation}\label{GG-qpHdkprghk-ypt} \lim_{k\to+\infty}p'_{j,k}= 0\qquad{\mbox{and}}\qquad\lim_{k\to+\infty} u_j(p'_{j,k})= a,\end{equation}
and also points~$\zeta'_{j},\eta'_{j}\in\Omega_\star\cap B_{1/j}$ such that
$$ u_j(\zeta'_{j})-u_j(\eta'_{j})\ge b,$$for some~$b>0$.

By~\eqref{GG-qpHdkprghk-ypt} and~\cite{MR3532394},
we have that the surfaces~$\Sigma_j:=\{x_{n+1}=u_j(x')\}$
are of class~$C^{1,\frac{1+s}2}$,
uniformly in~$j$, in~$B_{\delta_0}(0,\dots,0,a)\cap\Omega_\star$, for some~$\delta_0\in(0,a)$,
with a graphical structure in the interior normal direction of~$\Omega_\star\times\R$.

Hence, up to a subsequence, we can suppose that~$\Sigma_j$ in~$
B_{\delta_0}(0,\dots,0,a)\cap\Omega_\star$ approaches a graphical hypersurface~$\Sigma_\infty$
in the~$C^{1,\alpha}$-sense, with~$\alpha>s$
(also, we can suppose that~$\Sigma_j$ approaches
a nonlocal minimal graph~$\Sigma_\infty$ everywhere in the measure theoretic sense
and with respect to the Hausdorff distance, see~\cite[Appendices~A and~C]{MR4104542}).

Thus, if~$P_j:=(\zeta'_j,u_j(\zeta'_{j}))$ and~$Q_j:=(\eta'_j,u_j(\eta'_{j}))$, we have that~$P_j\to P$
and~$Q_j\to Q$, with~$P$ and~$Q$ vanishing points for the nonlocal mean curvature of~$\Sigma_\infty$,
that is
$$
\int_{\R^{n+1}}\frac{\widetilde\chi_{E_\infty}(y)}{|P-y|^{n+1+s}}\,dy=0
=\int_{\R^{n+1}}\frac{\widetilde\chi_{E_\infty}(y)}{|Q-y|^{n+1+s}}\,dy,$$
where~$ E_\infty$ is the subgraph of~$\Sigma_\infty$
and
$$ \widetilde\chi_{E_\infty}(y)=\begin{cases}
1&{\mbox{ if }}y\in\R^{n+1}\setminus{E_\infty},\\
-1&{\mbox{ if }}y\in{E_\infty}.
\end{cases}$$

We observe that, by construction,~$P=(0,\sigma)\in\R^n\times\R$ and~$Q=(0,\tau)\in\R^n\times\R$
with~$\sigma-\tau\ge b$.
Therefore, setting~$\widetilde E_\infty:=E_\infty+(0,\dots,0,\sigma-\tau)$, the graphical structure entails that~$\widetilde E_\infty\supseteq E_\infty$ and
$$ \widetilde\chi_{E_\infty}(y)-\widetilde\chi_{\widetilde E_\infty}(y)=\begin{cases}
2&{\mbox{ if }}y\in\widetilde E_\infty\setminus{E_\infty},\\
0&{\mbox{ otherwise. }}
\end{cases}$$

All in all, since~$P=Q+(0,\dots,0,\sigma-\tau)$,
if~$C_\delta:=\{(x',x_n)$ s.t. $x'\in\R^n\setminus\Omega_\star$
and~$|x'|<\delta\}$,
we have that
\begin{eqnarray*}
0&=&\int_{\R^{n+1}}\frac{\widetilde\chi_{E_\infty}(y)}{|P-y|^{n+1+s}}\,dy
-\int_{\R^{n+1}}\frac{\widetilde\chi_{E_\infty}(y)}{|Q-y|^{n+1+s}}\,dy\\
&=&\int_{\R^{n+1}}\frac{\widetilde\chi_{E_\infty}(y)}{|P-y|^{n+1+s}}\,dy
-\int_{\R^{n+1}}\frac{\widetilde\chi_{\widetilde E_\infty}(y)}{|P-y|^{n+1+s}}\,dy\\&
=&\int_{\widetilde E_\infty\setminus{E_\infty}}\frac{2\,dy}{|P-y|^{n+1+s}}\\&\ge&
\int_{(\widetilde E_\infty\setminus{E_\infty})\cap C_\delta}\frac{2\,dy}{|P-y|^{n+1+s}}\\&=&
\int_{(\{ |x'|<\delta\}\setminus\Omega_\star)\times\{x_n\in(0,\sigma-\tau)\}}\frac{2\,dy}{|P-y|^{n+1+s}}\\&>&0.
\end{eqnarray*}We have thereby reached the desired contradiction.
\end{proof}

\subsection{Triviality
of nonlocal minimal cones}\label{9sxjhwmr8c t940536uxj}

We can now address the proof of
Theorem~\ref{posjwxhe0itvnu945opmbiytv9cmir23Hnq72Gnm2yF}.

\begin{proof}[Proof of Theorem~\ref{posjwxhe0itvnu945opmbiytv9cmir23Hnq72Gnm2yF}]
The desired result is proved if we show that~$\nu_i$ vanishes identically
in~$\{x_n>0\}$, for all~$i\in\{1,\dots,n\}$. In fact, by homogeneity,
it suffices to show that~$\nu_i(x)=0$ for all~$x\in \Sigma\cap S^n\cap\{x_n>0\}$
and all~$i\in\{1,\dots,n\}$.

We will focus on the first coordinate, the others being completely analogous, namely
we aim at showing that
\begin{equation}\label{9jqsdcw0gpkbmpijr893vny04pw0e-f1oled.01}
{\mbox{$\nu_1(x)=0$ for all~$x\in \Sigma\cap S^n\cap\{x_n>0\}$.}}\end{equation} We define
$$ \sigma_1:=\sup_{ \Sigma\cap S^n\cap\{x_n> 0\}}\frac{\nu_1}{\nu_{n+1}}.$$
At this level, we do not know that~$\sigma_1$ is finite
(for example, we cannot use Corollary~\ref{ojsndofkw34rtgoCIr7} at this stage,
since, to apply it, we would need
a full neighborhood where the nonlocal minimal surface sticks to the cylinder).

Nonetheless, we claim that
\begin{equation}\label{9jqsdcw0gpkbmpijr893vny04pw0e-f1oled.02}
\sigma_1\le0.
\end{equation}
Notice that, once this is proven, we conclude that~$\nu_1\le0$ in~$\Sigma\cap S^n\cap\{x_n> 0\}$.
Then, up to exchanging~$\nu_1$ with~$-\nu_1$, it also follows
that~$-\nu_1\le0$ in~$\Sigma\cap S^n\cap\{x_n> 0\}$, and therefore~\eqref{9jqsdcw0gpkbmpijr893vny04pw0e-f1oled.01} is established.

Hence, we focus on the proof of~\eqref{9jqsdcw0gpkbmpijr893vny04pw0e-f1oled.02}.
For this, we consider a sequence~$X_k\in  \Sigma\cap S^n\cap\{x_n> 0\}$ such that~$\frac{\nu_1(X_k)}{\nu_{n+1}(X_k)}\to \sigma_1$ as~$k\to+\infty$.

Up to a subsequence, we have that~$X_k\to X_\infty\in S^n$.
We also let~$R_k$ be the distance of~$X_k$ to~$\{x_{n+1}=0\}\cap\{x_n\le0\}$ and we let~$Y_k\in
\{x_{n+1}=0\}\cap\{x_n\le0\}$ be a point realizing this distance.

We write~$Y_k=(Y_k',Y_{k,n},0)\in\R^{n-1}\times\R\times\R$ and observe that, without loss of generality, we can suppose that~$Y_{k,n}=0$, because
\begin{eqnarray*}&&|(Y_k',0,0)-X_k|^2=|Y_k-X_k|^2+X_{k,n}^2
-(Y_{k,n}-X_{k,n})^2\\&&\quad=|Y_k-X_k|^2+2X_{k,n}Y_{k,n}
-Y_{k,n}^2\le|Y_k-X_k|^2.\end{eqnarray*}

Similarly, we can suppose that~$Y_k'=X_k'$, because
\begin{eqnarray*}&& |(X_k',0,0)-X_k|^2=X_{k,n}^2+X_{k,n+1}^2\\&&\quad
\le|X_k'-Y_k'|^2+
X_{k,n}^2+X_{k,n+1}^2
=
|Y_k-X_k|^2.\end{eqnarray*}

Notice also that~$|Y_k|\le|Y_k-X_k|+|X_k|\le|0-X_k|+|X_k|=2$,
so, up to a subsequence, we suppose that~$Y_k\to Y_\infty$
and~$R_k\to R_\infty\in[0,1]$.

We define~$E_k:=\frac{E-Y_k}{R_k}$
and~$\Sigma_k:=\partial E_k$.
Up to a subsequence, we know that~$E_k$
converges to a nonlocal minimal set~$E_\infty$
in~$L^1_{\rm loc}(\R^{n+1})$, see~\cite[Appendix~A]{MR4104542}
(and, while~$\Sigma_k$ is a cone for every~$k$, we do not know that~$\Sigma_\infty$
is a cone, since the base point~$Y_k$ of the dilation defining~$\Sigma_k$ varies with~$k$).
Moreover, the above local convergence holds true also
in the Hausdorff distance, see~\cite[Appendix~C]{MR4104542},
and locally in~$C^{1,\frac{1+s}2}$ in the vicinity of
points of stickiness for~$\Sigma_\infty:=\partial E_\infty$,
due to the improvement of flatness result in~\cite{MR3532394}.

We let~$p_k:=\frac{X_k-Y_k}{R_k}$ and we stress that~$|p_k|=\frac{|X_k-Y_k|}{R_k}=1$,
hence, up to a subsequence, we have that~$p_k\to p_\infty\in S^n$.

We claim that
\begin{equation}\label{X2.2ISJOLPXpaP}
p_\infty\in\{x_n>0\}.
\end{equation}
Once this is established, the proof of~\eqref{9jqsdcw0gpkbmpijr893vny04pw0e-f1oled.01}
is completed through the following argument. It follows from~\eqref{X2.2ISJOLPXpaP}
and the interior regularity for nonlocal minimal graphs (see~\cite{MR3934589})
that~$\Sigma_k$ approaches~$\Sigma$ in~$C^{1,\alpha}$ in the vicinity of~$p_\infty$,
with~$\alpha>0$. Accordingly,
denoting by~$\nu_k$ the normal to~$\Sigma_k$
and by~$\nu_\infty$ the normal to~$\Sigma_\infty$, we have that
$$ \lim_{k\to+\infty}
\frac{\nu_{k,1}(p_k)}{\nu_{k,n+1}(p_k)}=
\frac{\nu_{\infty,1}(p_\infty)}{\nu_{\infty,n+1}(p_\infty)}.$$
As a result, for all~$\ell\in\N$ and any~$z\in \Sigma_\ell\cap\{x_n> 0\}$,
\begin{equation}\label{doqjd0rti540iitkh43f0vl32fc2dcXwfvXce3}\begin{split}&\frac{\nu_{\ell,1}(z)}{\nu_{\ell,n+1}(z)}
\le\sup_{ \Sigma_\ell\cap\{x_n> 0\}}\frac{\nu_{\ell,1}}{\nu_{\ell,n+1}}
=\sup_{ \Sigma\cap\{x_n> 0\}}\frac{\nu_{1}}{\nu_{n+1}}
=\sup_{ \Sigma\cap S^n\cap\{x_n> 0\}}\frac{\nu_1}{\nu_{n+1}}\\&\quad=\sigma_1=
\lim_{k\to+\infty}\frac{\nu_1(X_k)}{\nu_{n+1}(X_k)}=
\lim_{k\to+\infty}
\frac{\nu_{k,1}(p_k)}{\nu_{k,n+1}(p_k)}=\frac{\nu_{\infty,1}(p_\infty)}{\nu_{\infty,n+1}(p_\infty)}.
\end{split}\end{equation}
Notice that the above chain of inequalities remains true if~$\sigma_1$ is infinite:
but this situation can now be ruled out, since otherwise~$\nu_{\infty,n+1}(p_\infty)=0$, with~$p_\infty$
as in~\eqref{X2.2ISJOLPXpaP}, which contradicts the graphical structure of~$E_\infty$.

Passing now~\eqref{doqjd0rti540iitkh43f0vl32fc2dcXwfvXce3} to the limit as~$\ell\to+\infty$, we conclude that, for all~$z\in \Sigma_\infty\cap\{x_n> 0\}$,
\begin{equation}\label{doqjd0rti540iitkh43f0vl32fc2dcXwfvX}
\frac{\nu_{\infty,1}(z)}{\nu_{\infty,n+1}(z)}\le\frac{\nu_{\infty,1}(p_\infty)}{\nu_{\infty,n+1}(p_\infty)}.
\end{equation}

We denote by~$u_\infty$ the function representing the graph of~$\Sigma_\infty$
and we observe that, by construction,~$u_\infty$ vanishes identically in~$\R^{n-1}\times(-\infty,0)$.
Since~$\frac{\nu_{\infty,1}}{\nu_{\infty,n+1}}$ coincides with~$-\partial_1u_\infty$, we have thus discovered in~\eqref{doqjd0rti540iitkh43f0vl32fc2dcXwfvX}
that~$\partial_1u_\infty$ attains its minimum
in~$\R^{n-1}\times(0,+\infty)$ at the point~$p_\infty$. This and~\cite[Lemma~3.8]{MR4178752}
(used here with~$\Omega:=\R^{n-1}\times(0,+\infty)$
and~${\mathcal{B}}$ a ball contained in~$\R^{n-1}\times(-\infty,-1)$) yield that~$\partial_1 u_\infty(p_\infty)\ge0$.

This analysis gives that~$\sigma_1=\frac{\nu_{\infty,1}(p_\infty)}{\nu_{\infty,n+1}(p_\infty)}=
-\partial_1 u_\infty(p_\infty)\le0$, which establishes~\eqref{9jqsdcw0gpkbmpijr893vny04pw0e-f1oled.02}
if~\eqref{X2.2ISJOLPXpaP} holds true.

Hence, to complete the proof of~\eqref{9jqsdcw0gpkbmpijr893vny04pw0e-f1oled.02},
we now dive into the proof of~\eqref{X2.2ISJOLPXpaP}. In this scenario,
we can assume that
\begin{equation}\label{xpjwcq0gibtjm9uhgvc87trds3decd}
\sigma_1\in(0,+\infty],
\end{equation}
otherwise~\eqref{9jqsdcw0gpkbmpijr893vny04pw0e-f1oled.02} holds true and we are done anyway
(and notice that, in this situation, since we do not have~\eqref{9jqsdcw0gpkbmpijr893vny04pw0e-f1oled.02},
we do not know that~$\sigma_1$ is finite to start with).

To check~\eqref{X2.2ISJOLPXpaP}, we argue for the sake of contradiction,
supposing that~$p_\infty\in\{x_n=0\}$.
Since
$$ p_k=\frac{X_k-(X_k',0,0)}{\sqrt{X_{k,n}^2+X_{k,n+1}^2}}
=\frac{(0,X_{k,n},X_{k,n+1})}{\sqrt{X_{k,n}^2+X_{k,n+1}^2}}
,$$
in this scenario we have that
$$ 0=p_{\infty,n}=\lim_{k\to+\infty}p_{k,n}=
\lim_{k\to+\infty}\frac{X_{k,n}}{\sqrt{X_{k,n}^2+X_{k,n+1}^2}}
$$
and therefore~$X_{k,n}\to0$, giving that
$$|p_{\infty,n+1}|=\lim_{k\to+\infty}|p_{k,n+1}|=
\lim_{k\to+\infty}\frac{|X_{k,n+1}|}{\sqrt{X_{k,n}^2+X_{k,n+1}^2}}=1.$$

We thus suppose that~$p_{\infty,n+1}=1$, the case~$p_{\infty,n+1}=-1$
being analogous. Therefore~$p_\infty=(0,\dots,0,1)=:e_{n+1}$.

\begin{figure}[htbp]
        \centering
         \includegraphics[width=0.45\linewidth]{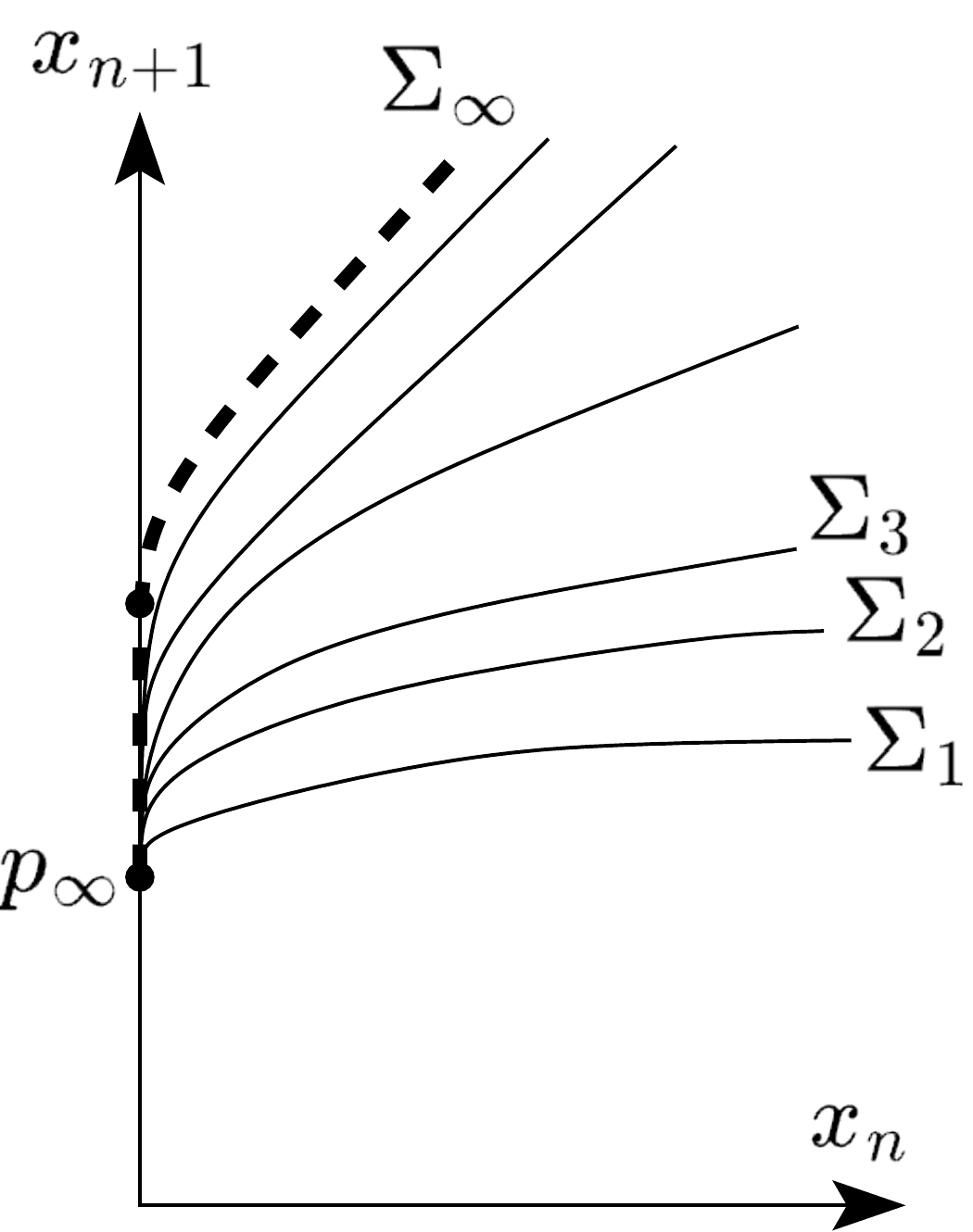}
        \caption{\footnotesize\sl Sketch illustrating what~\eqref{by601widkpw304tyhjb} wants to avoid.}
        \label{f8103pra3g5e7.1ascvfgasncsd}
    \end{figure}

We claim that\footnote{The claim in~\eqref{by601widkpw304tyhjb}
is somewhat subtle. In principle, what could
happen here is that the surface~$\Sigma_k$ might become ``more and more vertical''
near~$p_\infty$ and develop a vertical segment over~$p_\infty$ in the limit,
see Figure~\ref{f8103pra3g5e7.1ascvfgasncsd}.}
\begin{equation}\label{by601widkpw304tyhjb}
{\mbox{$p_\infty$ is an accumulation point for~$\Sigma_\infty$ from~$\{x_n>0\}$,}}\end{equation}
namely that there exists a sequence~$q_k'\in\R^{n-1}\times(0,+\infty)$ such that~$q_k'\to0$
and~$u_\infty(q_k')\to1$.

Indeed, by Lemma~\ref{by601widkpw304tyhjbl}, we know that the function~$u_k$
describing the nonlocal minimal graph~$\Sigma_k$ converges uniformly to~$u_\infty$
in~$\{|x'|\le\delta_0\}\cap\{x_n>0\}$. Hence, since~$p_{k,n+1}=u_k(p_k')\to1$
and~$p'_k\to0$, we find that
$$ 1=\lim_{k\to+\infty} u_k(p_k')=\lim_{k\to+\infty} u_\infty(p_k').$$
This proves~\eqref{by601widkpw304tyhjb}.

As a consequence of~\eqref{by601widkpw304tyhjb},
we have that
\begin{equation}\label{GH-qc0-m5Xjskmpwq:kd:877}
{\mbox{$p_\infty$ is a boundary discontinuity point for~$\Sigma_\infty$.}}\end{equation}
Thus, by~\cite{MR3532394},
we have that~$\Sigma_k$ converges to~$\Sigma_\infty$ in~$C^{1,\frac{1+s}2}$ in a neighborhood
of~$p_\infty$. As a result, as in~\eqref{doqjd0rti540iitkh43f0vl32fc2dcXwfvXce3}
and~\eqref{doqjd0rti540iitkh43f0vl32fc2dcXwfvX}, we have that,
for all~$z\in \Sigma_\infty\cap\{x_n> 0\}$,
\begin{equation}\label{cd0iksqp032tugjb230-ifwjv0vvtnh}
\frac{\nu_{\infty,1}(z)}{\nu_{\infty,n+1}(z)}\le
\lim_{ \Sigma_\infty\cap\{x_n> 0\}\ni p\to p_\infty}
\frac{\nu_{\infty,1}(p)}{\nu_{\infty,n+1}(p)}=\sigma_1.
\end{equation}

The result in~\eqref{GH-qc0-m5Xjskmpwq:kd:877} is also important because
it allows us, at this point, to use Corollary~\ref{ojsndofkw34rtgoCIr7}, and conclude that~$\sigma_1$ is finite.

We thus set~$U:=\sigma_1\,\nu_{\infty,n+1}-\nu_{\infty,1}$ and we know that~$U\ge0$
in~$\Sigma_\infty\cap\{x_n> 0\}$.
Also, due to the regularity of~$\Sigma_\infty$ in a neighborhood
of~$p_\infty$, we can take~$\rho_0\in(0,1)$ such that~$\Sigma_\infty\cap B_{2\rho_0}(p_\infty)$ is of
class~$C^{1,\frac{1+s}2}$.

We let
$${\mathcal{L}} f(x):=\int_{\Sigma_\infty\cap B_{\rho_0}(p_\infty)}\frac{ f(y)-f(x)}{|x-y|^{n+1+s}}\,d{\mathcal{H}}^n_y$$
and we claim that, in ${\Sigma_\infty\cap B_{\rho_0/2}(p_\infty)}\cap\{x_n>0\}$,
\begin{equation}\label{vdeOJSN.DJLM0w3oekfvf-023e-1}
{\mathcal{L}}U< \ell U,\end{equation}
for a bounded function~$\ell$. Indeed, in the notation of
Lemma~\ref{CUTLE}, we deduce from~\eqref{poqljf34o53i6ujp6ui-2pro345lty}
that, for all~$x\in\Sigma_\infty\cap B_{\rho_0/2}(p_\infty)\cap\{x_n>0\}$,
\begin{equation}\label{vdeOJSN.DJLM0w3oekfvf-023e-1k}
{\mathcal{L}} U(x)\le{\mathcal{L}} U(x)
+U(x)\int_{{\Sigma_\infty\cap B_{\rho_0}(p_\infty)}}\frac{1-\nu_\infty(x)\cdot\nu_\infty(y)}{|x-y|^{n+1+s}}\,d{\mathcal{H}}^n_y=a(x)U(x)+f(x),
\end{equation}
for a bounded function~$a$ and, in view of~\eqref{oqjdcmwerlbX2},
if~$\Sigma_\infty=\partial E_\infty$,
\begin{equation*}\begin{split}& f(x)=-\int_{{\partial^*E_\infty\setminus B_{\rho_0}(p_\infty)}}\frac{U(y)}{|x-y|^{n+1+s}}\,d{\mathcal{H}}^n_y\le
-\int_{{\partial^*E_\infty\cap\{x_n<-2\}}}\frac{U(y)}{|x-y|^{n+1+s}}\,d{\mathcal{H}}^n_y\\&\qquad\qquad\qquad
=-\int_{{\partial^*E_\infty\cap\{x_n<-2\}}}\frac{\sigma_1}{|x-y|^{n+1+s}}\,d{\mathcal{H}}^n_y<
0,\end{split}\end{equation*}thanks to~\eqref{xpjwcq0gibtjm9uhgvc87trds3decd}.

Plugging this information into~\eqref{vdeOJSN.DJLM0w3oekfvf-023e-1k}
we obtain~\eqref{vdeOJSN.DJLM0w3oekfvf-023e-1} with~$\ell:=a$.

We also remark that, if~$V:=\nu_{\infty,n+1}$, we have that, in~$\Sigma_\infty\cap B_{\rho_0/2}(p_\infty)\cap\{x_n>0\}$,
\begin{equation}\label{vdeOJSN.DJLM0w3oekfvf-023e-2}
{\mathcal{L}}V\ge \ell V-C_0,\end{equation}
for some~$C_0>0$.
Indeed, using Lemma~\ref{CUTLE} and the $C^{1,\frac{1+s}2}$-regularity of~$\Sigma_\infty
\cap B_{2\rho_0}(p_\infty)$, we see that, for all~$x\in\Sigma_\infty\cap B_{\rho_0/2}(p_\infty)\cap\{x_n>0\}$,
$$ {\mathcal{L}}V=a(x)V(x)+\widetilde f(x)-V(x)\int_{{\Sigma_\infty\cap B_{\rho_0}(p_\infty)}}\frac{1-\nu_\infty(x)\cdot\nu_\infty(y)}{|x-y|^{n+1+s}}\,d{\mathcal{H}}^n_y,$$
with a bounded function~$\tilde f$ and
$$\left|V(x)\int_{{\Sigma_\infty\cap B_{\rho_0}(p_\infty)}}\frac{1-\nu_\infty(x)\cdot\nu_\infty(y)}{|x-y|^{n+1+s}}\,d{\mathcal{H}}^n_y\right|\le C_1|V(x)|\le C_1,$$
thus proving \eqref{vdeOJSN.DJLM0w3oekfvf-023e-2}.

We also remark that
\begin{equation}\label{193f84fnIHSknqwoefguj04twpyhl382irougjbmg}
{\mbox{$U>0$ in~$\Sigma_\infty\cap B_{\rho_0/2}(p_\infty)\cap\{x_n>0\}$.}}
\end{equation}
Indeed, suppose by contradiction that~$U(z_0)=0$ for some~$z_0\in\Sigma_\infty\cap B_{\rho_0/2}(p_\infty)\cap\{x_n>0\}$. Then, by~\eqref{vdeOJSN.DJLM0w3oekfvf-023e-1},
$$0\le
\int_{\Sigma_\infty\cap B_{\rho_0}(p_\infty)}\frac{ U(y)}{|z_0-y|^{n+1+s}}\,d{\mathcal{H}}^n_y=
{\mathcal{L}}U(z_0)< \ell U(z_0)=0,$$
which is absurd, thus establishing~\eqref{193f84fnIHSknqwoefguj04twpyhl382irougjbmg}.

Owing to~\eqref{vdeOJSN.DJLM0w3oekfvf-023e-1}, \eqref{vdeOJSN.DJLM0w3oekfvf-023e-2}, and~\eqref{193f84fnIHSknqwoefguj04twpyhl382irougjbmg},
we can therefore employ the boundary Harnack inequality in Lemma~\ref{LE:31}
(here with~$u:=U$ and~$v:=V$) and conclude that, in a neighborhood of~$p_\infty$,
$$\nu_{\infty,n+1}=V\le CU=C \sigma_1\nu_{\infty,n+1}-C\nu_{\infty,1},$$
for some~$C>0$.

This gives that, in this neighborhood,~$\frac{\nu_{\infty,1}}{\nu_{\infty,n+1}}\le \sigma_1-\frac{1}{C}$,
but this is in contradiction with~\eqref{cd0iksqp032tugjb230-ifwjv0vvtnh}.

The proof of~\eqref{X2.2ISJOLPXpaP} is thereby complete.
\end{proof}

\begin{appendix}

\section{{F}rom the ratio of normal components to the regularity of the boundary trace}

For completeness, we present here an auxiliary result, used in the proof of Theorem~\ref{THM1}
to obtain the regularity of the trace
of the nonlocal minimal surface~$\Sigma$ along~$\partial{\mathcal{C}}$.

\begin{lemma}\label{L2sw2I2C32rA}
Let~${\mathcal{S}}$ be a hypersurface of class~$C^1$ with exterior normal~$\nu$
and suppose that~${\mathcal{S}}\cap\overline{\mathcal{C}}$ corresponds to the graph~$\{x_{n+1}=u(x')$,
$x'\in\Omega\}$, with~$u:\Omega\to\R$ uniformly continuous.

Let~$\Gamma:=\{(x',u(x'))$, $x'\in\partial\Omega\}$.

Suppose that~$p\in\Gamma$ and that~$\frac{\tau\cdot\nu}{\nu_{n+1}}\in L^q( {\mathcal{S}}\cap{\mathcal{C}}\cap B_\rho(p))$, for every vector field~$\tau$ of class~$C^{1,1}$ that is tangent
to~$\partial{\mathcal{C}}$,
for
some~$q\in[1,+\infty]$ and~$\rho>0$.

Then, $\Gamma\cap B_\rho(p)$ is of class~$W^{1,q}$.

Similarly, if~$\frac{\tau\cdot\nu}{\nu_{n+1}}\in C^{k,\alpha}( {\mathcal{S}}\cap{\mathcal{C}}\cap B_\rho(p))$, for every~$\tau$ as above and some~$k\in\{0,1\}$, $\alpha\in(0,1]$ and~$\rho>0$, then~$\Gamma\cap B_\rho(p)$ is of class~$C^{k+1,\alpha}$.
\end{lemma}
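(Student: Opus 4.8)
I will reduce the statement to a regularity statement for a single scalar function on $\partial\Omega$, and then identify the tangential derivatives of that function with the boundary values, from the interior, of the given normal ratio. Near the base point $x_0':=$ (horizontal part of $p$), after a rotation of the $x'$-coordinates write $\partial\Omega\cap B'=\{x_n=\gamma(x'')\}$ with $\Omega\cap B'=\{x_n>\gamma(x'')\}$, where $x'=(x'',x_n)$ and $\gamma$ is as regular as $\partial\Omega$ allows (in the setting of Theorem~\ref{THM1}, $\gamma\in C^{2,1}$, so that the fields $\tau^{(j)}$ below are $C^{1,1}$). By uniform continuity, $u$ extends continuously to $\overline\Omega$, and $\Gamma$ near $p$ is $\{(x'',\gamma(x''),g(x'')):x''\in B''\}$ with $g(x''):=u(x'',\gamma(x''))$; since $x''\mapsto(x'',\gamma(x''))$ is $C^{2,1}$, it suffices to prove that $g$ is of class $W^{1,q}$ (respectively $C^{k+1,\alpha}$, respectively Lipschitz) near $x_0''$.

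\textbf{The key identity.} For $j\in\{1,\dots,n-1\}$ let $\tau^{(j)}(z):=\big((e_j,\partial_j\gamma(z'')),\,0\big)\in\R^n\times\R$, with $z''=(z_1,\dots,z_{n-1})$, a vector field defined near $p$; since $\gamma\in C^{2,1}$ we have $\tau^{(j)}\in C^{1,1}$, and along $\partial\mathcal C=\{x_n=\gamma(x'')\}\times\R$ the vector $((e_j,\partial_j\gamma(x'')),0)$ is the coordinate tangent of the graph of $\gamma$ crossed with $\{0\}$, hence tangent to $\partial\mathcal C$. On $\mathcal S\cap\mathcal C$, writing $\nu=\frac{(-\nabla u,1)}{\sqrt{1+|\nabla u|^2}}$ (so $\nu_{n+1}>0$), a direct computation at the point $(x',u(x'))$, $x'=(x'',x_n)\in\Omega$, gives
$$\frac{\tau^{(j)}\cdot\nu}{\nu_{n+1}}\bigg|_{(x',u(x'))}=-\big(\partial_j u+\partial_j\gamma(x'')\,\partial_n u\big)(x')=-\partial_j v(x'',\,x_n-\gamma(x'')),$$
where $v(x'',\sigma):=u(x'',\gamma(x'')+\sigma)$ for small $\sigma>0$, so that $g(x'')=\lim_{\sigma\to0^+}v(x'',\sigma)$ uniformly in $x''$ (again by uniform continuity of $u$). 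Thus the hypothesis on $\tfrac{\tau^{(j)}\cdot\nu}{\nu_{n+1}}$ transfers, through this identity, to a bound on $\partial_j v(\cdot,\sigma)$ that is uniform up to $\sigma=0^+$: in the $L^\infty$ (respectively $C^{k,\alpha}$) case, $\partial_j v$ is bounded (respectively has bounded $C^{k,\alpha}$-norm) on $\{0<\sigma<T\}\times B''$, hence in the $C^{k,\alpha}$ case extends to a $C^{k,\alpha}$ function on $\{0\le\sigma<T\}\times B''$; in the $L^q$ case, since on the graph $d\mathcal H^n=\sqrt{1+|\nabla u|^2}\,dx'\ge dx'$ and the change of variables $x'\leftrightarrow(x'',\sigma)$ has bounded Jacobian near $p$, one gets $\int_0^T\!\!\int_{B''}|\partial_j v(x'',\sigma)|^q\,dx''\,d\sigma\le\big\|\tfrac{\tau^{(j)}\cdot\nu}{\nu_{n+1}}\big\|_{L^q(\mathcal S\cap\mathcal C\cap B_\rho(p))}^q<\infty$. (Passing from the intrinsic norm of the ratio on $\mathcal S$ to the stated norm of $\partial_j v$ uses that $\mathcal S$ is $C^1$ — in the application $C^{1,\frac{1+s}2}$ up to $\partial\mathcal C$ — together with the interior smoothness of $u$.)

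\textbf{Passage to the limit.} Let $\sigma\to0^+$. In the $C^{k,\alpha}$ case, $v(\cdot,\sigma)\to g$ uniformly on $B''$ and $\partial_j v(\cdot,\sigma)\to(\partial_j v)(\cdot,0)$ uniformly on $B''$ (the latter being uniformly continuous up to $\sigma=0$); by the standard fact that a uniform limit of $C^1$ functions whose gradients converge uniformly is $C^1$ with the limiting gradient, we get $g\in C^1(B'')$ with $\partial_j g=(\partial_j v)(\cdot,0)\in C^{k,\alpha}(B'')$, hence $g\in C^{k+1,\alpha}$. The $L^\infty$ case is identical and yields that $g$ is Lipschitz, with $\|\nabla g\|_\infty\le\sup_j\|\partial_j v\|_{L^\infty}$. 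In the $L^q$ case one argues along the same lines: $v(\cdot,\sigma)\to g$ uniformly (hence in $L^q_{\mathrm{loc}}$) and $\partial_j v(\cdot,\sigma)\to\partial_j g$ in $\mathcal D'(B'')$, while the slab bound on $\partial_j v$ — sharpened, if necessary, via $\sqrt{1+|\nabla u|^2}\gtrsim 1+|\nabla u|\gtrsim1+|\partial_j v|$ to $\partial_j v\in L^{q+1}(\{0<\sigma<T\}\times B'')$ — passes to the limit and exhibits $\nabla g$ as an $L^q$ vector field, so $g\in W^{1,q}(B'')$. After shrinking $\rho$ so that the neighbourhood used above lies in $B_\rho(p)$, this gives $\Gamma\cap B_\rho(p)\in W^{1,q}$ (respectively $C^{k+1,\alpha}$), as claimed.

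\textbf{Main obstacle.} The crux is the identity of the second step, and, behind it, the geometric point that at a stickiness point with vertical tangent the hypersurfaces $\mathcal S$ and $\partial\mathcal C$ are \emph{tangent} to each other rather than transverse, so that $\Gamma=\mathcal S\cap\partial\mathcal C$ is an intersection of mutually tangent hypersurfaces and need not be a submanifold a priori; the quantitative hypothesis on $\tfrac{\tau\cdot\nu}{\nu_{n+1}}$ is exactly what compensates for this, by controlling the only genuinely dangerous quantity, namely the tangential derivative of $u$ along $\partial\Omega$ read from the interior. A secondary technical difficulty is the comparison, uniform up to the boundary, between the intrinsic $C^{k,\alpha}$-norm on $\mathcal S$ and the $(x'',\sigma)$-norm of $\partial_j v$, which is where the $C^{1,\frac{1+s}2}$-regularity of $\Sigma$ up to $\partial\mathcal C$ and the interior smoothness of $u$ enter.
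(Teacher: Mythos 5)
Your proof is correct and follows essentially the same route as the paper's: a local chart flattening $\partial\Omega$ (you make the specific graphical choice $\Phi(x'',\sigma)=(x'',\gamma(x'')+\sigma)$, whereas the paper takes a generic $C^{2,1}$ chart), the coordinate tangent fields $\tau^{(j)}=(\partial_j\Phi,0)$, and the key identity $\frac{\tau^{(j)}\cdot\nu}{\nu_{n+1}}=-\partial_j(u\circ\Phi)$ transferring the regularity of the normal ratio to one extra tangential derivative of $u$ along $\partial\Omega$. You spell out the passage to $\sigma\to0^+$ that the paper leaves implicit; note only that the $L^q$ case in both your write-up and the paper's is a touch informal, since a slab bound on $\partial_j v$ does not by itself yield an $L^q$ trace on $\{\sigma=0\}$, but this case is not the one invoked in the proof of Theorem~\ref{THM1}.
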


\begin{proof} Up to a covering in local charts, we can suppose that~$\Omega\cap B_\rho(p)$ can be written as~$\Phi(U\times(0,1))$, for some domain~$U\subseteq\R^{n-1}$
and some injection~$\Phi:\R^{n}\to\R^n$ of class~$C^{2,1}$, with~$\Phi(U\times\{0\})=\partial\Omega$.

For every~$x\in\partial{\mathcal{C}}$, we know that~$x'\in\partial\Omega$.
Hence, for all~$j\in\{1,\dots,n\}$, we define~$\tau(x):=(\partial_j\Phi (\underline x),0)$,
where~$\underline x:=\Phi^{-1}(x')$,
and we stress that~$\tau$ is tangent
to~$\partial{\mathcal{C}}$.

Moreover, $\nu(x)=\frac{(-\nabla u(x'),1)}{\sqrt{|\nabla u(x')|^2+1}}$, for all~$x\in
{\mathcal{S}}\cap{\mathcal{C}}$ (but some care is needed when~$x\in
{\mathcal{S}}\cap\partial{\mathcal{C}}$, since the gradient of~$u$
may blow up there).

This gives that, for all~$x\in
{\mathcal{S}}\cap{\mathcal{C}}\cap B_\rho(p)$,
$$ -\frac{\tau(x)\cdot\nu(x)}{\nu_{n+1}(x)}
=\nabla u(x')\cdot\partial_j\Phi (\underline x)=\partial_j
(u\circ\Phi) (\underline x).$$
The identity above gives that the (local) regularity of~$u\circ\Phi$
is one derivative better than the (local)
regularity of~$\frac{\tau\cdot\nu}{\nu_{n+1}}$.
Since~$(\Phi,u\circ\Phi)\big|_{U\times\{0\}}$ is a parameterization for~$\Gamma$, the desired result follows.
\end{proof}

\end{appendix}

\begin{bibdiv}
\begin{biblist}

\bib{MR1438304}{article}{
   author={Bogdan, Krzysztof},
   title={The boundary Harnack principle for the fractional Laplacian},
   journal={Studia Math.},
   volume={123},
   date={1997},
   number={1},
   pages={43--80},
   issn={0039-3223},
   review={\MR{1438304}},
   doi={10.4064/sm-123-1-43-80},
}

\bib{MR2365478}{article}{
   author={Bogdan, Krzysztof},
   author={Kulczycki, Tadeusz},
   author={Kwa\'{s}nicki, Mateusz},
   title={Estimates and structure of $\alpha$-harmonic functions},
   journal={Probab. Theory Related Fields},
   volume={140},
   date={2008},
   number={3-4},
   pages={345--381},
   issn={0178-8051},
   review={\MR{2365478}},
   doi={10.1007/s00440-007-0067-0},
}

\bib{MR3271268}{article}{
   author={Bogdan, Krzysztof},
   author={Kumagai, Takashi},
   author={Kwa\'{s}nicki, Mateusz},
   title={Boundary Harnack inequality for Markov processes with jumps},
   journal={Trans. Amer. Math. Soc.},
   volume={367},
   date={2015},
   number={1},
   pages={477--517},
   issn={0002-9947},
   review={\MR{3271268}},
   doi={10.1090/S0002-9947-2014-06127-8},
}

\bib{MR3934589}{article}{
   author={Cabr\'{e}, Xavier},
   author={Cozzi, Matteo},
   title={A gradient estimate for nonlocal minimal graphs},
   journal={Duke Math. J.},
   volume={168},
   date={2019},
   number={5},
   pages={775--848},
   issn={0012-7094},
   review={\MR{3934589}},
   doi={10.1215/00127094-2018-0052},
}

\bib{MR3532394}{article}{
   author={Caffarelli, Luis},
   author={De Silva, Daniela},
   author={Savin, Ovidiu},
   title={Obstacle-type problems for minimal surfaces},
   journal={Comm. Partial Differential Equations},
   volume={41},
   date={2016},
   number={8},
   pages={1303--1323},
   issn={0360-5302},
   review={\MR{3532394}},
   doi={10.1080/03605302.2016.1192646},
}

\bib{MR3661864}{article}{
   author={Caffarelli, Luis},
   author={De Silva, Daniela},
   author={Savin, Ovidiu},
   title={The two membranes problem for different operators},
   journal={Ann. Inst. H. Poincar\'{e} C Anal. Non Lin\'{e}aire},
   volume={34},
   date={2017},
   number={4},
   pages={899--932},
   issn={0294-1449},
   review={\MR{3661864}},
   doi={10.1016/j.anihpc.2016.05.006},
}

\bib{MR2675483}{article}{
   author={Caffarelli, Luis},
   author={Roquejoffre, Jean-Michel},
   author={Savin, Ovidiu},
   title={Nonlocal minimal surfaces},
   journal={Comm. Pure Appl. Math.},
   volume={63},
   date={2010},
   number={9},
   pages={1111--1144},
   issn={0010-3640},
   review={\MR{2675483}},
   doi={10.1002/cpa.20331},
}

\bib{MR3648978}{article}{
   author={Caffarelli, Luis},
   author={Ros-Oton, Xavier},
   author={Serra, Joaquim},
   title={Obstacle problems for integro-differential operators: regularity
   of solutions and free boundaries},
   journal={Invent. Math.},
   volume={208},
   date={2017},
   number={3},
   pages={1155--1211},
   issn={0020-9910},
   review={\MR{3648978}},
   doi={10.1007/s00222-016-0703-3},
}

\bib{MR3516886}{article}{
   author={Dipierro, Serena},
   author={Savin, Ovidiu},
   author={Valdinoci, Enrico},
   title={Graph properties for nonlocal minimal surfaces},
   journal={Calc. Var. Partial Differential Equations},
   volume={55},
   date={2016},
   number={4},
   pages={Art. 86, 25},
   issn={0944-2669},
   review={\MR{3516886}},
   doi={10.1007/s00526-016-1020-9},
}

\bib{MR3596708}{article}{
   author={Dipierro, Serena},
   author={Savin, Ovidiu},
   author={Valdinoci, Enrico},
   title={Boundary behavior of nonlocal minimal surfaces},
   journal={J. Funct. Anal.},
   volume={272},
   date={2017},
   number={5},
   pages={1791--1851},
   issn={0022-1236},
   review={\MR{3596708}},
   doi={10.1016/j.jfa.2016.11.016},
}

\bib{MR4104542}{article}{
   author={Dipierro, Serena},
   author={Savin, Ovidiu},
   author={Valdinoci, Enrico},
   title={Nonlocal minimal graphs in the plane are generically sticky},
   journal={Comm. Math. Phys.},
   volume={376},
   date={2020},
   number={3},
   pages={2005--2063},
   issn={0010-3616},
   review={\MR{4104542}},
   doi={10.1007/s00220-020-03771-8},
}

\bib{MR4178752}{article}{
   author={Dipierro, Serena},
   author={Savin, Ovidiu},
   author={Valdinoci, Enrico},
   title={Boundary properties of fractional objects: flexibility of linear
   equations and rigidity of minimal graphs},
   journal={J. Reine Angew. Math.},
   volume={769},
   date={2020},
   pages={121--164},
   issn={0075-4102},
   review={\MR{4178752}},
   doi={10.1515/crelle-2019-0045},
}

\bib{MAXPLE}{article}{
author={Dipierro, Serena},
   author={Savin, Ovidiu},
   author={Valdinoci, Enrico},
   title={A strict maximum principle for nonlocal minimal surfaces},
   journal={J. Eur. Math. Soc. (JEMS), in press},
   }

\bib{MR1158660}{book}{
   author={Evans, Lawrence C.},
   author={Gariepy, Ronald F.},
   title={Measure theory and fine properties of functions},
   series={Studies in Advanced Mathematics},
   publisher={CRC Press, Boca Raton, FL},
   date={1992},
   pages={viii+268},
   isbn={0-8493-7157-0},
   review={\MR{1158660}},
}


\bib{MR3827804}{article}{
   author={Lombardini, Luca},
   title={Approximation of sets of finite fractional perimeter by smooth
   sets and comparison of local and global $s$-minimal surfaces},
   journal={Interfaces Free Bound.},
   volume={20},
   date={2018},
   number={2},
   pages={261--296},
   issn={1463-9963},
   review={\MR{3827804}},
   doi={10.4171/IFB/402},
}


\bib{MR3694738}{article}{
   author={Ros-Oton, Xavier},
   author={Serra, Joaquim},
   title={Boundary regularity estimates for nonlocal elliptic equations in
   $C^1$ and $C^{1,\alpha}$ domains},
   journal={Ann. Mat. Pura Appl. (4)},
   volume={196},
   date={2017},
   number={5},
   pages={1637--1668},
   issn={0373-3114},
   review={\MR{3694738}},
   doi={10.1007/s10231-016-0632-1},
}
	
\bib{MR4023466}{article}{
   author={Ros-Oton, Xavier},
   author={Serra, Joaquim},
   title={The boundary Harnack principle for nonlocal elliptic operators in
   non-divergence form},
   journal={Potential Anal.},
   volume={51},
   date={2019},
   number={3},
   pages={315--331},
   issn={0926-2601},
   review={\MR{4023466}},
   doi={10.1007/s11118-018-9713-7},
}

\bib{MR1719233}{article}{
   author={Song, Renming},
   author={Wu, Jang-Mei},
   title={Boundary Harnack principle for symmetric stable processes},
   journal={J. Funct. Anal.},
   volume={168},
   date={1999},
   number={2},
   pages={403--427},
   issn={0022-1236},
   review={\MR{1719233}},
   doi={10.1006/jfan.1999.3470},
}

\end{biblist}
\end{bibdiv}
\vfill

\end{document}